\numberwithin{equation}{section}
\newtheorem{theorem}{Theorem}[section]
\newtheorem{lemma}[theorem]{Lemma}
\newtheorem{corollary}[theorem]{Corollary}
\newtheorem{proposition}[theorem]{Proposition}
\theoremstyle{definition}
\newtheorem{definition}[theorem]{Definition}
\newtheorem{remark}[theorem]{Remark}
\renewcommand{\epsilon}{\varepsilon}
\newcommand{\A}{\mathcal{A}}
\newcommand{\B}{\mathcal{B}}
\newcommand{\C}{\mathbb{C}}
\newcommand{\D}{\mathbb{D}}
\newcommand{\E}{\mathcal{E}}
\newcommand{\F}{\mathcal{F}}
\renewcommand{\H}{\mathcal{H}}
\newcommand{\M}{\mathcal{M}}
\newcommand{\N}{\mathbb{N}}
\renewcommand{\P}{\mathbb{P}}
\newcommand{\R}{\mathbb{R}}
\renewcommand{\S}{\mathcal{S}}
\newcommand{\id}{\operatorname{id}}
\newcommand{\op}{{\operatorname{op}}}
\newcommand{\vN}{\operatorname{vN}}
\newcommand{\alg}{{\operatorname{alg}}}
\newcommand{\fin}{{\operatorname{fin}}}
\title[Regularity of distributions of Wigner integrals]{Regularity of distributions of Wigner integrals}
\author[T. Mai]{Tobias Mai}
\address{Universit\"{a}t des Saarlandes, FR $6.1-$Mathematik, 66123 Saarbr\"{u}cken, Germany }
\email{mai@math.uni-sb.de}
\date{\today}
\thanks{This work was supported by the ERC Advanced Grant "Non-commutative distributions in free probability" and by a grant from the DFG (SP-419-8/1).\\
The author wants to express his thanks to Roland Speicher for many inspiring discussions on the topic and his valuable comments during the preparation of this article. Moreover, the author is grateful to Yoann Dabrowski for some useful remarks at the final stage, to Mehmet Madensoy for bringing some of the references that are listed at the end to the author's attention, and to S\"oren M\"oller, together with whom the author once started to learn free stochastic calculus. Finally, the author wants to thank Ivan Nourdin and Giovanni Peccati for very interesting and fruitful discussions on this article and beyond.}
\keywords{free probability theory, Wigner integrals, free Malliavin calculus, non-commutative derivations, zero-divisors, absence of atoms}
\subjclass[2000]{46L54 (46L53, 46L57)}
\begin{document}

\begin{abstract}
Wigner integrals and the corresponding Wigner chaos were introduced by P. Biane and R. Speicher in 1998 as a non-commutative counterpart of classical Wiener-It\^o integrals and the corresponding Wiener-It\^o chaos, respectively, in free probability.

In the classical case, a famous result of I. Shigekawa states that non-trivial elements in the finite Wiener-It\^o chaos have an absolutely continuous distribution. We provide here a first contribution to such regularity questions for Wigner integrals by showing that the distribution of non-trivial elements in the finite Wigner chaos cannot have atoms. This answers a question of I. Nourdin and G. Peccati.

For doing so, we establish the notion of directional gradients in the context of the free Malliavin calculus. These directional gradients bridge between free Malliavin calculus and the theory of non-commutative derivations as initiated by D. Voiculescu and Y. Dabrowski. Methods recently invented by R. Speicher, M. Weber, and the author for treating similar questions in the case of finitely many variables are extended, such that they apply to directional gradients. This approach also excludes zero-divisors for the considered elements in the finite Wigner chaos.
\end{abstract}

\maketitle


\section{Introduction}

\noindent
In 1998, P. Biane and R. Speicher established with their seminal work \cite{BianeSpeicher1998} a non-\linebreak commutative counterpart of classical stochastic calculus and Malliavin calculus in the realm of free probability. In particular, they introduced there the so-called (multiple) Wigner integrals
$$I^S_n(f) = \int_{\R_+^n} f(t_1,\dots,t_n)\ dS_{t_1} \cdots dS_{t_n}$$
for $f\in L^2(\R_+^n)$ on $\R_+ = [0,\infty)$ as the free counterpart of the classical (multiple) Wiener-It\^o integrals \cite{Wiener1938,Ito1951,Ito1952}. Despite some clear peculiarities of these free objects, their construction proceeds to a great extend parallel to the classical case, roughly speaking by replacing the classical Brownian motion by its free relative $(S_t)_{t\geq0}$. In analogy to the classical Wiener-It\^o chaos, these Wigner integrals form the so-called Wigner chaos, which likewise enjoys many properties similar to the classical Wiener-It\^o chaos; e.g. \cite{KempNourdinPeccatiSpeicher2012}.

We point out that the increments of the free Brownian motion $(S_t)_{t\geq0}$ carry the semicircular distribution as the free equivalent of the normal distribution from classical probability theory. It might seem strange at first sight that the nomenclature of Wigner integrals refers explicitly to Wigner, although his work clearly predates the birth of free stochastic calculus. However, this simply highlights the very important fact that the semicircular distribution already appeared in Wigner's famous semicircle law and that this rather surprising connection to random matrix theory, which was later clarified by Voiculescu, marks the starting point of an extremely fruitful interaction between random matrix theory and the theory of operator algebras.

Classical Malliavin calculus has many important applications (cf. \cite{Nualart2006, Nualart2009}). In particular, it became prominent for its use in treating regularity questions in different situations, as e.g. for distributions of random variables in the Wiener-It\^o chaos. For instance, it was used by Shigekawa \cite{Shigekawa1980} for proving that any non-trivial element in the finite Wiener-It\^o chaos, i.e. any non-constant finite sum of Wiener-It\^o integrals, has a distribution which is absolutely continuous with respect to the Lebesgue measure.

In contrast, in the world of free probability, distributions of non-commutative random variables that appear in the Wigner chaos are poorly understood. The aim of this paper is a first step towards a better understanding of these distributions by answering one of the fundamental questions formulated by Nourdin and Peccati in \cite[Remark 1.6]{NourdinPeccati2013}, namely: can the distribution of any non-constant self-adjoint Wigner integral have atoms or not? We will see that the answer to this question is no in full generality. Even more, we will show that the distribution of self-adjoint elements in the finite Wigner chaos, i.e. non-commutative random variables of the form
$$I_1^S(f) + I^S_2(f_2) + \dots + I_N^S(f_N)$$
with mirror-symmetric $f_n \in L^2(\R_+^n)$ for $n=1,\dots,N$ and $f_N\neq 0$, cannot have atoms. This is the content of of our main Theorem \ref{MainThm}.

Although this result is clearly in accord with the classical result of Shigekawa \cite{Shigekawa1980}, the proof of Theorem \ref{MainThm} uses completely different methods. Shigekawa's approach is based on arguments which are specially adapted to the commutative setting. In fact, he uses Malliavin's Lemma, which is a powerful result that provides a sufficient condition for a measure on $\R^d$ to be absolutely continuous with respect to Lebesgue measure. The non-commutativity in our situation forces us therefore to follow a totally different strategy, which is inspired by recently developed methods \cite{MaiSpeicherWeber2015,Shlyakhtenko2014}.

In free probability, regularity questions of this type were successfully addressed only quite recently \cite{ShlyakhtenkoSkoufranis2013,MaiSpeicherWeber2014,Shlyakhtenko2014,MaiSpeicherWeber2015,CharlesworthShlyakhtenko2015}. Our considerations here are very much based on the paper \cite{MaiSpeicherWeber2015}, where it was shown that in a von Neumann algebra $M$, which is endowed with a faithful normal tracial state $\tau$, the distribution of any non-constant self-adjoint polynomial expression $P(X_1,\dots,X_n)$ in finitely many self-adjoint variables $X_1,\dots,X_n\in M$ does not have atoms if the so-called non-microstates free entropy dimension $\delta^\ast(X_1,\dots,X_n)$ is maximal, i.e., if it satisfies the condition $\delta^\ast(X_1,\dots,X_n)=n$.

We note that the quantity $\delta^\ast(X_1,\dots,X_n)$ has its origin among other important quantities in the work of Voiculescu. He transferred in a groundbreaking series of papers \cite{Voi-Entropy-I, Voi-Entropy-II, Voi-Entropy-III, Voi-Entropy-IV, Voi-Entropy-V, Voi-Entropy-VI} (see also the survey article \cite{Voi-Entropy-Surv}) the classical notions of entropy and Fisher information to the non-commutative world. At the base of our work are techniques from the so-called non-microstates approach presented in \cite{Voi-Entropy-V, Voi-Entropy-VI}.

Formulated in general terms, so that it can be applied in our situation, the method of \cite{MaiSpeicherWeber2015} works as follows:
\begin{enumerate}
 \item \label{item:step1} Rephrase the question of absence of atoms in more algebraic terms as a question about the absence of (certain) zero-divisors.
 \item \label{item:step2} Prove that zero-divisors survive under special operations that are built on non-commutative derivations. This means that zero-divisors for some particular non-commutative random variable induce zero-divisors for some other non-commutative random variables of ``lower degree'', where the term ``degree'' refers to the degree of the considered polynomial, or in general to some natural grading on the space of non-commutative random variables under consideration.
 \item \label{item step3} Iterate the procedure of \eqref{item:step2} until reaching a non-commutative random variable of degree zero and check that the obtained element cannot be zero under the imposed conditions on the initial non-commutative random variable. This will lead to a contradiction and hence excludes zero-divisors. 
\end{enumerate}

It might be of independent interest that Step \eqref{item:step1} establishes a very interesting relationship to the work of Linnell \cite{Linnell1991, Linnell1992, Linnell1993, Linnell1998} on analytic versions of the zero divisor conjecture, particularly in the case of the free group. In fact, we will prove the more general statement that the product of any non-commutative random variable in the finite Wigner chaos, which is non-zero, with any non-zero element from the von Neumann algebra generated by the underlying free Brownian motion cannot be zero as well.

The crucial part is Step \eqref{item:step2}, which relies in \cite{MaiSpeicherWeber2015} as well as in our considerations heavily on results of Dabrowski \cite{Dab-Gamma, Dab-free_stochastic_PDE}, concerning bounds for the non-commutative derivatives that underlie the non-microstates approach to free Fisher information and free entropy of \cite{Voi-Entropy-IV} and also for more general derivations.

In contrast to the preceding studies, which especially concern the case of finitely many variables, the underlying von Neumann algebra in the setting of Wigner integrals is generated by a free Brownian motion $(S_t)_{t\geq0}$ and therefore by an uncountable family of semicircular elements, indexed by the continuous parameter $t\geq0$. Accordingly, the role of non-commutative derivatives in \cite{MaiSpeicherWeber2014,MaiSpeicherWeber2015} is taken over here by the directional gradient operators of free Malliavin calculus. Thus, the subsequent investigations can be seen as a continuous extension of the previous work \cite{ShlyakhtenkoSkoufranis2013,MaiSpeicherWeber2014,Shlyakhtenko2014,MaiSpeicherWeber2015,CharlesworthShlyakhtenko2015}.

In \cite{MaiSpeicherWeber2014}, which is an earlier version of \cite{MaiSpeicherWeber2015}, the absence of atoms in the distribution of $P(X_1,\dots,X_n)$ for non-constant self-adjoint polynomials $P$ was first shown under the stronger assumption of finite non-microstates free Fisher information $\Phi^\ast(X_1,\dots,X_n)$. Based on these ideas, Shlyakhtenko \cite{Shlyakhtenko2014} was able to prove a significant extension, namely to the most general case of full non-microstates entropy dimension $\delta^\ast(X_1,\dots,X_n)$, by involving different techniques from \cite{ConnesShlyakhtenko2005}. However, shortly after \cite{Shlyakhtenko2014}, the authors of \cite{MaiSpeicherWeber2014} were also able to upgrade their own methods to this generality, which led to the final version \cite{MaiSpeicherWeber2015}.

Deep results of Shlyakhtenko and Skoufranis \cite{ShlyakhtenkoSkoufranis2013} characterize the possible sizes of atoms that can appear in distributions of polynomial expressions $P(X_1,\dots,X_n)$ in non-commutative random variables $X_1,\dots,X_n$, which have not necessarily non-atomic distributions, (and even more matrices $(P_{ij}(X_1,\dots,X_n))_{i,j=1}^d$ thereof) under the assumption that $X_1,\dots,X_n$ are freely independent.
Since the non-microstates free entropy is additive for freely independent variables and since in the case of a single self-adjoint variable $X$ the maximality condition $\delta^\ast(X) = 1$ holds if and only if the distribution of $X$ has no atomic part, the results from \cite{MaiSpeicherWeber2015,Shlyakhtenko2014} clearly generalize some parts of the statements given in \cite{ShlyakhtenkoSkoufranis2013}. However, the full range of regularity results presented in \cite{ShlyakhtenkoSkoufranis2013} is still out of reach in this generality, but nevertheless, one expects that indeed for most of these properties rather the maximality of the non-microstates free entropy dimension matters than the free independence of the involved variables.

We point out that certain questions concerning the non-singularity and absolute continuity of distributions were addressed recently by Charlesworth and Shlyakhtenko \cite{CharlesworthShlyakhtenko2015}, in continuation of \cite{Shlyakhtenko2014}.

The paper is organized as follows.
In Section \ref{sec:Wigner-regularity}, we state our main result Theorem \ref{MainThm} on the regularity of distributions of Wigner integrals. For reader's convenience, we recall there also the fundamental definition of a free Brownian motion and the construction of Wigner integrals, as it can be found in the seminal work \cite{BianeSpeicher1998}. This exposition of the foundations of free stochastic calculus will then be continued in Section \ref{sec:StochasticCalculus}. In particular, we will define there the main operators of free Malliavin calculus and collect some results from \cite{BianeSpeicher1998}, which will be used later on.
Section \ref{sec:Derivations} is then devoted to the theory of non-commutative derivations. At first, we will put several results from \cite{Voi-Entropy-V} and \cite{Dab-Gamma} (see also \cite{Dab-free_stochastic_PDE}) in a uniform framework. Based on this, we will then obtain a significant generalization of a result that was obtained in \cite{MaiSpeicherWeber2015}, namely Proposition \ref{prop:key-inequality}, which is at the core of Step \eqref{item:step2} and hence crucial for the proof of Theorem \ref{MainThm}.
Finally, in Section \ref{sec:ProofMainThm}, we will piece together these ingredients for the actual proof of Theorem \ref{MainThm}. For this purpose, we will introduce the notion of directional gradients. The proof itself relies then on the fact that directional gradients, which belong by definition to free Malliavin calculus as presented Section \ref{sec:StochasticCalculus}, fit also nicely into the general framework of non-commutative derivations as considered in Section \ref{sec:Derivations}. Indeed, this will allow us to follow the aforementioned strategy in the spirit of \cite{MaiSpeicherWeber2014, MaiSpeicherWeber2015}.

\tableofcontents

\section{Wigner integrals and regularity of their distributions}
\label{sec:Wigner-regularity}

In this section, we provide all basic terminology and background knowledge as far as it is needed for stating our main result, Theorem \ref{MainThm}.

First of all, we will briefly recall some very basic concepts of free probability, before we proceed by giving the definition of a free Brownian motion and by presenting the construction of free Wigner integrals as they were introduced by Biane and Speicher in \cite{BianeSpeicher1998}; see also \cite{Speicher2003} and \cite{KempNourdinPeccatiSpeicher2012}.

The introduction to free stochastic calculus will be continued later in Section \ref{sec:StochasticCalculus}.

At the beginning, a few words on tensor products are in order. Throughout the paper, tensor products are understood as tensor products over the complex numbers $\C$. Moreover, we lay down here that the purely algebraic tensor product of complex vector spaces or complex algebras will be denoted by $\odot$, whereas the more familiar symbol $\otimes$ is reserved for its ``natural'' closure in the corresponding analytic setting, as for instance for Hilbert spaces or von Neumann algebras. Since the tensor sign will appear mostly in its closed version, this convention saves us from decorating the tensor signs repeatedly with fancy tags and hence keeps the notation as simple as possible.

\subsection{Non-commutative probability spaces and distributions}

The actual amount of techniques from free probability theory that are needed explicitly below is surprisingly small. The reason is that they are mostly hidden in the quoted results from free stochastic calculus and thus the computations involving them are just outsourced to other papers. However, we prefer to give a separate introduction to the very basic concepts of free probability theory, since it supplies the right language for our considerations.

Any reader, who is interested in a more detailed introduction to free probability theory, is cordially invited to have a look at \cite{VoiculescuDykemaNica1992}, \cite{Voiculescu2000}, or \cite{NicaSpeicher2006} for instance.

At the basis of free probability are non-commutative probability spaces. A \emph{non-commutative probability space $(\A,\phi)$} consists of a unital complex algebra $\A$ and a linear functional $\phi: \A\to\C$ that satisfies $\phi(1)=1$. Referring to classical probability theory, elements of $\A$ are called \emph{non-commutative random variables} and $\phi$ is called \emph{expectation}.

This nomenclature is justified by the observation that any classical probability space $(\Omega,\Sigma,\P)$ induces by $\A = L^\infty(\Omega,\Sigma,\P)$ and $\phi(X) = \int_\Omega X(\omega) d\P(\omega)$ a standard example of a non-commutative probability space (which is actually commutative).

In the generality of this purely algebraic setting, we can already introduce the notion of free independence. Unital subalgebras $(\A_i)_{i\in I}$ of $\A$ are called \emph{freely independent} (or just \emph{free}), if for any choice of finitely many indices $i_1,\dots,i_n\in I$, $n\in\N$, satisfying $i_1\neq i_2, i_2 \neq i_3,\dots, i_{n-1} \neq i_n$, and for any choice of elements $X_k \in \A_{i_k}$ with $\phi(X_k)=0$ for $k=1,\dots,n$, the condition $\phi(X_1 \cdots X_n) = 0$ is fulfilled.
Consequently, we call non-commutative random variables $(X_i)_{i\in I}$ in $\A$ \emph{freely independent} (or just \emph{free}), if the subalgebras $(\A_i)_{i\in I}$ are freely independent, where $\A_i$ denotes for each $i\in I$ the unital subalgebra of $\A$ that is generated by $X_i$. 

Roughly speaking, free independence provides a rule to calculate mixed moments. For any tuple $(X_1,\dots,X_n)$ of non-commutative random variables in $\A$, we refer to the collection of all \emph{moments}
$$\phi(X_{i_1} X_{i_2} \cdots X_{i_k}),\qquad k\geq 0,\ 1\leq i_1,\dots,i_k \leq n$$
(including also the trivial moment $\phi(1)=1$) as the \emph{(joint) distribution $\mu_{X_1,\dots,X_n}$ of $(X_1,\dots,X_n)$}. If the non-commutative random variables $X_1,\dots,X_n$ are freely independent, then the distribution $\mu_{X_1,\dots,X_n}$ is completely determined by the single variable distributions $\mu_{X_1},\dots,\mu_{X_n}$. For the seek of completeness, we point out that one can make this relation much more explicit by using the powerful combinatorial concept of \emph{free cumulants} as it was introduced to free probability by Speicher.

It was a fundamental observation of Voiculescu that the distribution $\mu_{X+Y}$ of two freely independent non-commutative random variables $X$ and $Y$ depends only on the distributions $\mu_X$ and $\mu_Y$ of $X$ and $Y$, respectively, and not on the concrete realization of $X$ and $Y$. Consequently, he defined the \emph{free additive convolution $\boxplus$} on abstract distributions by $\mu_X \boxplus \mu_Y := \mu_{X+Y}$.

For our purposes, it is necessary to impose some additional analytic structure. If we consider a \emph{$C^\ast$-probability space $(\A,\phi)$}, i.e. a non-commutative probability space $(\A,\phi)$, where $\A$ is a unital $C^\ast$-algebra and $\phi$ a state on $\A$, then the distribution $\mu_X$ of any self-adjoint non-commutative random variable $X$ in $\A$ can be identified with the compactly supported Borel probability measure $\mu_X$ on the real line $\R$ that is uniquely determined by the condition
$$\phi(X^k) = \int_\R t^k\, d\mu_X(t) \qquad\text{for $k=0,1,2,\dots$}.$$
Accordingly, the free additive convolution $\boxplus$ gives rise to a binary operation on all (compactly supported) Borel probability measures on $\R$. 

Here, we will mainly work in the setting of tracial $W^\ast$-probability spaces. A \emph{tracial $W^\ast$-probability space $(M,\tau)$} means a non-commutative probability space $(M,\tau)$, where $M$ is a von Neumann algebra and $\tau$ a faithful normal tracial state on $M$.

If $(M_1,\tau_1)$ and $(M_2,\tau_2)$ are two tracial $W^\ast$-probability spaces, then also their von Neumann algebra tensor product $M_1 \otimes M_2$ becomes, endowed with the tensor product state $\tau_1\otimes\tau_2$, a tracial $W^\ast$-probability space.

Another construction that will be used repeatedly in the subsequent considerations are the non-commutative $L^p$-spaces. Given any tracial $W^\ast$-probability space $(M,\tau)$, we may introduce the non-commutative $L^p$-spaces $L^p(M,\tau)$ for $1\leq p \leq \infty$ as the completion of $M$ with respect to the norm $\|x\|_{L^p(M,\tau)} := \tau\big((x^\ast x)^{\frac{p}{2}}\big)^{\frac{1}{p}}$, and for $p=\infty$ simply by $L^\infty(M,\tau) := M$ where we put $\|x\|_{L^\infty(M,\tau)} := \|x\|$. Whenever it is not necessary to indicate explicitly the underlying von Neumann algebra, we will abbreviate $\|\cdot\|_p := \|\cdot\|_{L^p(M,\tau)}$.

\subsection{Free Brownian motion}

Like the classical Brownian motion in the case of Wiener-It\^o integrals, the free Brownian motion is the fundamental object in free stochastic analysis and underlies in particular the construction of Wigner integrals. Thus, we want to recall now its definition.

Note that the definition itself will reflect the important fact that the role of the normal distribution in classical probability is taken over in free probability by the semicircular distribution as its free counterpart. We will denote by $\sigma_t$ the \emph{semicircular distribution with mean $0$ and variance $t>0$}, i.e. the compactly supported probability measure $\sigma_t$ on the real line $\R$ that is given by
$$d\sigma_t(x) = \frac{1}{2\pi t} \sqrt{4t-x^2}\, 1_{[-2\sqrt{t},2\sqrt{t}]}(x)\, dx.$$
Note that $(\sigma_t)_{t\geq 0}$ forms a semi-group with respect to the free additive convolution, i.e. we have that $\sigma_s \boxplus \sigma_t = \sigma_{s+t}$ holds for all $s,t\geq 0$.

\begin{definition}
Let $(M,\tau)$ be a tracial $W^\ast$-probability space. A family $(S_t)_{t \geq 0}$ of operators in $(M,\tau)$ is called \emph{free Brownian motion}, if there exists a \emph{filtration $(M_t)_{t\geq0}$ of $M$}, i.e. a family $(M_t)_{t\geq0}$ of von Neumann subalgebras $M_t$ of $M$ with
$$M_s \subseteq M_t \qquad\text{whenever $s\leq t$},$$
such that the following conditions are satisfied:
\begin{itemize}
\item We have $S_0=0$ and $S_t = S_t^\ast \in M_t$ for all $t\geq 0$.
\item For each $t > 0$, the distribution of $S_t$ is the semicircular distribution $\sigma_t$.
\item For all $0 \leq s < t$, the distribution of $S_t - S_s$ is the semicircular distribution $\sigma_{t-s}$. 
\item For all $0 \leq s < t$, the \emph{increment} $S_t-S_s$ is free from $M_s$, which means more precisely that the unital subalgebra generated by $S_t-S_s$ is free from $M_s$.
\end{itemize}
\end{definition}

A free Brownian motion can be constructed in several ways. For instance, one construction gives the free Brownian motion as the limit of matrix-valued classical Brownian motions as the dimension tends to infinity. In contrast to this certainly appealing but rather indirect approach, we will present in Subsection \ref{subsec:freeBrownianMotion} a construction of the free Brownian motion on the full Fock space over the Hilbert space $L^2(\R_+)$ of all square-integrable functions on the positive real half-line $\R_+ := [0,\infty)$. This has the advantage that it will not only prove the existence of the free Brownian motion but it will also give an additional structure to this important object, which is in fact the starting point of free Malliavin calculus. However, for the moment, we take the existence of a free Brownian motion for granted.

\subsection{Wigner integrals}

Presuming the existence of a free Brownian motion $(S_t)_{t\geq0}$ in a $W^\ast$-probability space $(M,\tau)$ with respect to a filtration $(M_t)_{t\geq0}$ of $M$, we may introduce now (multiple) Wigner integrals integrals with respect to $(S_t)_{t\geq0}$.

\begin{definition}
Let $n\in\N$ be given. We denote by $D^n \subset \R^n_+$ the collection of all diagonals in $\R^n_+$, i.e.
$$D^n := \{(t_1,\dots,t_n)\in\R^n_+|\ \text{$t_i = t_j$ for some $1\leq i,j\leq n$ with $i\neq j$}\}.$$

The construction of the \emph{(multiple) Wigner integral $I_n^S(f)$} for any function $f\in L^2(\R^n_+)$ proceeds as follows.
\begin{itemize}
 \item For any indicator function $f=1_E$ of some set
$$E = [s_1,t_1] \times \dots \times [s_n,t_n] \subset \R^n_+$$
that satisfies $E\cap D^n =\emptyset$, we define $I^S_n(f)$ by
$$I_n^S(f) = (S_{t_1} - S_{s_1}) \cdots (S_{t_n} - S_{s_n}).$$
 \item By linearity, we extend $I^S_n$ to all \emph{off-diagonal step functions}, i.e. to all step functions
$$f = \sum^m_{j=1} a_j 1_{E_j}$$
on $\R^n_+$, where each set $E_j\subset\R^n_+$ is of the form
$$E_j = [s_{j,1},t_{j,1}] \times \dots \times [s_{j,n},t_{j,n}]$$
and satisfies $E_j\cap D^n = \emptyset$.
 \item Since off-diagonal step functions are dense in $L^2(\R^n_+)$ (an important fact, which is actually not hard to prove, but which is definitely worth to think about for a moment) and since the \emph{It\^o isometry}
$$\tau(I^S_n(f)^\ast I^S_n(g)) = \langle g,f\rangle_{L^2(\R^n_+)}$$
holds for all off-diagonal step functions $f$ and $g$, we may finally extend $I^S_n$ isometrically to $L^2(\R_+^n)$.
\end{itemize}

For given $f\in L^2(\R^n_+)$, we will write
$$I_n^S(f) = \int_{\R_+^n} f(t_1, \dots, t_n)\, dS_{t_1} \cdots dS_{t_n}.$$
\end{definition}

Note that multiple Wigner integrals $I^S_n(f)$ are for general $f\in L^2(\R^n_+)$ by definition elements of $L^2(M,\tau)$. But in fact, it turns out that $I_n^S(f)$ belongs to $M$ for each $f\in L^2(\R^n_+)$ (and actually, to be more precise, it belongs to the $C^\ast$-subalgebra of $M$ that is generated by the free Brownian motion $(S_t)_{t\geq0}$). This is an immediate consequence of the fact that off-diagonal step functions are dense in $L^2(\R^n_+)$ and of \cite[Theorem 5.3.4]{BianeSpeicher1998}, which tells us that the operator norm can be bounded by a kind of Haagerup inequality, namely
\begin{equation}\label{Haagerup-inequality}
\Big\| \int_{\R^n_+} f(t_1,\dots,t_n)\, dS_{t_1} \cdots dS_{t_n}\Big\| \leq (n+1) \|f\|_{L^2(\R^n_+)} \qquad \text{for all $f\in L^2(\R^n_+)$}.
\end{equation}

Since Wigner integrals are bounded linear operators, we are of course allowed to multiply them, and it is therefore natural to ask, whether one can describe this operation also on the level of the corresponding functions. Indeed, this turns out to be possible and it leads to a free counterpart of It\^o's formula (see, for example, \cite[Theorem 2.11]{Speicher2003}). Although this result appears in many different formulations, it always reflects the same inherent structure that shows up, roughly speaking, under multiplication. We mention here the following version, which allows us to decompose products of Wigner integrals explicitly as linear combinations of Wigner integrals.

\begin{theorem}[Biane and Speicher, 1998, \cite{BianeSpeicher1998}]\label{thm:Wigner-products}
Let $f \in L^2(\R_+^n)$ and $g \in L^2(\R_+^m)$. For any $0\leq p \leq \min\{n,m\}$, we define the \emph{$p$'th contraction of $f$ and $g$} by
\begin{align*}
f \stackrel{p}{\smallfrown} g(t_1, \dots, t_{n+m-2p}) = \int_{\R_+^p} & f(t_1, \dots, t_{n-p}, s_1, \dots, s_p)\\
&  g(s_p, \dots, s_1, t_{n-p+1}, \dots, t_{n+m-2p})\, ds_1 \dots ds_p.
\end{align*}
Then the It\^o formula
$$I_n^S(f) I_m^S(g) = \sum_{p=0}^{\min\{n,m\}} I_{n+m-2p}^S(f \stackrel{p}{\smallfrown} g)$$
holds.
\end{theorem}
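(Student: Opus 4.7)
The plan is to prove the Itô formula first on a dense subspace of off-diagonal simple tensors of indicator functions, where it reduces to a purely algebraic statement about products of increments of the free Brownian motion, and then to extend it to arbitrary $L^2$ functions by continuity.

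For the continuity step, the Haagerup-type bound \eqref{Haagerup-inequality} shows that $I_k^S \colon L^2(\R_+^k) \to M$ is bounded in operator norm for every $k$, so the left-hand side depends continuously on $(f,g)$. For the right-hand side, a direct Cauchy--Schwarz estimate on the defining integral of $f \stackrel{p}{\smallfrown} g$ yields
$$\|f \stackrel{p}{\smallfrown} g\|_{L^2(\R_+^{n+m-2p})} \leq \|f\|_{L^2(\R_+^n)} \|g\|_{L^2(\R_+^m)},$$
so each summand on the right is likewise continuous in $(f,g)$. Since off-diagonal step functions are dense in $L^2(\R_+^k)$ and both sides of the claimed formula are bilinear in $(f,g)$, it suffices to verify the identity when $f = 1_{A_1} \otimes \cdots \otimes 1_{A_n}$ and $g = 1_{B_1} \otimes \cdots \otimes 1_{B_m}$ for intervals with the $A_i$ pairwise disjoint and the $B_j$ pairwise disjoint.

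After refining $\{A_i\} \cup \{B_j\}$ to a common family $\{I_1,\dots,I_N\}$ of pairwise disjoint intervals and applying bilinearity once more, one is reduced to computing products of the form $a_{k_1} \cdots a_{k_n} \cdot a_{l_1} \cdots a_{l_m}$, where $a_k := S_{\sup I_k} - S_{\inf I_k}$, the indices $k_1,\dots,k_n$ are pairwise distinct and similarly for $l_1,\dots,l_m$. The family $\{a_1,\dots,a_N\}$ is free and each $a_k$ is semicircular of variance $\lambda(I_k)$, so I would then proceed by induction on $\min\{n,m\}$, the base case being the single-increment recursion
$$I_1^S(h) \cdot I_m^S(g) = I_{m+1}^S(h \otimes g) + I_{m-1}^S(h \stackrel{1}{\smallfrown} g).$$
In the simple-tensor setting this recursion just asserts the following: if $h = 1_I$ with $I$ disjoint from $B_1$, the contraction term vanishes and the product is a longer off-diagonal simple tensor, whereas if $I = B_1$ the free Fock space identity $\ell^\ast(1_I)\ell(1_I) = \lambda(I)\cdot 1$ produces exactly the contraction of size one.

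The cleanest way to organise the bookkeeping is via the full Fock space realisation $S_t = \ell(1_{[0,t]}) + \ell^\ast(1_{[0,t]})$ over $L^2(\R_+)$, in which $I_n^S(f)\Omega = f$ on the vacuum $\Omega$, and the commutation relation $\ell^\ast(h)\ell(h') = \langle h, h'\rangle \cdot 1$ generates precisely the sum of contractions on the right-hand side of the claimed formula. The main obstacle I expect is the combinatorial accounting in the induction: one must verify that each contraction of order $p$ arises exactly from pairing the last $p$ arguments of $f$ with the first $p$ arguments of $g$ in reverse order, and that no ``internal'' contractions within $f$ alone or within $g$ alone contribute. This is precisely where the individual off-diagonality of $f$ and of $g$ enters, since any such internal pairing would correspond to the overlap of two intervals chosen from a disjoint family and therefore vanish identically.
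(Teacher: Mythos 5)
The paper offers no proof of this statement --- it is quoted directly from Biane and Speicher \cite{BianeSpeicher1998} (see also \cite[Theorem 2.11]{Speicher2003}) --- so there is nothing internal to compare against; judged on its own, your proposal is the standard argument for this formula and is essentially sound. The continuity step is fine: \eqref{Haagerup-inequality} controls the left-hand side in operator norm, your Cauchy--Schwarz bound $\|f \stackrel{p}{\smallfrown} g\|_{L^2} \leq \|f\|_{L^2}\|g\|_{L^2}$ controls each summand on the right in $L^2(\S,\tau)$ via the It\^o isometry, and off-diagonal step functions are by construction spanned by the disjoint simple tensors you reduce to (where all functions are real-valued, so the mismatch between the bilinear contraction and the sesquilinear Fock inner product never arises). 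The single-increment recursion is the right engine, and it follows from letting $l(h)+l^\ast(h)$ act on $I_m^S(g)\Omega = g$ and using that $\Omega$ is separating. The one organisational slip is the choice of induction variable: $\min\{n,m\}$ does not work, because peeling a factor off $f$ lowers $n$ but need not lower $\min\{n,m\}$. Induct on $n$ instead: for $f=f_1\otimes f'$ with disjointly supported factors one has $I_n^S(f)=I_1^S(f_1)\,I_{n-1}^S(f')$ (the correction term $I_{n-2}^S(f_1\stackrel{1}{\smallfrown}f')$ vanishes by disjointness), the induction hypothesis expands $I_{n-1}^S(f')\,I_m^S(g)$, and a second application of the $n=1$ case yields $f_1\otimes(f'\stackrel{p}{\smallfrown}g)=f\stackrel{p}{\smallfrown}g$ together with the terms $f_1\stackrel{1}{\smallfrown}(f'\stackrel{p}{\smallfrown}g)$, which vanish for $p<n-1$ (again by disjointness of the supports of $f_1$ and $f_2$) and equal $f\stackrel{n}{\smallfrown}g$ for $p=n-1$ when $n\leq m$. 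That is precisely the combinatorial accounting you anticipated, and with it the argument closes.
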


In principle, all previously collected facts about Wigner integrals put them in the most convenient setting of non-commutative probability, such that we can already talk about their (joint) distributions in a purely combinatorial sense. However, since we work here in the regular setting of $W^\ast$-probability spaces, we also want to study distributions of Wigner integrals in a stronger analytic sense, namely as (compactly supported) probability measures. Thus, we should have a criterion on the level of integrands that allows us to guarantee that the corresponding Wigner integral is self-adjoint. This criterion is provided by mirror symmetry.

It follows immediately from the definition of Wigner integrals that
$$I_n^S(f)^\ast = I^S_n(f^\ast) \qquad\text{for all $f\in L^2(\R_+^n)$}$$
holds, where the function $f^\ast \in L^2(\R_+^n)$ is determined for any $f\in L^2(\R_+^n)$ by
$$f^\ast(t_1,t_2,\dots,t_n) = \overline{f(t_n,\dots,t_2,t_1)}$$
for Lebesgue almost all $(t_1,\dots,t_n) \in \R_+^n$. As a consequence, any $f\in L^2(\R_+^n)$ satisfying $f=f^\ast$ gives a self-adjoint Wigner integral $I_n^S(f)$. We will call such $f\in L^2(\R_+^n)$ \emph{mirror symmetric}.

\subsection{Main Theorem}

Here, we are interested in properties of the distributions of Wigner integrals
$$I^S_n(f) = \int_{\R_+^n} f(t_1,\dots,t_n)\, dS_{t_1} \cdots dS_{t_n}$$
for mirror symmetric functions $f\in L^2(\R_+^n)$, and, more generally, in distributions of finite sums of such Wigner integrals like
$$Y = I_1^S(f_1) + I_2^S(f_2) + \dots + I_N^S(f_N)$$
for some $N\in\N$ and mirror symmetric functions $f_n\in L^2(\R_+^n)$ for $n=1,\dots,N$ with $f_N \neq 0$.

Surely one of the most basic questions one can ask about distributions in general is whether their support is connected or not. Basic functional analysis yields that this question can be reformulated in more operator algebraic terms to a question about the existence of non-trivial projections in the $C^\ast$-algebra that is generated by the considered operator.
Fortunately, this translation is also helpful in our situation: As we have mentioned above, Wigner integrals are in fact elements of the $C^\ast$-algebra that is generated by the free Brownian motion $(S_t)_{t\geq0}$. Hence, by quoting a results obtained by Guionnet and Shlyakhtenko in \cite{GuionnetShlyakhtenko2009}, which excludes non-trivial projections in $C^\ast(\{S_t|\ t\geq 0\})$, we may conclude without further effort that the distribution $\mu_Y$ of any operator $Y$ as above must have connected support.

However, apart from this observation, almost nothing was known until now about regularity properties of these distributions. In particular, as it was formulated by Nourdin and Peccati in \cite[Remark 1.6]{NourdinPeccati2013}, it remained an open questions whether the distribution of Wigner integrals of mirror symmetric functions being non-zero (except, of course, in the chaos of order zero) may have atoms or not. We are going to answer this question here by showing that the distribution of any such Wigner integral of a non-zero mirror symmetric function (and even of any non-constant finite sum of such Wigner integrals) does not have atoms.

Recall that an \emph{atom} of a Borel probability measure $\mu$ on $\R$ means some $\alpha\in\R$ satisfying the condition $\mu(\{\alpha\}) \neq 0$. 

The statement of the main theorem of this paper reads as follows.

\begin{theorem}\label{MainThm}
For given $N\in\N$, we consider mirror symmetric functions $f_n\in L^2(\R_+^n)$ for $n=1,\dots,N$, where we assume that $f_N \neq 0$. Then, the distribution $\mu_Y$ of
$$Y := I_1^S(f_1) + I_2^S(f_2) + \dots + I_N^S(f_N),$$
regarded as an element in $(M,\tau)$, has no atoms.
\end{theorem}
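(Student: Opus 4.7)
The plan is to follow the three-step strategy sketched in the introduction, with the ``degree reduction'' in Step (2) implemented by the directional gradients of free Malliavin calculus to be introduced in Section \ref{sec:ProofMainThm}. The first step is short and standard: an atom of $\mu_Y$ at $\alpha \in \R$ is equivalent to the spectral projection $p = 1_{\{\alpha\}}(Y)$ being non-zero, and this projection satisfies $(Y-\alpha)p = 0$. Since $Y - \alpha$ is again a finite sum of Wigner integrals with unchanged top component $f_N$ (only the degree-zero part is affected), it suffices to exclude the existence of a non-zero $q \in M = \vN(\{S_t : t\geq 0\})$ with $Yq = 0$. I would actually prove this slightly stronger zero-divisor statement advertised in the introduction, which simultaneously rules out atoms and connects the result to Linnell's analytic zero-divisor conjecture.

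For the second step, for a direction $h \in L^2(\R_+)$ the directional gradient $\nabla_h$ is a densely defined derivation from (a core in) $M$ into $L^2(M \otimes M)$. Its action on a Wigner integral should take the form
\begin{equation*}
\nabla_h I_n^S(f) \;=\; \sum_{k=1}^n \bigl(I_{k-1}^S \otimes I_{n-k}^S\bigr)\bigl(f \star_k h\bigr),
\end{equation*}
where $f\star_k h\in L^2(\R_+^{k-1})\otimes L^2(\R_+^{n-k})$ is obtained by inserting $h$ in the $k$-th slot of $f$ and integrating. Consequently $\nabla_h Y$ lives in a natural ``tensor-product Wigner chaos'' of top total order $N-1$, whose leading coefficients are the slicings $f_N \star_k h$. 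For a generic $h \in L^2(\R_+)$ at least one of these is non-zero whenever $f_N \neq 0$.

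The heart of the argument is Step (2), powered by Proposition \ref{prop:key-inequality}. I would show that the relation $Yq = 0$ forces the existence of a non-zero partner $\tilde q \in L^2(M \otimes M)$ (built from $q$ via polar decomposition and traciality) such that $(\nabla_h Y)\,\tilde q = 0$. Applying appropriate slice maps of the form $\id \otimes \tau(\,\cdot\,b)$ and re-linearizing the resulting products of Wigner integrals via the It\^o product formula (Theorem \ref{thm:Wigner-products}), one then extracts a new zero-divisor relation $Y_1 q_1 = 0$ in which $Y_1$ is a non-constant element of the finite Wigner chaos of top order at most $N-1$ with non-vanishing leading coefficient (obtained from $f_N \star_k h$), and $q_1 \in M$ is non-zero. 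Iterating this reduction $N$ times against directions $h_1,\ldots,h_N$ chosen so that the iterated contraction against $h_1\otimes\cdots\otimes h_N$ does not annihilate $f_N$ (possible precisely because $f_N \neq 0$) leaves at the end a non-zero scalar multiple of $1$ annihilating a non-zero element of $M$, the desired contradiction.

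The main obstacle is of course establishing Proposition \ref{prop:key-inequality} in a form applicable here, and verifying that the directional gradients $\nabla_h$ fit into the framework of non-commutative derivations developed in Section \ref{sec:Derivations}. Concretely this demands closability of $\nabla_h$ on a suitable core in $M$, the correct identification of its adjoint $\nabla_h^\ast$ (essentially the free Skorokhod/Hitsuda-type integral of \cite{BianeSpeicher1998}), and Dabrowski-style bounds \cite{Dab-Gamma, Dab-free_stochastic_PDE} on how $\nabla_h$ and its iterates interact with the von Neumann algebra norm, which are needed to handle the tensor-product error terms produced by the Leibniz rule. A secondary subtlety is to preserve the non-vanishing of the partner element through each of the $N$ iterations, which will rest on the faithfulness of $\tau \otimes \tau$ on $M \otimes M$ together with careful bookkeeping of which tensor leg carries the zero-divisor at each stage.
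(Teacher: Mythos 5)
Your plan follows essentially the same route as the paper: reduce atoms to zero-divisors via spectral projections, realize the directional gradients $\nabla^h$ as real, coassociative non-commutative derivations with $1\otimes 1$ in the domain of the adjoint so that Proposition \ref{prop:key-inequality} propagates the two-sided annihilation $Yu=0$, $Y^\ast v=0$ (with $v$ obtained from $u$ exactly as you indicate, via equivalence of support projections) to $v^\ast\cdot\overline{\nabla}^h Y\cdot u=0$, then slice with $\tau$ against a projection to drop the top chaos order by one, and iterate $N$ times until only the scalar $\tau(p_1)\cdots\tau(p_N)\langle f_N,h_1\otimes\cdots\otimes h_N\rangle$ survives, forcing $f_N=0$. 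The only cosmetic differences are that the paper keeps the same $u$ on one tensor leg throughout and lets $h_1,\dots,h_N$ range over all elementary tensors at the end (rather than choosing a single generic direction with non-vanishing contraction), and it computes the slice maps directly from the chaos decomposition rather than via the It\^o product formula; neither affects correctness.
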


The proof of Theorem \ref{MainThm} will be given in Section \ref{sec:ProofMainThm}. We stress that the above statement clearly stays valid if we add to $Y$ a constant multiple of the identity. In fact, this will be a direct outcome of the proof of Theorem \ref{MainThm}, since we will use the chaos decomposition to deal with such shifts in a uniform way. More precisely, we can just encode constant multiples of the identity by the chaos of order zero.

Furthermore, we point out that Theorem \ref{MainThm} corresponds nicely to a classical result of Shigekawa \cite{Shigekawa1978, Shigekawa1980} (although its proof uses completely different methods for which there are by now no free analogues), which states that any non-trivial finite sum of Wiener-It\^o integrals has an absolutely continuous distribution, and hence cannot have atoms. Thus, confident of the far reaching parallelism between classical and free probability, we are tempted to conjecture in accordance with \cite{Speicher2013} that the analogy between Wiener-It\^o integrals and Wigner integrals goes even further, namely that any $Y$ like in Theorem \ref{MainThm} has in fact an absolutely continuous distribution. We leave this question to further investigations.

\section{Free stochastic calculus} 
\label{sec:StochasticCalculus}

One of the main pillars on which the proof of Theorem \ref{MainThm} rests is free stochastic calculus as it was introduced by Biane and Speicher in \cite{BianeSpeicher1998}. For readers convenience, we recall in this section the basic definitions and some results of this theory as far as necessary.

First of all, we will introduce the notion of biprocesses. Secondly, we will describe the concrete realization of the free Brownian motion on the full Fock space over $L^2(\R_+)$. This additional structure will finally allow us to introduce the basic operators of Malliavin calculus.

\subsection{Biprocesses}

We broach now the theory of biprocesses. Our exposition here heavily relies on \cite{BianeSpeicher1998}, \cite{Speicher2003}, and \cite{KempNourdinPeccatiSpeicher2012}.

Let us first introduce a few general notions. We denote by $\E(\R_+)$ the space of all complex valued functions $f$ on $\R_+$, which can be written as a finite sum
$$f = \sum^n_{j=1} a_j\, 1_{E_j}$$
for some intervals $E_1,\dots,E_n\subseteq\R_+$ of the form $E_j = [s_j,t_j)$ with $0\leq s_j < t_j < \infty$ for $j=1,\dots,n$ and complex numbers $a_1,\dots,a_n\in\C$. As usually, $1_E$ denotes the indicator function of a subset $E\subseteq\R_+$. It is easy to see that $\E(\R_+)$ is in fact a complex algebra.

For any unital complex algebra $\A$, the algebraic tensor product $\E(\R_+,\A) := \E(\R_+)\odot\A$ consists of all functions $f$ defined on $\R_+$ and taking values in $\A$, which can be written as
$$f = \sum^n_{j=1} A_j\, 1_{E_j}$$
for some intervals $E_1,\dots,E_n\subseteq\R_+$ of the form $E_j = [s_j,t_j)$ with $0\leq s_j < t_j < \infty$ for $j=1,\dots,n$ and elements $A_1,\dots,A_n\in\A$.

\subsubsection{Definition of biprocesses}

We are prepared now to define biprocesses. For the remaining part of this subsection, we fix a tracial $W^\ast$-probability space $(M,\tau)$ for which a filtration $(M_t)_{t\geq0}$ exists.

\begin{definition}
We distinguish several types of biprocesses, which are build on each other. Their definition proceeds as follows:
\begin{itemize}
 \item[(i)] The elements $$U:\ \R_+\to M \odot M,\ t\mapsto U_t$$ of $\E(\R_+,M \odot M)$ are called \emph{simple biprocesses}.
 \item[(ii)] A simple biprocess $U: \R_+ \to M \odot M$ is called \emph{adapted}, if the condition $U_t\in M_t \odot M_t$ is satisfied for all $t\geq0$. The set of all adapted simple biprocesses will be denoted by $\E^a(\R_+,M\odot M)$.
 \item[(iii)] We denote by $\B_p$ for $1\leq p \leq \infty$ the completion of $\E(\R_+,M \odot M)$, with respect to the norm $\|\cdot\|_{\B_p}$, which is given by $$\|U\|_{\B_p} := \bigg(\int_{\R_+} \|U_t\|^2_{L^p(M\otimes M,\tau\otimes\tau)}\, dt\bigg)^{\frac{1}{2}}.$$ An element of $\B_p$ is called an \emph{$L^p$-biprocess}.
 \item[(iv)] For $1\leq p \leq \infty$, the closure of $\E^a(\R_+, M\odot M)$ with respect to $\|\cdot\|_{\B_p}$ will be denoted by $\B^a_p$. Elements of $\B^a_p$ are called \emph{adapted $L^p$-biprocesses}.
\end{itemize}
\end{definition}

\subsubsection{Integration of biprocesses}

For our purposes, the integration theory of biprocesses is of great importance. We focus here first on the integration of $L^p$-biprocesses with respect to functions in $L^2(\R_+)$.

On the basic level of simple biprocesses, such integrals can be introduced quite easily: if $U$ is any simple biprocess, we may write
\begin{equation}\label{eq:simple-biprocess-repr} 
U = \sum^n_{j=1} U^{(j)}\, 1_{E_j}
\end{equation}
for some intervals $E_1,\dots,E_n\subseteq\R_+$ of the form $E_j = [s_j,t_j)$ with $0\leq s_j < t_j < \infty$ for $j=1,\dots,n$ and certain elements $U^{(1)},\dots,U^{(n)}\in M\odot M$. Then, we put
$$\int_{\R_+} U_t\, \overline{h(t)}\, dt := \sum^n_{j=1} \langle 1_{E_j}, h\rangle_{L^2(\R_+)}\, U^{(j)},$$
and it is easy to see that the value of this integral does not depend on the concrete choice of the representation \eqref{eq:simple-biprocess-repr}.

Sometimes, it is more appropriate to write a given simple biprocess $U$ in \emph{standard form}, i.e. in the form of \eqref{eq:simple-biprocess-repr}, where the intervals $E_1,\dots,E_n\subseteq\R_+$ are assumed to be pairwise disjoint.

By the construction presented above, we obtain a sesqui-linear pairing
$$\langle\cdot,\cdot\rangle:\ \E(\R_+, M \odot M) \times L^2(\R_+) \to M \odot M,$$
which is given by
$$\langle U,h \rangle := \int_{\R_+} U_t\, \overline{h(t)}\, dt$$
for any $U\in\E(\R_+,M\odot M)$ and $h\in L^2(\R_+)$. 

Since we want to extend $\langle\cdot,\cdot\rangle$ to a sesqui-linear paring between $\B_p$ and $L^2(\R_+)$, we need to study its continuity with respect to $\|\cdot\|_{\B_p}$. This will be done in the following lemma. In the case $p=\infty$, this property of $\langle\cdot,\cdot\rangle$ was already mentioned in \cite{BianeSpeicher1998}. The general case is probably also well-known to experts, but for the seek of completeness, we include here the straightforward proof.

\begin{lemma}\label{lem:biprocess-pairing}
Let $1\leq p \leq \infty$ be given. For any $U\in\E(\R_+,M\odot M)$ and $h\in L^2(\R_+)$, it holds true that
$$\|\langle U,h \rangle\|_{L^p(M\otimes M,\tau\otimes\tau)} \leq \|U\|_{\B_p} \|h\|_{L^2(\R_+)}.$$
\end{lemma}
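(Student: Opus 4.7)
The plan is to reduce to the standard form of a simple biprocess and then apply Cauchy–Schwarz twice: once at the level of functions on $\R_+$, and once at the level of the sum indexing the simple biprocess.

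Concretely, I would first write $U$ in standard form as $U = \sum_{j=1}^n U^{(j)}\, 1_{E_j}$ with pairwise disjoint intervals $E_j = [s_j,t_j)$ of lengths $\ell_j = t_j - s_j$, and elements $U^{(j)} \in M \odot M$. By the definition of the pairing and the triangle inequality in $L^p(M\otimes M, \tau\otimes\tau)$, we have
\[
\|\langle U, h\rangle\|_p \leq \sum_{j=1}^n |\langle 1_{E_j}, h\rangle_{L^2(\R_+)}| \cdot \|U^{(j)}\|_p.
\]
Cauchy–Schwarz in $L^2(\R_+)$ applied to $1_{E_j}$ and $h\,1_{E_j}$ gives $|\langle 1_{E_j}, h\rangle_{L^2(\R_+)}| \leq \ell_j^{1/2}\, \|h\, 1_{E_j}\|_{L^2(\R_+)}$.

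Substituting this estimate and then applying Cauchy–Schwarz on the finite index sum, I obtain
\[
\|\langle U, h\rangle\|_p \leq \sum_{j=1}^n \bigl(\ell_j^{1/2}\, \|U^{(j)}\|_p\bigr)\, \|h\, 1_{E_j}\|_{L^2(\R_+)} \leq \Bigl(\sum_{j=1}^n \ell_j \|U^{(j)}\|_p^{2}\Bigr)^{\!1/2}\Bigl(\sum_{j=1}^n \|h\, 1_{E_j}\|_{L^2(\R_+)}^{2}\Bigr)^{\!1/2}.
\]
Now the crucial bookkeeping: since the $E_j$ are pairwise disjoint, $\|U_t\|_p$ is constant equal to $\|U^{(j)}\|_p$ on each $E_j$, and zero elsewhere, so $\sum_j \ell_j \|U^{(j)}\|_p^2 = \int_{\R_+} \|U_t\|_p^2\, dt = \|U\|_{\B_p}^2$. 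Likewise, disjointness yields $\sum_j \|h\,1_{E_j}\|_{L^2(\R_+)}^2 \leq \|h\|_{L^2(\R_+)}^2$. Combining these two identities gives the claimed inequality.

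The only substantive obstacle I anticipate is ensuring that the representation in standard form is legitimate and that the bound is insensitive to the chosen representation; this is handled by the observation already made in the paper that $\langle U, h\rangle$ is independent of the representation of $U$, together with the standard fact that any simple biprocess can be rewritten with pairwise disjoint supporting intervals by refining the original partition. Once this is in place, the argument is a routine double Cauchy–Schwarz and needs no further input.
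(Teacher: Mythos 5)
Your argument is correct and coincides with the paper's own proof: both pass to the standard form with pairwise disjoint intervals, apply Cauchy--Schwarz once to get $|\langle 1_{E_j},h\rangle| \leq \|1_{E_j}\|_{L^2(\R_+)}\|1_{E_j}h\|_{L^2(\R_+)}$, then Cauchy--Schwarz on the finite sum, and finally use disjointness to identify $\sum_j \lambda^1(E_j)\|U^{(j)}\|_p^2$ with $\|U\|_{\B_p}^2$ and to bound $\sum_j\|1_{E_j}h\|_{L^2(\R_+)}^2$ by $\|h\|_{L^2(\R_+)}^2$. No gaps.
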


\begin{proof}
Let $U\in\E(\R_+,M\odot M)$ and $h\in L^2(\R_+)$ be given and write $U$ in standard form
$$U = \sum^n_{j=1} U^{(j)}\, 1_{E_j}.$$
For any fixed $1\leq p \leq \infty$, we may check that
$$\|U\|_{\B_p} = \bigg(\int_{\R_+} \|U_t\|^2_{L^p(M\otimes M,\tau\otimes\tau)}, dt\bigg)^{\frac{1}{2}} = \bigg(\sum^n_{j=1} \lambda^1(E_j) \|U^{(j)}\|^2_{L^p(M\otimes M,\tau\otimes\tau)}\bigg)^{\frac{1}{2}},$$
where $\lambda^1$ denotes the Lebesgue measure on $\R$. Thus, applying twice the Cauchy-Schwarz inequality yields as desired
\begin{align*}
\|\langle U,h\rangle&\|_{L^p(M\otimes M,\tau\otimes\tau)}\\
&\leq \sum^n_{j=1} |\langle 1_{E_j}, h\rangle_{L^2(\R_+)}|\, \|U^{(j)}\|_{L^p(M\otimes M,\tau\otimes\tau)}\\
&= \sum^n_{j=1} |\langle 1_{E_j}, 1_{E_j} h\rangle_{L^2(\R_+)}|\, \|U^{(j)}\|_{L^p(M\otimes M,\tau\otimes\tau)}\\
&\leq \sum^n_{j=1} \|1_{E_j} h\|_{L^2(\R_+)} \|1_{E_j}\|_{L^2(\R_+)} \|U^{(j)}\|_{L^p(M\otimes M,\tau\otimes\tau)}\\
&\leq \bigg(\sum^n_{j=1} \|1_{E_j} h\|_{L^2(\R_+)}^2\bigg)^{\frac{1}{2}} \bigg(\sum^n_{j=1} \|1_{E_j}\|_{L^2(\R_+)}^2 \|U^{(j)}\|_{L^p(M\otimes M,\tau\otimes\tau)}^2\bigg)^{\frac{1}{2}}\\
&\leq \|h\|_{L^2(\R_+)} \|U\|_{\B_p},                               
\end{align*}
where we used in addition that due to the pairwise orthogonality of the functions $\{1_{E_j} h|\ j=1,\dots,n\}$
$$\bigg(\sum^n_{j=1} \|1_{E_j} h\|_{L^2(\R_+)}^2\bigg)^{\frac{1}{2}} \leq \|h\|_{L^2(\R_+)}$$
holds and that we have $\|1_E\|_{L^2(\R_+)}^2 = \lambda^1(E)$ for any Borel set $E\subseteq \R_+$ with finite Lebesgue measure.
\end{proof}

Due to the inequality that we have established in Lemma \ref{lem:biprocess-pairing}, the definition of $\langle\cdot,\cdot\rangle$ extends now naturally to $\B_p$.

\begin{definition}\label{def:biprocess-pairing}
For any $1\leq p \leq \infty$, the sesqui-linear pairing
$$\langle\cdot,\cdot\rangle:\ \E(\R_+, M \odot M) \times L^2(\R_+) \to M \odot M,\qquad \langle U,h\rangle = \int_{\R_+} U_t \overline{h(t)}\, dt,$$
extends continuously according to
$$\|\langle U,h \rangle\|_{L^p(M\otimes M,\tau\otimes\tau)} \leq \|U\|_{\B_p} \|h\|_{L^2(\R_+)}.$$
to a sesqui-linear pairing
$$\langle\cdot,\cdot\rangle:\ \B_p \times L^2(\R_+) \to L^p(M \otimes M, \tau\otimes\tau).$$
\end{definition}

\subsubsection{Stochastic integrals of biprocesses}

Next, we are going to define stochastic integrals $\int_{\R_+} U_t\sharp dS_t$ of biprocesses $U$ with respect to the free Brownian motion $(S_t)_{t\geq0}$. For this purpose, we first have to introduce the notation $\sharp$, which appears here and repeatedly in the non-commutative setting.

\begin{remark}\label{rem:sharp}
Let $\A$ and $\B$ be complex algebras. If $\M$ is an $\A$-$\B$-bimodule, then we denote by $\sharp$ the operation $(\A \odot \B) \times \M \to \M$ that is determined by linear extension of $(a \otimes b) \sharp m := a \cdot m \cdot b$. Even more, if we would replace here $\B$ by its opposite algebra $\B^\op$, then $\sharp$ would give rise to a left action of the algebraic tensor product $\A \odot \B^\op$ on $\M$. But since the multiplicative structure of $\A \odot \B$ will play a minor role in our considerations, we will not care about this subtlety in the following.
\end{remark}

\begin{definition}
Let $(S_t)_{t\geq0}$ be a free Brownian motion in $M$ with respect to its given filtration $(M_t)_{t\geq0}$.
\begin{itemize}
 \item For any simple biprocess $U\in\E(\R_+,M\odot M)$, we define $$\int_{\R_+} U_t \sharp dS_t := \sum^n_{j=1} U^{(j)} \sharp (S_{t_j} - S_{s_j}) = \sum^n_{j=1} \sum^{m_j}_{i=1} A_i^{(j)} (S_{t_j} - S_{s_j}) B_i^{(j)},$$
where $U$ is written in the form \eqref{eq:simple-biprocess-repr} for intervals $E_j = [s_j,t_j)$ with $0 \leq s_j < t_j < \infty$ and elements $U^{(j)}\in M \odot M$ of the form $U^{(j)} = \sum^{m_j}_{i=1} A_i^{(j)} \otimes B_i^{(j)}$ for $j=1,\dots,n$.
 \item If $U,V \in\E^a(\R_+, M \odot M)$ are simple adapted biprocesses, then the \emph{general Wigner-It\^o isometry} (cf. \cite[Proposition 2.7]{Speicher2003}) tells us that $$\langle \int_{\R_+} U_t\sharp dS_t, \int_{\R_+} V_t\sharp dS_t \rangle = \int_{\R_+} \langle U_t, V_t\rangle\, dt =: \langle U, V\rangle_{\B_2}$$ holds. Thus, we have in particular that $$\Big\|\int_{\R_+} U_t\sharp dS_t\Big\|_2 = \|U\|_{\B_2}$$ for all $U\in \E^a(\R_+, M\odot M)$. Therefore, the integral $\int_{\R_+} U_t\sharp dS_t$ extends from simple adapted biprocesses to any adapted $L^2$-biprocess $U\in\B_2^a$ in such a way that the induced mapping
$$U \mapsto \int_{\R_+} U_t\sharp dS_t$$ is isometric from $\B_2^a$ to $L^2(M,\tau)$.
\end{itemize}
\end{definition}

\subsection{The free Brownian motion on the full Fock space}\label{subsec:freeBrownianMotion}

We come back now to the construction of the free Brownian motion. As we announced earlier, we will do this here in an explicit way on the full Fock space over $L^2(\R_+)$. These techniques will be used to build up free Malliavin calculus, in the same way as classical Malliavin calculus is built on the symmetric Fock space.

\subsubsection{The full Fock space and field operators}

We first recall the construction of the full Fock space over an arbitrary complex Hilbert space.

Recall that in the context of complex Hilbert spaces, the symbol $\odot$ stands for the algebraic tensor product (over the complex numbers $\C$), whereas its completion with respect to the canonical inner product will be denoted by $\otimes$.

\begin{definition}
Let $(H,\langle\cdot,\cdot\rangle_H)$ be a complex Hilbert space. We define the \emph{full Fock space} $\F(H)$ associated to $H$ as the complex Hilbert space that is given by
$$\F(H) := \bigoplus^\infty_{n=0} H^{\otimes n},$$
where $\bigoplus$ is understood as Hilbert space operation. Therein, we declare that $H^{\otimes 0} := \C \Omega$ for some fixed vector $\Omega$ of norm $1$, which we call the \emph{vacuum vector} of $\F(H)$.

More explicitly, the inner product $\langle\cdot,\cdot\rangle$ on $\F(H)$ is determined by the following rules: We have
$$\langle g_1 \otimes \dots \otimes g_m, h_1 \otimes \dots \otimes h_n\rangle = 0 \qquad \text{if $m\neq n$}$$
and in the case $m=n$
$$\langle g_1 \otimes \dots \otimes g_m, h_1 \otimes \dots \otimes h_m\rangle = \langle g_1,h_1 \rangle_H \cdots \langle g_m,h_m \rangle_H.$$
\end{definition}

Later on, we will also work with some special (non-closed) subspaces of the full Fock space $\F(H)$, involving an infinite but algebraic direct sum, namely
\begin{itemize}
 \item $\displaystyle{\F_\alg(H) := \sideset{}{_\alg}\bigoplus_{n=0}^\infty H^{\odot n}}$, i.e. the subspace of $\F(H)$ that consists of finite sums of tensor products of vectors in $H$, and
 \item $\displaystyle{\F_\fin(H) := \sideset{}{_\alg}\bigoplus_{n=0}^\infty H^{\otimes n}}$, i.e. the subspace of $\F(H)$ that consists of finite sums of elements in the Hilbert spaces $H^{\otimes n}$.
\end{itemize}

It is clear by definition that we have the inclusions $\F_\alg(H) \subseteq \F_\fin(H) \subseteq \F(H)$ and that both subspaces $\F_\alg(H)$ and $\F_\fin(H)$ are dense in $\F(H)$.

On the full Fock space $\F(H)$, we may introduce the so-called field operators. In the case $H=L^2(\R_+)$, these operators will provide the desired realization of the free Brownian motion.

\begin{definition}
Let $(H,\langle\cdot,\cdot\rangle_H)$ be a complex Hilbert space. For each $h\in H$ we introduce the following operators on the full Fock space $\F(H)$ over $H$:
\begin{itemize}
 \item[(i)] The \emph{creation operator} $l(h) \in B(\F(H))$ is determined by
            \begin{eqnarray*}
            l(h)\, h_1 \otimes \dots \otimes h_n &=& h \otimes h_1 \otimes \dots \otimes h_n,\\
            l(h)\, \Omega &=& h.
            \end{eqnarray*}
 \item[(ii)] The \emph{annihilation operator} $l^\ast(h) \in B(\F(H))$ is given by
            \begin{eqnarray*}
            l^\ast(h)\, h_1 \otimes \dots \otimes h_n &=& \langle h,h_1\rangle_H h_2 \otimes \dots \otimes h_n,\qquad n\geq 2,\\
            l^\ast(h)\, h_1 &=& \langle h, h_1\rangle_H \Omega,\\
            l^\ast(h)\, \Omega &=& 0.
            \end{eqnarray*}
 \item[(iii)] The \emph{field operator} $X(h) \in B(\F(H))$ is defined by $$X(h) := l(h) + l^\ast(h).$$
\end{itemize}
\end{definition}

An easy calculation shows that we have $l^\ast(h)=l(h)^\ast$ for all $h\in H$, as the notation suggests. As an immediate consequence, $X(h) = X(h)^\ast$ holds for each $h\in H$.

In order to obtain a $W^\ast$-probability space, in which the free Brownian lives, it is natural to consider the von Neumann algebra generated by field operators $X(h)$ for a sufficiently large family of vectors $h$. As it turns out, the right choice for this purpose are the ``real'' vectors $h$. More formally, we will consider the full Fock space over the \emph{complexification} $H_\C = H \oplus i H$ of any real Hilbert space $(H,\langle\cdot,\cdot\rangle_H)$. The ``real'' vectors are then naturally those, which are coming from $H$. We shall make this more precise with the following definition.

\begin{definition}
Let $H$ be a real Hilbert space and denote by $H_\C = H \oplus i H$ its complexification. We define the von Neumann algebra $\S(H) \subseteq B(\F(H_\C))$ by
$$\S(H) = \vN\big(\{X(h)|\ h \in H\}\big).$$
We may endow $\S(H)$ with the \emph{vacuum expectation} $\tau: \S(H) \rightarrow \C$ given by
$$\tau(X) = \langle X\Omega,\Omega \rangle.$$
\end{definition}

Due to the fact that $H$ is a real Hilbert space, we are in the nice situation that $\tau$ gives a faithful normal tracial state on $\S(H)$. Thus, we have obtained a $W^\ast$-probability space $(\S(H),\tau)$.

Later on, we will also use the unital $\ast$-algebra $\S_\alg(H)$ that is given by
$$\S_\alg(H) : = \alg\big(\{X(h)|\ h \in H\}\big).$$
Clearly, $\S_\alg(H) \subseteq \S(H) \subseteq B(\F(H_\C))$.

It is a very nice feature of $(\S(H),\tau)$ that its $L^2$-space $L^2(\S(H),\tau)$ can be identified in a natural way with the corresponding full Fock space $\F(H_\C)$. This important observation is at the base of free Malliavin calculus.

Since we have for all $X_1,X_2\in\S(H)$ that
$$\langle X_1,X_2\rangle_{L^2(\S(H),\tau)} = \tau(X_2^\ast X_1) = \langle (X_2^\ast X_1) \Omega,\Omega\rangle_{\F(H)} = \langle X_1 \Omega, X_2 \Omega\rangle_{\F(H)},$$
we see that the map
$$\Phi_0:\ \S(H) \to \F(H_\C),\ X \mapsto X\Omega$$
admits an isometric extension
$$\Phi:\ L^2(\S(H),\tau) \to \F(H_\C).$$
The following lemma allows us to conclude that $\Phi$ is even more surjective and hence gives the desired isometric isomorphism between $L^2(\S(H),\tau)$ and $\F(H_\C)$. A proof can be found in \cite[Section 5.1]{BianeSpeicher1998}.

\begin{lemma}\label{lem:Wick-product}
Given $h_1, \dots, h_n \in H_\C$, then  there exists a unique operator
$$W(h_1 \otimes \dots \otimes h_n) \in \S(H),$$
called the \emph{Wick product of $h_1 \otimes \dots \otimes h_n$}, such that
$$W(h_1 \otimes \dots \otimes h_n) \Omega = h_1 \otimes \dots \otimes h_n.$$
More precisely, if $(e_j)_{j \in J}$ is an orthonormal basis of $H$ then
$$W(e_{j_1}^{\otimes k_1} \otimes \dots \otimes e_{j_n}^{\otimes k_n}) = U_{k_1}(X(e_{j_1})) \cdots U_{k_n}(X(e_{j_n})),$$
where $j_1 \neq j_2 \neq \dots \neq j_n$ and $U_k$ denotes the \emph{$k$'th (normalized) Chebyshev polynomial of the second kind}. These polynomials are determined by $U_0(X) = 1$, $U_1(X)=X$ and the recursion $U_{k+1}(X) = X U_k(X) - U_{k-1}(X)$ for $k\geq 1$.
\end{lemma}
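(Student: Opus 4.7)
The plan is to establish the lemma in two stages: first uniqueness, which is immediate from the fact that $\Omega$ is a separating vector for $\S(H)$, and then existence, which I construct explicitly on basis tensors via the Chebyshev polynomial formula and extend by $\C$-linearity.

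For uniqueness, observe that for any $X \in \S(H)$ we have $\tau(X^\ast X) = \langle X\Omega, X\Omega\rangle_{\F(H_\C)} = \|X\Omega\|^2$, so faithfulness of $\tau$ implies that the map $X \mapsto X\Omega$ is injective on $\S(H)$. Hence if $W_1, W_2 \in \S(H)$ both satisfy $W_i\Omega = h_1\otimes\cdots\otimes h_n$, their difference annihilates $\Omega$ and must therefore vanish.

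For existence, an orthonormal $\R$-basis $(e_j)_{j \in J}$ of $H$ is automatically an orthonormal $\C$-basis of $H_\C$, so every pure tensor $h_1 \otimes \cdots \otimes h_n \in H_\C^{\otimes n}$ decomposes as a finite $\C$-linear combination of basis tensors, each of which, after grouping consecutive equal indices, takes the form $e_{j_1}^{\otimes k_1} \otimes \cdots \otimes e_{j_n}^{\otimes k_n}$ with $j_1 \neq j_2 \neq \cdots \neq j_n$. On such basis tensors I set
$$W(e_{j_1}^{\otimes k_1} \otimes \cdots \otimes e_{j_n}^{\otimes k_n}) := U_{k_1}(X(e_{j_1})) \cdots U_{k_n}(X(e_{j_n})) \in \S_\alg(H),$$
and extend $\C$-linearly. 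Well-definedness of the extension is forced by the uniqueness step, and membership in $\S(H)$ persists because $\S(H)$ is a complex algebra.

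The heart of the argument is the Wick identity on basis tensors, which I verify by a nested induction. The outer induction is on the number of distinct-index blocks $n$, with the base $n = 0$ being $W(\Omega)\Omega = \Omega$. For the inductive step, set $\xi := U_{k_2}(X(e_{j_2})) \cdots U_{k_n}(X(e_{j_n}))\Omega$, which by the outer induction hypothesis equals $e_{j_2}^{\otimes k_2} \otimes \cdots \otimes e_{j_n}^{\otimes k_n}$. Because $j_1 \neq j_2$, the annihilation operator $l^\ast(e_{j_1})$ kills the leading factor of $\xi$, giving $X(e_{j_1})\xi = e_{j_1}\otimes\xi$. A further induction on $m$, based on the identity $X(e_{j_1})(e_{j_1}^{\otimes m} \otimes \xi) = e_{j_1}^{\otimes (m+1)} \otimes \xi + e_{j_1}^{\otimes (m-1)}\otimes\xi$ (valid for $m \geq 1$) combined with the Chebyshev recursion $U_{m+1}(x) = xU_m(x) - U_{m-1}(x)$, then yields $U_m(X(e_{j_1}))\xi = e_{j_1}^{\otimes m} \otimes \xi$ as required. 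The main technical point, and the one place where the orthogonality $j_1 \neq j_2$ is indispensable, is the precise cancellation between the spurious $l^\ast$-contribution that shortens the leading block by one and the Chebyshev subtraction $-U_{m-1}$ that exactly compensates for it; without this orthogonality $l^\ast(e_{j_1})$ would bleed into the next block and the clean recursion would fail.
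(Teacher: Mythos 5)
Your uniqueness argument (the vacuum is separating for $\S(H)$ because $\tau$ is faithful and $\tau(X^\ast X)=\|X\Omega\|^2$) and your nested Chebyshev induction are both correct; the cancellation between the annihilation term and the $-U_{m-1}$ term is exactly the right mechanism, and this is essentially the standard argument. Note that the paper itself gives no proof of this lemma but only cites \cite[Section 5.1]{BianeSpeicher1998}, so your write-up is the only actual proof on the table.

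The one genuine gap is in the existence step for arbitrary $h_1,\dots,h_n\in H_\C$. Your claim that every pure tensor $h_1\otimes\cdots\otimes h_n$ decomposes as a \emph{finite} $\C$-linear combination of basis tensors is false for a fixed orthonormal basis of an infinite-dimensional $H$ (and $H=L^2(\R_+,\R)$ is infinite-dimensional in the application): a general $h_i$ is only a norm-convergent infinite series in the $e_j$, so as written your construction defines $W$ only on the algebraic span of basis tensors, not on the pure tensors the lemma is about. Two standard repairs: (i) for a fixed pure tensor, the real and imaginary parts of $h_1,\dots,h_n$ span a finite-dimensional real subspace of $H$, so you may choose the orthonormal basis adapted to them, making each $h_i$ a genuinely finite $\C$-combination of basis vectors; uniqueness then shows the resulting operator is independent of this choice and that the displayed formula holds for every orthonormal basis. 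Or (ii) approximate $h_1\otimes\cdots\otimes h_n$ by finite combinations of basis tensors and pass to the limit --- but then you need an operator-norm bound such as the Haagerup-type inequality $\|W(f)\|\leq (n+1)\|f\|_{H_\C^{\otimes n}}$ (cf.\ \eqref{Haagerup-inequality}) to guarantee the limit is a bounded operator lying in $\S(H)$; convergence of $W(f_k)\Omega$ in $\F(H_\C)$, i.e.\ of $W(f_k)$ in $L^2(\S(H),\tau)$, does not by itself keep you inside the von Neumann algebra. With either repair the proof is complete.
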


Note that the lemma implies in particular that $\Phi_0(\S_\alg(H)) = \F_\alg(H)$.

\subsubsection{$\F(L^2(\R_+))$ and the free Brownian motion}

We return now to the actual goal of this subsection, namely the construction of the free Brownian motion. This is achieved by applying the foregoing constructions to the real Hilbert $H = L^2(\R_+,\R)$, whose complexification is clearly given by $H_\C \cong L^2(\R_+)$.

In the $W^\ast$-probability space $(\S,\tau)$ where we abbreviate $\S:=\S(L^2(\R_+,\R))$, the free Brownian motion $(S_t)_{t\geq 0}$ is obtained by putting
$$S_t := X(1_{[0,t]}) \qquad\text{for all $t \geq 0$}.$$
The corresponding filtration $(\S_t)_{t\geq0}$ of $\S$ is given by
$$\S_t := \vN\big(\{X(h)|\ h\in L^2([0,t],\R)\}\big),$$
where we regard $L^2([0,t],\R)$ as a subspace of $L^2(\R_+,\R)$ via extension by zero. In fact, $\S_t$ is generated as a von Neumann algebra by $\{S_s|\ 0 \leq s \leq t\}$, while $\S$ is generated by $\{S_s|\ s\geq 0\}$.

The very concrete realization of the free Brownian motion in the $W^\ast$-probability space $(\S,\tau)$ has the advantage that it carries the rich structure provided by the underlying Fock space $\F := \F(L^2(\R_+))$ by the isometric isomorphism
$$\Phi:\ L^2(\S,\tau) \to \F,$$
which was obtained by isometric extension of the map $\Phi_0: \S \to \F$ given by $\Phi_0(X) = X\Omega$. This will be used in the next subsection on free Malliavin calculus.

But before continuing in this direction, we first discuss the chaos decomposition for arbitrary elements in $L^2(\S,\tau)$, which emerges from the isomorphism $\Phi$. In the simplest case, it boils down to a nice relation between Wigner integrals and the Wick products as introduced in Lemma \ref{lem:Wick-product}. More precisely, we have for all $h_1,\dots,h_n\in L^2(\R_+)$ that
$$W(h_1 \otimes \dots \otimes h_n) = I_n^S(h_1 \otimes \dots \otimes h_n) = \int_{\R_+^n} h_1(t_1) \cdots h_n(t_n)\, dS_{t_1} \cdots dS_{t_n}.$$
This observation is generalized by the following result.

\begin{proposition}[Proposition 5.3.2. in \cite{BianeSpeicher1998}]
The inverse of the isomorphism $\Phi: L^2(\S,\tau) \to \F$ is given by
$$I^S:\ \F \to L^2(\S,\tau),\ f\mapsto I^S(f),$$
where
$$I^S(f) := \sum^\infty_{n=0} I^S_n(f_n)$$
for any
$$f=(f_n)_{n=0}^\infty \in \bigoplus^\infty_{n=0} L^2(\R^n_+) \cong \F.$$
\end{proposition}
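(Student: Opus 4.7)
The plan is to exhibit $I^S \colon \F \to L^2(\S,\tau)$ as a well-defined linear isometry that satisfies $\Phi \circ I^S = \id_\F$. Since Lemma \ref{lem:Wick-product} already ensures that $\Phi$ is a surjective isometry from $L^2(\S,\tau)$ onto $\F$ (hence a unitary isomorphism), such an identity automatically forces $I^S = \Phi^{-1}$.

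I would first establish mutual orthogonality of the chaoses in $L^2(\S,\tau)$: for $f \in L^2(\R_+^n)$ and $g \in L^2(\R_+^m)$ with $n \neq m$, I want $\tau(I_m^S(g)^\ast I_n^S(f)) = 0$. Writing $I_m^S(g)^\ast = I_m^S(g^\ast)$ and invoking the It\^o product formula from Theorem \ref{thm:Wigner-products}, the product decomposes as a sum of Wigner integrals $I_{n+m-2p}^S(g^\ast \stackrel{p}{\smallfrown} f)$ with $0 \le p \le \min\{n,m\}$. Since $n \neq m$, every index $n+m-2p$ satisfies $n+m-2p \geq |n-m| \geq 1$, so the problem reduces to showing $\tau(I_k^S(h)) = 0$ whenever $k \ge 1$ and $h \in L^2(\R_+^k)$. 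This in turn follows from the Wick identity recalled just before the proposition: applying $I_k^S(h_1 \otimes \cdots \otimes h_k) = W(h_1 \otimes \cdots \otimes h_k)$ to $\Omega$ places the resulting vector in $L^2(\R_+)^{\otimes k}$, which is orthogonal to $\C\Omega$ inside $\F$; the general case then follows by linearity and $L^2$-continuity of $I_k^S$ combined with the representation $\tau(\cdot) = \langle (\cdot)\Omega, \Omega\rangle$.

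Combining this cross-chaos orthogonality with the It\^o isometry inside each single chaos, the partial sums $\sum_{n=0}^{N} I_n^S(f_n)$ will be Cauchy in $L^2(\S,\tau)$ for every $f = (f_n)_{n\ge 0} \in \F$, and the limit $I^S(f)$ will satisfy the Pythagorean identity $\|I^S(f)\|_2^2 = \sum_{n=0}^\infty \|f_n\|_{L^2(\R_+^n)}^2 = \|f\|_\F^2$, so that $I^S$ is a well-defined linear isometry. The identity $\Phi \circ I^S = \id_\F$ then only needs to be checked on elementary tensors, where it reduces once more to the Wick identity $\Phi_0(I_n^S(h_1 \otimes \cdots \otimes h_n)) = h_1 \otimes \cdots \otimes h_n$; linearity and continuity propagate it first to each finite chaos and then to all of $\F$. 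The only genuine obstacle is the cross-chaos orthogonality, which the above argument handles by exploiting traciality together with Theorem \ref{thm:Wigner-products}, rather than going through a combinatorial diagram calculus.
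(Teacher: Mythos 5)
Your argument is correct, but note that the paper does not prove this proposition at all: it is imported verbatim as Proposition 5.3.2 of \cite{BianeSpeicher1998}, so any proof you give is necessarily your own route. Your overall scheme --- build $I^S$ as an isometry on $\F$, verify $\Phi\circ I^S=\id_\F$ on elementary tensors via the Wick identity $W(h_1\otimes\dots\otimes h_n)=I^S_n(h_1\otimes\dots\otimes h_n)$, and conclude $I^S=\Phi^{-1}$ from the surjectivity of $\Phi$ supplied by Lemma \ref{lem:Wick-product} --- is sound, and each step (the bound $n+m-2p\geq|n-m|\geq1$, the vanishing of $\tau(I^S_k(h))=\langle I^S_k(h)\Omega,\Omega\rangle$ for $k\geq1$, the density and continuity arguments) checks out. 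The one structural remark worth making is that your detour through Theorem \ref{thm:Wigner-products} to get cross-chaos orthogonality is redundant given the rest of your proof: once you know, as you do from the Wick identity plus linearity and $L^2$-continuity, that $\Phi(I^S_n(f))=f$ for every $f\in L^2(\R_+^n)$, the orthogonality $\langle I^S_n(f),I^S_m(g)\rangle_{L^2(\S,\tau)}=\langle f,g\rangle_\F=0$ for $n\neq m$ is inherited directly from the orthogonality of $H^{\otimes n}$ and $H^{\otimes m}$ inside $\F$, because $\Phi$ is isometric; this is essentially how \cite{BianeSpeicher1998} proceed, and it spares you both the contraction bookkeeping and the separate verification that $\tau$ kills every chaos of positive order. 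Your version buys a small amount of independence --- it establishes the orthogonality intrinsically in $(\S,\tau)$ via traciality and the It\^o product formula, without appealing to the Fock-space picture at that stage --- at the cost of extra machinery; either way the proof ultimately rests on the same Wick identity, which the paper states (without proof) immediately before the proposition, so no circularity arises.
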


This means that each element of $L^2(\S,\tau)$ has a unique representation in the form $I^S(f)$ for some $f\in \bigoplus^\infty_{n=0} L^2(\R^n_+)$, to which we refer as its \emph{chaos decomposition}.

There is a similar decomposition for $L^2$-biprocesses. Since the mapping $I^S: \F \to L^2(\S,\tau)$ gives rise to an isometric isomorphism
$$I^S \otimes I^S:\ \F \otimes \F \to L^2(\S,\tau) \otimes L^2(\S,\tau),$$
we see, by using the natural isometric identifications
$$L^2(\R_+, \F \otimes \F) \cong \F \otimes L^2(\R_+) \otimes \F$$
and
$$L^2(\R_+, L^2(\S,\tau) \otimes L^2(\S,\tau)) \cong L^2(\S,\tau) \otimes L^2(\R_+) \otimes L^2(\S,\tau) \cong \B_2,$$
that $I^S \otimes I^S$ induces an isometric isomorphism
$$I^S \otimes I^S:\ L^2(\R_+, \F\otimes\F) \to \B_2,$$
which is again denoted by $I^S \otimes I^S$. More explicitly, this induced isomorphism sends each $f: \R_+ \to \F\otimes\F, t\mapsto f_t$ that belongs to $L^2(\R_+,\F \otimes \F)$ to the $L^2$-biprocess that is given by $t \mapsto (I^S \otimes I^S)(f_t)$.

The following diagram offers a clear view on the situation described above.

$$\begin{xy}
\xymatrix{
\F \otimes L^2(\R_+) \otimes \F \ar[rr]^{\cong} \ar[dd]_{I^S \otimes \id \otimes I^S} & & L^2(\R_+, \F\otimes\F) \ar@{-->}[dd]^{I^S\otimes I^S}\\
                                                & &\\
L^2(\S,\tau) \otimes L^2(\R_+) \otimes L^2(\S,\tau) \ar[rr]^/1cm/{\cong} & & \B_2
}
\end{xy}$$

We call $U = (I^S \otimes I^S)(f)$ for $f\in L^2(\R_+, \F\otimes\F)$ the \emph{Wigner chaos expansion of the $L^2$-biprocess $U$}.

\subsection{Free Malliavin calculus}

Like in the classical case, the basic operators of free Malliavin calculus are constructed first on the side of the full Fock space and are then transferred to the algebra of field operators via the identification that is provided by the map $X \mapsto X \Omega$.

\subsubsection{Free Malliavin calculus on $\F(H)$}

As above in the construction of the free Brownian motion, we begin with the general case of an arbitrary complex Hilbert space $H$. On the full Fock space $\F(H)$ over $H$, we consider
\begin{itemize}
 \item an unbounded linear operator $$\tilde{\nabla}:\ \F(H) \supseteq D(\tilde{\nabla}) \to \F(H) \otimes H \otimes \F(H)$$ with domain $D(\tilde{\nabla}) = \F_\alg(H)$, which is determined by the conditions $\tilde{\nabla} \Omega = 0$ and $$\tilde{\nabla}(h_1 \otimes \dots \otimes h_n) := \sum^n_{j=1} (h_1 \otimes \dots \otimes h_{j-1}) \otimes h_j \otimes (h_{j+1} \otimes \dots \otimes h_n),$$ where the tensor products appearing in the brackets are understood as $\Omega$ if the corresponding set of indices happens to be empty.
 \item an unbounded linear operator $$\tilde{\delta}:\ \F(H) \otimes H \otimes \F(H) \supseteq D(\delta) \to \F(H)$$ with domain $D(\tilde{\delta}) = \F_\alg(H) \odot H \odot \F_\alg(H)$ by linear extension of
\begin{eqnarray*}
\tilde{\delta}((h_1 \otimes \dots \otimes h_n) \otimes h \otimes (g_1 \otimes \dots \otimes g_m)) &:=& h_1 \otimes \dots \otimes h_n \otimes h \otimes g_1 \otimes \dots \otimes g_m,\\
\tilde{\delta}(\Omega \otimes h \otimes (g_1 \otimes \dots \otimes g_m)) &:=& h \otimes g_1 \otimes \dots \otimes g_m,\\
\tilde{\delta}((h_1 \otimes \dots \otimes h_n) \otimes h \otimes \Omega) &:=& h_1 \otimes \dots \otimes h_n \otimes h,\\
\tilde{\delta}(\Omega \otimes h \otimes \Omega) &:=& h
\end{eqnarray*}
 \item an unbounded linear operator $$\tilde{N}:\ \F(\H) \supseteq D(N) \to \F(\H)$$ with domain $D(\tilde{N}) = \F_\alg(H)$, which is defined by $\tilde{N} \Omega = 0$ and $$\tilde{N}(h_1 \otimes \cdots \otimes h_n) := n\, h_1 \otimes \cdots \otimes h_n.$$ 
\end{itemize}

We collect now a few observations related to the operators $\tilde{\nabla}$ and $\tilde{\delta}$. We grant that some of these statements might appear quite artificial at the first sight, but there actual meaning will become clear after passing from the Fock space to operators defined on it.

\begin{remark}\label{rem:Fock-operators}
Consider the setting that was described above.
\begin{itemize}
 \item[(a)] A straightforward calculation shows that
\begin{equation}\label{eq:Fock-adjoints}
\langle\tilde{\nabla} y, u\rangle_{\F(H) \otimes H \otimes \F(H)} = \langle y, \tilde{\delta}(u)\rangle_{\F(H)}
\end{equation}
holds for all $y \in D(\tilde{\nabla})$ and $u \in D(\tilde{\delta})$.
 \item[(b)] If we endow $\F_\alg(H)$ with the multiplication induced by the tensor product $\otimes$ (in fact, we obtain in this way the tensor algebra over $H$), we may easily check that $\tilde{\nabla}$ satisfies a kind of product rule, namely
\begin{equation}\label{eq:Fock-derivation}
\tilde{\nabla}(y_1 \otimes y_2) = (\tilde{\nabla} y_1) \cdot y_2 + y_1 \cdot (\tilde{\nabla} y_2)
\end{equation}
for all $y_1,y_2 \in D(\tilde{\nabla})$, where $\cdot$ denotes the canonical left and right action, respectively, of $\F_\alg(H)$ on $\F_\alg(H) \otimes H \otimes \F_\alg(H)$ that is induced by $\otimes$, i.e. $$y_1 \cdot (x_1 \otimes h \otimes x_2) \cdot y_2 = (y_1 \otimes x_1) \otimes h \otimes (x_2 \otimes y_2).$$
 \item[(c)] Since the range of $\tilde{\nabla}$ is by definition contained in the domain of $\tilde{\delta}$, the composition $\tilde{\delta} \circ \tilde{\nabla}$ is well-defined. In fact, one has $\tilde{N} = \tilde{\delta} \circ \tilde{\nabla}$.
\end{itemize}
\end{remark}

\subsubsection{Free Malliavin calculus on $\F(L^2(\R_+))$}

We apply now the preceding construction in the special case, where the Hilbert space $H$ is given by $L^2(\R_+)$. Thus, we may use the isomorphisms
$$I^S:\ \F \to L^2(\S,\tau) \qquad\text{and}\qquad I^S \otimes I^S:\ L^2(\R_+, \F\otimes\F) \to \B_2$$
to pull over
\begin{itemize}
 \item the operator
$$\tilde{\nabla}:\ \F \supseteq D(\tilde{\nabla}) \to L^2(\R_+, \F\otimes\F)$$
to the so-called \emph{gradient operator}
$$\nabla:\ L^2(\S,\tau) \supseteq D(\nabla) \to \B_2$$
with domain $D(\nabla) = I^S(D(\tilde{\nabla}))$, 
 \item and the operator
$$\tilde{\delta}:\ L^2(\R_+, \F\otimes\F) \supseteq D(\tilde{\delta}) \to \F$$
to the so-called \emph{divergence operator}
$$\delta:\ \B_2 \supseteq D(\delta) \to L^2(\S,\tau)$$
with domain $D(\delta) = (I^S\otimes I^S)(D(\tilde{\delta}))$,
\end{itemize}
in the obvious way as shown in the following two commutative diagrams.
$$\begin{xy}
\xymatrix{
\F \ar[rr]^{I^S}                                                       & & L^2(\S,\tau)\\
D(\tilde{\nabla}) \ar@^{(->}[u] \ar[dd]_{\tilde{\nabla}} \ar[rr]^{I^S} & & D(\nabla) \ar@^{(->}[u] \ar@{-->}[dd]^\nabla\\
                                                                       & &\\
L^2(\R_+, \F\otimes\F) \ar[rr]^/0.5cm/{I^S \otimes I^S}                & & \B_2
}
\end{xy} \hspace{2cm}
\begin{xy}
\xymatrix{
L^2(\R_+,\F \otimes \F) \ar[rr]^/0.5cm/{I^S \otimes I^S}                           & & \B_2\\
D(\tilde{\delta}) \ar@^{(->}[u] \ar[dd]_{\tilde{\delta}} \ar[rr]^{I^S \otimes I^S} & & D(\delta) \ar@^{(->}[u] \ar@{-->}[dd]^\delta\\
                                                                                   & &\\
\F \ar[rr]^{I^S}                                                                   & & L^2(\S,\tau)
}
\end{xy}$$
In fact, the above definitions amount to
$$D(\nabla) = \S_\alg \qquad\text{and}\qquad D(\delta) = \S_\alg \odot L^2(\R) \odot \S_\alg,$$
where we abbreviate $\S_\alg := \S_\alg(L^2(\R_+,\R))$.

\begin{remark}
We may observe that the properties of the operators $\tilde{\nabla}$ and $\tilde{\delta}$, which were formulated in (a) and (b) of Remark \ref{rem:Fock-operators} take now a much more natural form. Indeed,
\begin{itemize}
 \item formula \eqref{eq:Fock-adjoints} reduces to
\begin{equation}\label{eq:adjoints}
\langle \nabla Y, U\rangle_{\B_2} = \langle Y, \delta(U)\rangle_{L^2(\S,\tau)}
\end{equation}
for all $Y\in D(\nabla)$ and $U\in D(\delta)$,
 \item and \eqref{eq:Fock-derivation} implies that $\nabla$ is a derivation in the sense that a kind of Leibniz rule
\begin{equation}\label{eq:Leibniz-rule}
\nabla(Y_1 Y_2) = (\nabla Y_1) \cdot Y_2 + Y_1 \cdot (\nabla Y_2)
\end{equation}
holds for all $Y_1,Y_2\in D(\nabla)$, where $\cdot$ denotes the left and right action, respectively, of $\S$ on $\B_2$.
\end{itemize}
\end{remark}

We recall \cite[Proposition 3.23]{KempNourdinPeccatiSpeicher2012}, which is itself a combination of Propositions 5.3.9 and 5.3.10 in \cite{BianeSpeicher1998}.

\begin{proposition}\label{prop:gradient}
The gradient operator
$$\nabla:\ L^2(\S,\tau) \supseteq D(\nabla) \to \B_2$$
is densely defined and closable. The domain $D(\overline{\nabla})$ of the closure
$$\overline{\nabla}:\ L^2(\S,\tau) \supseteq D(\overline{\nabla}) \to \B_2$$
can be characterized by the chaos expansion in the following way
$$D(\overline{\nabla}) = \Big\{I^S(f) \Big|\ f=(f_n)_{n=0}^\infty \in \F:\ \sum^\infty_{n=0} n \|f_n\|^2_{L^2(\R_+^n)} < \infty\Big\}.$$
In fact, if we write $Y\in D(\overline{\nabla})$ in the form $Y = I^S(f)$ with $f\in\F$, we have that
$$\|\overline{\nabla} Y\|_{\B_2}^2 = \sum^\infty_{n=0} n \|f_n\|^2_{L^2(\R_+^n)}.$$
Moreover, the action of $\overline{\nabla}$ on its domain $D(\overline{\nabla})$ is determined by
\begin{align*}
\lefteqn{\overline{\nabla}_t \Big(\int f(t_1,\dots,t_n)\, dS_{t_1} \cdots dS_{t_n}\Big)}\\
&= \sum^n_{j=1} \int f(t_1,\dots,t_{j-1},t,t_{j+1},\dots,t_n)\, dS_{t_1} \cdots dS_{t_{j-1}} \otimes dS_{t_{j+1}} \cdots dS_{t_n}
\end{align*}
for $f\in L^2(\R_+^n)$.
\end{proposition}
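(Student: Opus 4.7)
The plan is to do the entire analysis on the full Fock space $\F = \F(L^2(\R_+))$ via $\tilde{\nabla}$ and then transport the results using the isometric isomorphisms $I^S \colon \F \to L^2(\S,\tau)$ and $I^S \otimes I^S \colon L^2(\R_+,\F\otimes\F) \to \B_2$ from the commutative diagram defining $\nabla$. Density of $D(\nabla)$ is immediate since $D(\nabla)=\S_\alg = I^S(\F_\alg)$ and $\F_\alg$ is dense in $\F$. For closability I would invoke the adjoint identity \eqref{eq:adjoints}: it says precisely that $D(\delta) \subseteq D(\nabla^\ast)$ with $\nabla^\ast|_{D(\delta)} = \delta$. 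Since $D(\delta) = \S_\alg \odot L^2(\R_+) \odot \S_\alg$ is dense in $\B_2$, the adjoint $\nabla^\ast$ has dense domain, so $\nabla$ is closable by the standard von Neumann criterion.

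The heart of the argument is an orthogonality/norm computation on the Fock side. For $h_1,\dots,h_n \in L^2(\R_+)$, the $n$ summands of
$$\tilde{\nabla}(h_1\otimes\cdots\otimes h_n) \;=\; \sum_{j=1}^n (h_1\otimes\cdots\otimes h_{j-1})\otimes h_j \otimes (h_{j+1}\otimes\cdots\otimes h_n)$$
lie in pairwise distinct bi-degree components $L^2(\R_+)^{\otimes(j-1)} \otimes L^2(\R_+) \otimes L^2(\R_+)^{\otimes(n-j)}$ of $\F \otimes L^2(\R_+) \otimes \F$, and each has the same norm as $h_1 \otimes \cdots \otimes h_n$, giving $\|\tilde{\nabla}(h_1\otimes\cdots\otimes h_n)\|^2 = n\,\|h_1\otimes\cdots\otimes h_n\|^2$. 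By linearity and orthogonality of distinct multi-indices this extends to $\|\tilde{\nabla} f_n\|^2 = n\,\|f_n\|^2$ for every $f_n \in L^2(\R_+)^{\odot n}$. Moreover, for $n \neq m$ the images $\tilde{\nabla}(L^2(\R_+)^{\odot n})$ and $\tilde{\nabla}(L^2(\R_+)^{\odot m})$ lie in subspaces of total degree $n-1$ and $m-1$ respectively in $\F \otimes L^2(\R_+) \otimes \F$, hence are orthogonal. Thus for $Y = I^S(f)\in D(\nabla)$ with finitely supported $f = (f_n)$, $f_n \in L^2(\R_+)^{\odot n}$, we conclude $\|\nabla Y\|_{\B_2}^2 = \sum_n n\,\|f_n\|_{L^2(\R_+^n)}^2$.

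With this identity in hand, the description of $D(\overline{\nabla})$ is a routine approximation. If $Y = I^S(f)$ satisfies $\sum_n n\,\|f_n\|^2 < \infty$, first approximate each $f_n \in L^2(\R_+)^{\otimes n}$ by elements of $L^2(\R_+)^{\odot n}$ and truncate in the chaos index; the Fock-space norm identity turns the resulting sequence into a Cauchy sequence simultaneously in $L^2(\S,\tau)$ and under $\nabla$ in $\B_2$, placing $Y$ in $D(\overline{\nabla})$ with $\|\overline{\nabla} Y\|_{\B_2}^2 = \sum_n n\,\|f_n\|^2$. Conversely, if $Y_k \in D(\nabla)$ with $Y_k \to Y$ in $L^2(\S,\tau)$ and $\nabla Y_k$ convergent in $\B_2$, then passing to chaos components yields $f^{(k)}_n \to f_n$ in $L^2(\R_+^n)$ for every $n$, and since $\|\nabla Y_k\|_{\B_2}^2 = \sum_n n\,\|f^{(k)}_n\|^2$ is bounded, Fatou's lemma gives $\sum_n n\,\|f_n\|^2 < \infty$.

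Finally, the explicit formula for $\overline{\nabla}_t$ on a Wigner integral follows by applying the definition of $\tilde{\nabla}$ to a pure tensor $h_1 \otimes \cdots \otimes h_n$, using the identification $I^S_n(h_1 \otimes \cdots \otimes h_n) = W(h_1 \otimes \cdots \otimes h_n)$ of Lemma \ref{lem:Wick-product}, and then extending by continuity from $L^2(\R_+)^{\odot n}$ to all $f \in L^2(\R_+^n)$. I expect the main obstacle to lie in the bookkeeping surrounding the chain of natural identifications $L^2(\R_+,\F\otimes\F) \cong \F \otimes L^2(\R_+) \otimes \F$ and $\B_2 \cong L^2(\S,\tau) \otimes L^2(\R_+) \otimes L^2(\S,\tau)$; one must be careful that under these isomorphisms the Fock-level formula for $\tilde{\nabla}$ translates unambiguously into the pointwise $dS_{t_1}\cdots \widehat{dS_{t_j}} \cdots dS_{t_n}$ formula claimed in the statement, which is where the combinatorial content of the gradient genuinely meets the analytic structure of free Malliavin calculus.
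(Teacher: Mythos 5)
Your argument is correct. Note that the paper does not prove this proposition itself but quotes it as \cite[Proposition 3.23]{KempNourdinPeccatiSpeicher2012}, i.e.\ as a combination of Propositions 5.3.9 and 5.3.10 of \cite{BianeSpeicher1998}; your Fock-space computation --- orthogonality of the bi-degree components of $\tilde{\nabla}(h_1\otimes\cdots\otimes h_n)$ yielding $\|\nabla Y\|_{\B_2}^2=\sum_n n\,\|f_n\|^2_{L^2(\R_+^n)}$ on $\S_\alg$, closability via the densely defined adjoint through \eqref{eq:adjoints}, and the graph-norm approximation plus Fatou argument for the domain --- is essentially the standard proof given in those references.
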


\begin{remark}\label{rem:domain-gradient}
We point out that Proposition 5.2.3 in \cite{BianeSpeicher1998} shows beyond this that $\nabla$ is also closable as an unbounded linear operator from $L^p(\S,\tau)$ to $\B_p$ for each $1 \leq p < \infty$. The domain of its closure, which will be denoted by $\D^p$, is given as the closure of $\S_\alg$ with respect to the norm $\|\cdot\|_{1,p}$ defined by
$$\|Y\|_{1,p} := \big(\|Y\|^p_{L^p(\S,\tau)} + \|\nabla Y\|_{\B_p}^p\big)^\frac{1}{p}.$$
We will use this observation only in the case $p=2$, where $D(\overline{\nabla}) = \D^2$ gives an alternative description of the domain $D(\overline{\nabla})$ of the closure of the gradient operator $\nabla$, which was characterized in Proposition \ref{prop:gradient} in terms of the chaos decomposition.
\end{remark}

Concerning now the divergence operator, we record here \cite[Proposition 3.25]{KempNourdinPeccatiSpeicher2012}, which combines Propositions 5.3.9 and 5.3.11 of \cite{BianeSpeicher1998}.

\begin{proposition}\label{prop:divergence}
The divergence operator
$$\delta:\ \B_2 \supseteq D(\delta) \to L^2(\S,\tau)$$
is densely defined and closable. The domain $D(\overline{\delta})$ of its closure
$$\overline{\delta}:\ \B_2 \supseteq D(\overline{\delta}) \to L^2(\S,\tau)$$
contains all adapted $L^2$-biprocesses $\B_2^a$ and for each $U\in\B_2^a$, we have
$$\overline{\delta}(U) = \int_{\R_+} U_t \sharp dS_t.$$
In general, the action of $\overline{\delta}$ on its domain $D(\overline{\delta})$ is determined by
\begin{align*}
\lefteqn{\overline{\delta} \Big(\int f_t(t_1,\dots,t_n; s_1,\dots,s_m)\, dS_{t_1} \cdots dS_{t_n} \otimes dS_{s_1} \cdots dS_{s_m}\Big)}\\
&= \int f_t(t_1,\dots,t_n; s_1,\dots,s_m)\, dS_{t_1} \cdots dS_{t_n} dS_t dS_{s_1} \cdots dS_{s_m}
\end{align*}
for any $f\in L^2(\R_+,L^2(\R^n_+) \otimes L^2(\R^m_+))$.
\end{proposition}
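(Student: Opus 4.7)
The plan is to establish the three assertions of the proposition---density of $D(\delta)$, closability of $\delta$, and the explicit formula (together with its specialization to the stochastic integral on adapted biprocesses)---in that order, using the Fock-space model at every step.

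First, I would verify that $D(\delta) = \S_\alg \odot L^2(\R_+) \odot \S_\alg$ is dense in $\B_2$. Since $\Phi_0(\S_\alg) = \F_\alg$ by Lemma~\ref{lem:Wick-product}, and $\F_\alg$ is dense in $\F \cong L^2(\S,\tau)$, the algebra $\S_\alg$ is dense in $L^2(\S,\tau)$; the algebraic tensor product of dense subsets in the Hilbert tensor product $\B_2 \cong L^2(\S,\tau) \otimes L^2(\R_+) \otimes L^2(\S,\tau)$ is then dense. For closability, I would invoke the duality~\eqref{eq:adjoints}, which says precisely that $\delta \subseteq \nabla^{\ast}$. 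Since $\nabla$ is densely defined, $\nabla^{\ast}$ is automatically closed, so $\delta$ inherits a closure $\overline{\delta}$.

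Next, for the explicit Wigner-integral formula, I would transport the Fock-space definition of $\tilde{\delta}$ through $I^S \otimes I^S$. On elementary tensors $h \otimes k \otimes g$ with $h \in L^2(\R_+^n)$, $k \in L^2(\R_+)$, $g \in L^2(\R_+^m)$, viewed inside $\F \otimes L^2(\R_+) \otimes \F \cong L^2(\R_+, \F \otimes \F)$, the operator $\tilde{\delta}$ simply concatenates the three factors into an element of $L^2(\R_+^{n+m+1})$. Applying $I^S$ yields exactly the Wigner integral stated in the proposition, with the free variable from the middle $L^2(\R_+)$ inserted between the two chaos blocks. By linearity and density this extends to all of $D(\delta)$, and then, via the closedness of $\overline{\delta}$, to $D(\overline{\delta})$.

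Finally, for the adapted case, I would check that on simple adapted biprocesses $U_t = 1_{[s,u)}(t)\, X \otimes Y$ with $X, Y \in \S_\alg \cap \S_s$, the identity $\delta(U) = \int_{\R_+} U_t \sharp dS_t$ holds. Such $U$ lies in $D(\delta)$, so the previous step identifies $\delta(U)$ with $I^S_{n+m+1}(h \otimes 1_{[s,u)} \otimes g)$, where $h \in L^2([0,s]^n)$ and $g \in L^2([0,s]^m)$ are chaos kernels of $X$ and $Y$. On the other hand, $\int_{\R_+} U_t \sharp dS_t = X(S_u - S_s) Y = I^S_n(h) \cdot I^S_1(1_{[s,u)}) \cdot I^S_m(g)$. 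Applying the It\^o product formula of Theorem~\ref{thm:Wigner-products} twice, every contraction term vanishes because the supports $[0,s]^n$, $[s,u)$, and $[0,s]^m$ overlap only on sets of Lebesgue measure zero, so only the top-order term $I^S_{n+m+1}(h \otimes 1_{[s,u)} \otimes g)$ survives; this matches $\delta(U)$. The general Wigner-It\^o isometry gives $\|\!\int_{\R_+} U_t \sharp dS_t\|_2 = \|U\|_{\B_2}$ on $\B_2^a$; combined with the closedness of $\overline{\delta}$ and the density of such simple $U$ in $\B_2^a$, this identification extends continuously to all of $\B_2^a$, establishing $\B_2^a \subseteq D(\overline{\delta})$ together with the stated formula.

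The main obstacle I anticipate is precisely this matching on simple adapted biprocesses: the Fock-side definition of $\delta$ is purely algebraic (tensor concatenation), while the stochastic integral is given analytically as a product in $M$, and forcing the two to coincide requires the adaptedness hypothesis so that disjoint supports kill every contraction term in the It\^o formula. Without adaptedness, the chaos-expansion formula for $\overline{\delta}$ still makes sense, but its image can no longer be recognized as a product of Wigner integrals, and the stochastic integral on the right-hand side is not even defined.
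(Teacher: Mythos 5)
Your argument is correct; note that the paper itself gives no proof of this proposition, but simply quotes it as \cite[Proposition 3.25]{KempNourdinPeccatiSpeicher2012}, which in turn combines Propositions 5.3.9 and 5.3.11 of \cite{BianeSpeicher1998}. Your reconstruction follows the same route as those sources: density of $\S_\alg \odot L^2(\R_+) \odot \S_\alg$ in $\B_2$, closability via the duality \eqref{eq:adjoints} (i.e.\ $\delta \subseteq \nabla^\ast$, and the adjoint of a densely defined operator is closed), the chaos-expansion formula as a direct transport of the Fock-space concatenation $\tilde\delta$ through $I^S \otimes I^S$, and the identification with the stochastic integral on adapted biprocesses via the It\^o product formula, where adaptedness kills every contraction term because $[0,s]\cap[s,u)$ has Lebesgue measure zero. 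The one step you should make explicit is the approximation at the end: a general simple adapted biprocess has legs $U^{(j)} \in \S_{s_j} \odot \S_{s_j}$, which need not lie in $\S_\alg \odot \S_\alg$ and hence need not belong to $D(\delta)$; you therefore need that biprocesses of your restricted form (legs in $\S_\alg \cap \S_{s}$, single chaos components) are still $\|\cdot\|_{\B_2}$-dense in $\B_2^a$, which follows from $L^2$-density of $\S_\alg \cap \S_s$ in $\S_s$ but deserves a sentence. With that supplied, the combination of the isometry on $\E^a$ and the closedness of $\overline{\delta}$ yields $\B_2^a \subseteq D(\overline{\delta})$ exactly as you describe.
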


Finally, we also take the operator $\tilde{N}$ into account. This operator induces the so-called \emph{number operator}
$$N:\ L^2(\S,\tau) \supseteq D(N) \to L^2(\S,\tau)$$
with domain $D(N) := I^S(\F_\alg) = \S_\alg$ as shown in the following commutative diagram.
$$\begin{xy}
\xymatrix{
\F \ar[rr]^{I^S}                                             & & L^2(\S,\tau)\\
D(\tilde{N}) \ar@^{(->}[u] \ar[dd]_{\tilde{N}} \ar[rr]^{I^S} & & D(N) \ar@^{(->}[u] \ar@{-->}[dd]^N\\
                                                             & &\\
\F \ar[rr]^{I^S}                                             & & L^2(\S,\tau)
}
\end{xy}$$

\begin{remark}
The relation $\tilde{N} = \tilde{\delta} \circ \tilde{\nabla}$ on $\F_\alg$, which was recorded in part (c) of Remark \ref{rem:Fock-operators}, translates by definition immediately to the relation $N = \delta \circ \nabla$ on $\S_\alg$.
\end{remark}

We recall now \cite[Remark 3.24]{KempNourdinPeccatiSpeicher2012}.

\begin{proposition}\label{prop:number-operator}
The number operator
$$N:\ L^2(\S,\tau) \supseteq D(N) \to L^2(\S,\tau)$$
is densely defined and closable. The domain $D(\overline{N})$ of its closure can be characterized by using the chaos expansion in the following way
$$D(\overline{N}) = \Big\{I^S(f) \Big|\ f=(f_n)_{n=0}^\infty \in \F:\ \sum^\infty_{n=0} n^2 \|f_n\|^2_{L^2(\R_+^n)} < \infty\Big\}.$$
In particular, the closure of the gradient $\overline{\nabla}$ maps $D(\overline{N})$ into $D(\overline{\delta})$, and on $D(\overline{N})$, it holds true that $\overline{N} = \overline{\delta} \circ \overline{\nabla}|_{D(\overline{N})}$.  
\end{proposition}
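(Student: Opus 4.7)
The cleanest strategy is to transfer everything through the isometric isomorphism $I^S: \F \to L^2(\S,\tau)$, reducing the problem to a standard spectral statement about a diagonal unbounded operator on an orthogonal Hilbert space sum. Since $I^S$ carries $\F_\alg$ onto $D(N) = \S_\alg$ and intertwines $\tilde{N}$ with $N$, every statement about $N$ on $L^2(\S,\tau)$ is equivalent to the corresponding statement about $\tilde{N}$ on $\F = \bigoplus_{n \geq 0} L^2(\R_+^n)$. Dense definedness of $N$ is then immediate, since $\F_\alg$ is dense in $\F$. For closability, I would observe that $\tilde{N}$ acts on $\F_\alg$ as multiplication by $n$ on the mutually orthogonal summands $L^2(\R_+^n)$; in particular $\tilde{N}$ is symmetric on $\F_\alg$, so it is closable, and hence so is $N$.

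To characterize $D(\overline{N})$, I would run the standard argument for diagonal operators. Given $f = (f_n)_{n=0}^\infty \in \F$, define the truncations $f^{(k)} := (f_0, \ldots, f_k, 0, 0, \ldots) \in \F_\alg$. Then $f^{(k)} \to f$ in $\F$, and $\tilde{N} f^{(k)} = (0, f_1, 2 f_2, \ldots, k f_k, 0, \ldots)$. By Pythagoras and orthogonality of the chaos components, the sequence $(\tilde{N} f^{(k)})_k$ is Cauchy in $\F$ if and only if $\sum_{n=0}^\infty n^2 \|f_n\|^2_{L^2(\R_+^n)} < \infty$. Under that condition, $\tilde{N} f^{(k)}$ converges to the obvious element $(n f_n)_{n=0}^\infty$, placing $f$ in $D(\overline{\tilde{N}})$. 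Conversely, if $f \in D(\overline{\tilde{N}})$, testing against vectors supported in a single chaos (and using orthogonality) forces the same summability. Transporting back via $I^S$ gives the stated characterization of $D(\overline{N})$.

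For the relation $\overline{N} = \overline{\delta} \circ \overline{\nabla}\big|_{D(\overline{N})}$, I would fix $Y = I^S(f) \in D(\overline{N})$ and approximate by $Y^{(k)} := I^S(f^{(k)}) \in \S_\alg$. Three convergences hold simultaneously as $k \to \infty$: $Y^{(k)} \to Y$ in $L^2(\S,\tau)$; $\nabla Y^{(k)} \to \overline{\nabla} Y$ in $\B_2$, by the Pythagorean identity $\|\overline{\nabla} Y\|_{\B_2}^2 = \sum_n n \|f_n\|^2$ from Proposition \ref{prop:gradient} together with $\sum_n n^2 \|f_n\|^2 < \infty$; and $N Y^{(k)} \to \overline{N} Y$ in $L^2(\S,\tau)$ by construction. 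On the algebraic domain $\S_\alg$ we have already observed the identity $N = \delta \circ \nabla$, so $\delta(\nabla Y^{(k)}) = N Y^{(k)}$ converges to $\overline{N} Y$. Applying closedness of $\overline{\delta}$ to the sequence $\nabla Y^{(k)} \in D(\delta)$ yields $\overline{\nabla} Y \in D(\overline{\delta})$ and $\overline{\delta}(\overline{\nabla} Y) = \overline{N} Y$, which is exactly the asserted factorization.

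No step here is a genuine obstacle; the only subtlety worth double-checking is the second convergence, namely that the $\B_2$-convergence $\nabla Y^{(k)} \to \overline{\nabla} Y$ truly follows from the chaos-expansion formula in Proposition \ref{prop:gradient}. This is precisely where the stronger summability $\sum n^2 \|f_n\|^2 < \infty$ (as opposed to the merely $\sum n \|f_n\|^2 < \infty$ governing $D(\overline{\nabla})$) is used, but it is automatic on the truncations $f^{(k)}$ and only the $\B_2$-convergence of $\nabla Y^{(k)}$ to $\overline{\nabla} Y$ is actually needed in the closedness argument.
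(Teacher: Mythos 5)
The paper does not actually prove this proposition: it is quoted verbatim from \cite[Remark 3.24]{KempNourdinPeccatiSpeicher2012} (which in turn rests on \cite{BianeSpeicher1998}), so your argument has to be judged on its own. Its overall architecture --- transfer everything through $I^S$ to the diagonal operator $\tilde N$ on $\F$, run the standard graph-norm argument for a multiplication operator, and then obtain $\overline N=\overline\delta\circ\overline\nabla$ by passing a sequence through the algebraic identity $N=\delta\circ\nabla$ and invoking closedness of $\overline\delta$ --- is exactly the right one, and the final closedness step, including the observation that $\sum_n n^2\|f_n\|^2<\infty$ is what forces $\nabla$-convergence of the approximants, is sound.

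There is, however, one recurring technical slip: the truncations $f^{(k)}=(f_0,\dots,f_k,0,\dots)$ lie in $\F_\fin$, \emph{not} in $\F_\alg$, because the individual components $f_n\in L^2(\R_+^n)=L^2(\R_+)^{\otimes n}$ need not be finite sums of elementary tensors. Since $D(\tilde N)=\F_\alg$, $D(N)=D(\nabla)=\S_\alg$ and $D(\delta)=\S_\alg\odot L^2(\R_+)\odot\S_\alg$ are all \emph{algebraic} domains, the expressions $\tilde N f^{(k)}$, $NY^{(k)}$, $\nabla Y^{(k)}$ and $\delta(\nabla Y^{(k)})$ are not defined for your $Y^{(k)}=I^S(f^{(k)})$, and the claim ``$Y^{(k)}\in\S_\alg$'' is false in general; one cannot appeal to the identity $N=\delta\circ\nabla$ on $\S_\alg$ for these elements without circularity. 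The repair is routine but should be made explicit: approximate each $f_n$ ($n\le k$) by elements of $L^2(\R_+)^{\odot n}$ to produce $g^{(k)}\in\F_\alg$ with $\|f-g^{(k)}\|_\F\to0$ and $\sum_n n^2\|f_n-g^{(k)}_n\|^2\to0$; then $g^{(k)}\to f$ in the graph norm of $\tilde N$ (which settles the inclusion ``$\supseteq$'' in the domain characterization), and since $\sum_n n\|f_n-g^{(k)}_n\|^2\le\sum_n n^2\|f_n-g^{(k)}_n\|^2$, Proposition \ref{prop:gradient} gives $\nabla I^S(g^{(k)})\to\overline\nabla Y$ in $\B_2$ while $\delta(\nabla I^S(g^{(k)}))=NI^S(g^{(k)})\to\overline N Y$, so closedness of $\overline\delta$ applies exactly as you intended. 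The converse inclusion via testing against single-chaos vectors is unaffected. With this substitution of the approximating sequence, the proof is complete.
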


\section{Non-commutative derivations}
\label{sec:Derivations}

The second pillar of the proof of Theorem \ref{MainThm} is the theory of non-commutative derivations as it arises from the work of Voiculescu \cite{Voi-Entropy-V, Voi-Entropy-VI} and of Dabrowski \cite{Dab-Gamma, Dab-free_stochastic_PDE}, and the corresponding generalization of methods originating from \cite{MaiSpeicherWeber2015}.

Derivations are mainly characterized by the Leibniz rule, which is a straightforward generalization of the Leibniz rule for usual derivatives. Hence, these objects can be introduced and studied in a purely algebraic setting: if $\A$ is any unital complex algebra and if $\M$ is an arbitrary $\A$-bimodule, we call a linear mapping $\delta:\ \A \to \M$ a \emph{$\M$-valued derivation on $\A$}, if it satisfies the Leibniz rule
$$\delta(x_1 x_2) = \delta(x_1) \cdot x_2 + x_1 \cdot \delta(x_2) \qquad\text{for all $x_1,x_2\in\A$}.$$

But since we are interested more in the analytic rather than the purely algebraic properties of derivations, we will impose here some additional conditions on the algebra $\A$ and the $\A$-bimodule $\M$.

For doing this, we clearly have a lot of flexibility. The most general notion of such analytic derivations is probably the one that is presented in \cite[Definition 4.1]{CiprianiSauvageot2003}. However, the feasibility of our arguments here depends strongly on more restrictive assumptions, due to which those derivations will behave pretty much like the usual non-commutative derivatives, as they appear for instance in \cite{Voi-Entropy-V}. Accordingly, we shall call them non-commutative derivations.

Throughout this section, let $(M,\tau)$ be a tracial $W^\ast$-probability space. 

\begin{definition}\label{def:noncommutative-derivation}
A linear map
$$\delta:\ M\supseteq D(\delta) \to L^2(M,\tau) \otimes L^2(M,\tau)$$
is called a \emph{non-commutative derivation on $M$} if the following two conditions are satisfied:
\begin{itemize}
 \item The domain $D(\delta)$ of $\delta$ is a unital $\ast$-subalgebra of $M$, which is moreover weakly dense in $M$.
 \item The linear map $\delta$ satisfies the \emph{Leibniz rule} (or \emph{product rule})
$$\delta(x_1 x_2) = \delta(x_1) \cdot x_2 + x_1 \cdot \delta(x_2)$$
for all $x_1,x_2\in D(\delta)$, where $\cdot$ denotes the natural bimodule operation of $M$ on the Hilbert space $$L^2(M,\tau) \otimes L^2(M,\tau) \cong L^2(M \otimes M, \tau\otimes\tau).$$
\end{itemize}
\end{definition}

\begin{remark}\label{rem:noncommutative-derivatives}
Let $\C\langle x_1,\dots,x_n\rangle$ denote the $\ast$-algebra of non-commutative polynomials in formal self-adjoint variables $x_1,\dots,x_n$.

If $P$ is any monomial in the variables $x_1,\dots,x_n$, we put for any fixed $i=1,\dots,n$
$$\partial_i P := \sum_{P = P_1 x_i P_2} P_1 \otimes P_2$$
where the sum runs over all decompositions of $P$ in the form $P = P_1 x_i P_2$ with some monomials $P_1,P_2$. In particular, we have $\partial_i x_j = \delta_{i,j} 1\otimes 1$ for $i,j=1,\dots,n$. It is easy to check that $\partial_i$ extends by linearity to a $\C\langle x_1,\dots,x_n\rangle^{\odot 2}$-valued derivation on $\C\langle x_1,\dots,x_n\rangle$.

Conversely, as a linear map
$$\partial_i:\ \C\langle x_1,\dots,x_n\rangle \to \C\langle x_1,\dots,x_n\rangle \odot \C\langle x_1,\dots,x_n\rangle,$$
the \emph{non-commutative derivative $\partial_i$} is uniquely determined by the Leibniz rule and the property $\partial_i x_j = \delta_{i,j} 1\otimes 1$ for $i,j=1,\dots,n$.

In the case $n=1$, we will abbreviate $\partial_1$ simply by $\partial$.

Assume now that $\delta: M\supseteq D(\delta) \to L^2(M,\tau) \otimes L^2(M,\tau)$ is any non-commutative derivation in the sense of Definition \ref{def:noncommutative-derivation}. If $X_1,\dots,X_n$ are self-adjoint elements in $D(\delta)$ and if $P\in \C\langle x_1,\dots,x_n\rangle$ is any non-commutative polynomial, then the evaluation $P(X_1,\dots,X_n)$ belongs clearly to $D(\delta)$ and we have the formula
\begin{equation}\label{eq:noncommutative-derivatives}
\delta(P(X_1,\dots,X_n)) = \sum^n_{i=1} (\partial_i P)(X_1,\dots,X_n) \sharp \delta(X_i),
\end{equation}
where we abbreviate by $Q(X)$ the natural evaluation of any $Q\in \C\langle x_1,\dots,x_n\rangle^{\odot 2}$ at $(X_1,\dots,X_n)$. In other words, the non-commutative derivatives $\partial_1,\dots,\partial_n$ are universal in the sense that they provide an explicit expression for the restriction of any non-commutative derivation $\delta$ to a subalgebra $\C\langle X_1,\dots,X_n\rangle$ of its domain $D(\delta)$ in terms of its values on the generators $X_1,\dots,X_n$.
\end{remark}

Following \cite{Voi-Entropy-V,Voi-Entropy-VI}, we change now our point of view by considering any non-commutative derivation $\delta: M \supseteq D(\delta) \to L^2(M,\tau) \otimes L^2(M,\tau)$ in the sense of Definition \ref{def:noncommutative-derivation} as an unbounded linear operator
$$\delta:\ L^2(M,\tau) \supset D(\delta) \to L^2(M,\tau) \otimes L^2(M,\tau).$$
Since $D(\delta)$ is clearly dense in $L^2(M,\tau)$ with respect to the $L^2$-norm $\|\cdot\|_2$ induced by $\tau$, we can also consider its adjoint operator
$$\delta^\ast:\ L^2(M,\tau) \otimes L^2(M,\tau) \supseteq D(\delta^\ast) \to L^2(M,\tau).$$

The theory that we are going to presented in the next subsections concerns properties of $\delta$ and its adjoint $\delta^\ast$. More precisely, we will discuss the question of closability for $\delta$ and we will show that $\delta$ and $\delta^\ast$, which are unbounded operators by definition, can nevertheless be controlled in appropriate norms. For most of these results, the condition $1 \otimes 1 \in D(\delta^\ast)$ turns out to be crucial.

\subsection{Voiculescu's formulas for $\delta^\ast$}

In \cite{Voi-Entropy-V}, Voiculescu deduced formulas for the adjoint operator $\delta^\ast$ of a non-commutative derivation $\delta$ under the assumption that $1\otimes 1 \in D(\delta^\ast)$. This was shown in \cite{Voi-Entropy-V} only in the case of the non-commutative derivatives that are defined on the algebra of finitely many generators, but it was noted and worked out in \cite{Voi-Entropy-VI} that the same arguments apply in more general situations. Although this is commonly accepted as a well-known fact, we give here for reader's convenience a complete introduction to this circle of ideas, since these beautiful results are of great importance for our considerations.

For the rest of this subsection, let $\delta: M \supseteq D(\delta) \to L^2(M,\tau) \otimes L^2(M,\tau)$ be some fixed non-commutative derivation in the sense of Definition \ref{def:noncommutative-derivation}, viewed as an unbounded linear operator
$$\delta:\ L^2(M,\tau) \supset D(\delta) \to L^2(M,\tau) \otimes L^2(M,\tau).$$
Following Voiculescu's strategy, we begin by deducing some very useful product rules for its adjoint operator $\delta^\ast$.

Clearly, we may extend the involution $\ast$ on $M$ from $M$ uniquely to an involution on $L^2(M,\tau)$, and the canonical involution $\ast$ on $M \otimes M$ from $M\otimes M$ uniquely to an involution $L^2(M,\tau) \otimes L^2(M,\tau)$. Consequently, 
\begin{equation}\label{eq:inner-product-involution-1}
\langle x,y\rangle = \langle y^\ast, x^\ast\rangle
\end{equation}
holds for all $x,y \in L^2(M,\tau)$.

\begin{lemma}\label{lem:Voi-lemma-0}
Let $u \in D(\delta^\ast) \cap (M \odot M)$ and $x\in D(\delta)$ be given. Then
\begin{equation}\label{eq:Voi-formula-0}
\begin{aligned}
\delta^\ast(x \cdot u) &= x \delta^\ast(u) - (\tau\otimes\id)(u \sharp \delta(x^\ast)^\ast),\\
\delta^\ast(u \cdot x) &= \delta^\ast(u) x - (\id\otimes\tau)(u \sharp \delta(x^\ast)^\ast),
\end{aligned}
\end{equation}
where $\sharp$ is defined according to Remark \ref{rem:sharp} with respect to the $M$-$M$-bimodule $L^2(M,\tau) \otimes L^2(M,\tau)$. In particular, for any $u\in D(\delta^\ast) \cap (M \odot M)$, we have
$$\{x_1 \cdot u \cdot x_2|\ x_1,x_2\in D(\delta)\} \subseteq D(\delta^\ast).$$
\end{lemma}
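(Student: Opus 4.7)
The strategy is to verify directly the defining property of $\delta^\ast$: for the first formula I would show that for every $y \in D(\delta)$,
$$\langle \delta(y), x \cdot u\rangle_{L^2(M\otimes M,\tau\otimes\tau)} = \langle y, x\delta^\ast(u) - (\tau\otimes\id)(u \sharp \delta(x^\ast)^\ast)\rangle_{L^2(M,\tau)},$$
with the right-hand side living in $L^2(M,\tau)$. Once this is established for every $y\in D(\delta)$, the very definition of the adjoint operator gives $x\cdot u \in D(\delta^\ast)$ together with the asserted formula.

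The key input is the Leibniz rule applied to $x^\ast y \in D(\delta)$, which yields $\delta(x^\ast y) = \delta(x^\ast) \cdot y + x^\ast \cdot \delta(y)$. Pairing both sides with $u$ and exploiting that $u \in D(\delta^\ast)$ together with traciality converts the left-hand side into $\langle x^\ast y, \delta^\ast(u)\rangle = \langle y, x\delta^\ast(u)\rangle$, while the second summand on the right becomes $\langle \delta(y), x \cdot u\rangle$ after shifting $x^\ast$ via the natural bimodule adjoint. Rearranging, the task reduces to the identity
$$\langle \delta(x^\ast) \cdot y, u\rangle = \langle y, (\tau \otimes \id)(u \sharp \delta(x^\ast)^\ast)\rangle.$$
I would establish this first on a pair of elementary tensors $u = a \otimes b$ and $v = c \otimes d$ (in place of $\delta(x^\ast)$): a direct calculation using the bimodule action, the inner product on $L^2\otimes L^2$, and the tracial property of $\tau$ gives
$$\langle (c \otimes d) \cdot y, a \otimes b\rangle = \tau(a^\ast c)\, \tau(b^\ast d y) = \langle y, (\tau\otimes\id)((a \otimes b) \sharp (c^\ast \otimes d^\ast))\rangle.$$
Since $u \in M \odot M$ is a finite sum of simple tensors of bounded elements, the two maps $v \mapsto v \cdot y$ and $v \mapsto (\tau \otimes \id)(u \sharp v^\ast)$ are continuous from $L^2(M,\tau) \otimes L^2(M,\tau)$ to $L^2(M,\tau)$, so the identity extends by bilinearity and continuity to arbitrary $v \in L^2(M,\tau)\otimes L^2(M,\tau)$ and in particular to $v = \delta(x^\ast)$.

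The second formula is proved by the completely symmetric argument, applying the Leibniz rule to $y x^\ast$ instead and replacing $(\tau \otimes \id)$ by $(\id \otimes \tau)$. The final inclusion statement is then immediate by iteration: given $x_1, x_2 \in D(\delta)$, left multiplication by $x_1$ preserves $M \odot M$, so $x_1 \cdot u$ is again in $M \odot M$, and it lies in $D(\delta^\ast)$ by the first formula; applying the second formula to $(x_1 \cdot u) \cdot x_2$ concludes. The principal obstacle is bookkeeping rather than substantive: $\delta(x^\ast)$ is in general only an element of $L^2(M,\tau) \otimes L^2(M,\tau)$ rather than of $M \odot M$, so the clean algebraic identity valid on simple tensors has to be promoted to arbitrary $\delta(x^\ast)$ by a continuity argument, and the continuity is available precisely because $u$ itself is a bounded tensor in $M \odot M$.
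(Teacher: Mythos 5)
Your proposal is correct and follows essentially the same route as the paper: both arguments test $x\cdot u$ against an arbitrary $y\in D(\delta)$, use the Leibniz rule on $x^\ast y$ together with the adjoint relation for $u$, and reduce everything to the identity $\langle \delta(x^\ast)\cdot y,\,u\rangle=\langle y,\,(\tau\otimes\id)(u\sharp\delta(x^\ast)^\ast)\rangle$. The only difference is that you make explicit the verification of this identity on elementary tensors and its extension by continuity (using that $u\in M\odot M$ has bounded legs), a step the paper performs implicitly in a single line of its chain of equalities.
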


\begin{proof}
Let $u\in D(\delta^\ast) \cap (M \odot M)$ and $x\in D(\delta)$ be given. For any $y\in D(\delta)$, we observe that
\begin{align*}
\langle \delta(y), x\cdot u\rangle
&= \langle x^\ast \cdot \delta(y), u \rangle\\
&= \langle \delta(x^\ast y), u \rangle - \langle \delta(x^\ast) \cdot y, u \rangle\\
&= \langle x^\ast y, \delta^\ast(u) \rangle - \langle 1 \otimes y, u\sharp \delta(x^\ast)^\ast\rangle\\
&= \langle y, x \delta^\ast(u) \rangle - \langle y, (\tau\otimes\id)(u\sharp \delta(x^\ast)^\ast)\rangle\\
&= \langle y, x \delta^\ast(u) - (\tau\otimes\id)(u\sharp \delta(x^\ast)^\ast)\rangle,
\end{align*}
from which $x\cdot u\in D(\delta^\ast)$ and the first formula in \eqref{eq:Voi-formula-0} follows. Analogously, we obtain by
\begin{align*}
\langle \delta(y), u\cdot x\rangle
&= \langle \delta(y) \cdot x^\ast, u \rangle\\
&= \langle \delta(y x^\ast), u \rangle - \langle y \cdot \delta(x^\ast), u \rangle\\
&= \langle y x^\ast, \delta^\ast(u) \rangle - \langle y \otimes 1, u\sharp \delta(x^\ast)^\ast\rangle\\
&= \langle y, \delta^\ast(u) x \rangle - \langle y, (\id\otimes\tau)(u\sharp \delta(x^\ast)^\ast)\rangle\\
&= \langle y, \delta^\ast(u) x - (\id\otimes\tau)(u\sharp \delta(x^\ast)^\ast)\rangle
\end{align*}
that $u\cdot x \in D(\delta^\ast)$ and the second formula in \eqref{eq:Voi-formula-0}. A combination of both observations immediately yields the stated inclusion
$$\{x_1 \cdot u \cdot x_2|\ x_1,x_2\in D(\delta)\} \subseteq D(\delta^\ast)$$
for any $u\in D(\delta^\ast) \cap (M \odot M)$.
\end{proof}

In the case $1 \otimes 1 \in D(\delta^\ast)$, Lemma \ref{lem:Voi-lemma-0} yields an explicit formula for $\delta^\ast$ on $D(\delta) \odot D(\delta)$ in terms of $\delta^\ast(1\otimes 1)$ and $\delta$. It takes its nicest form if we require an additional property of $\delta$. In fact, we will assume a certain compatibility between the involution $\ast$ on $M$ and the involution $\dagger$ on $M \otimes M$, where the latter is determined by
$$(x_1 \otimes x_2)^\dagger := x_2^\ast \otimes x_1^\ast.$$
Note that $\dagger$ differs from the canonical involution $\ast$ on $M\otimes M$ only by the flip mapping $\sigma: M\otimes M \to M\otimes M$, i.e., we have $u^\dagger = \sigma(u^\ast)$.

Clearly, we may extend the involution $\dagger$ from $M\otimes M$ uniquely to an involution $L^2(M,\tau) \otimes L^2(M,\tau)$. Accordingly, for all $u,v\in L^2(M,\tau) \otimes L^2(M,\tau)$, it holds true that
\begin{equation}\label{eq:inner-product-involution-2}
\langle u,v\rangle = \langle v^\dagger, u^\dagger\rangle.
\end{equation}

\begin{definition}\label{def:noncommutative-derivation-real}
A non-commutative derivation
$$\delta: M \supseteq D(\delta) \to L^2(M,\tau) \otimes L^2(M,\tau)$$
on $(M,\tau)$ is called \emph{real}, if it satisfies
\begin{equation}\label{eq:noncommutative-derivation-real}
\delta(x)^\dagger = \delta(x^\ast) \qquad\text{for all $x\in D(\delta)$}.
\end{equation}
\end{definition}

Often, condition \eqref{eq:noncommutative-derivation-real} can be weakened. We record this here as a remark.

\begin{remark}\label{rem:real-generators}
We point out that condition \eqref{eq:noncommutative-derivation-real} is automatically satisfied if the unital $\ast$-algebra $D(\delta)$ is generated by self-adjoint elements $x_i$, $i\in I$, for some index set $I\neq\emptyset$, such that $\delta(x_i)^\dagger = \delta(x_i)$ holds for all $i\in I$.

Indeed, if we define $\tilde{\delta}$ with $D(\tilde{\delta}) := D(\delta)$ by
$$\tilde{\delta}:\ D(\tilde{\delta}) \to L^2(M,\tau) \otimes L^2(M,\tau),\ x \mapsto \delta(x^\ast)^\dagger,$$
we can easily check that $\tilde{\delta}$ is a non-commutative derivation as well. Thus, the set
$$D := \{x\in D(\delta)|\ \delta(x) = \tilde{\delta}(x)\}$$
is closed under multiplication, i.e. $x_1,x_2\in D$ implies $x_1 x_2 \in D$. Since it contains the generators $\{x_i|\ i\in I\}$ by assumption, we must have that $D=D(\delta)$, from which it follows by construction that $\delta(x)^\dagger = \delta(x^\ast)$ holds for all $x\in D(\delta)$.
\end{remark}

The following lemma collects some useful formulas for real non-commutative derivations.

\begin{lemma}\label{lem:real-derivations-formulas}
Let $\delta: M \supseteq D(\delta) \to L^2(M,\tau) \otimes L^2(M,\tau)$ be a real non-commutative derivation on $(M,\tau)$, Then, for all $x\in D(\delta)$, it holds true that
\begin{align*}
(\id\otimes\tau)(\delta(x))^\ast &= (\tau\otimes\id)(\delta(x^\ast)),\\
(\tau\otimes\id)(\delta(x))^\ast &= (\id\otimes\tau)(\delta(x^\ast)).
\end{align*}
Furthermore, for any $u\in D(\delta^\ast)$, we have also $u^\dagger \in D(\delta^\ast)$ and it holds true that
$$\delta^\ast(u^\dagger) = \delta^\ast(u)^\ast.$$
In particular, if $1 \otimes 1 \in D(\delta^\ast)$, we have $\delta^\ast(1 \otimes 1) = \delta^\ast(1 \otimes 1)^\ast$.
\end{lemma}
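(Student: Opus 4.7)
The plan is to treat the three assertions in sequence, each reducing to a short algebraic manipulation that uses the reality condition $\delta(x^\ast) = \delta(x)^\dagger$ together with the two sesquilinear identities \eqref{eq:inner-product-involution-1} and \eqref{eq:inner-product-involution-2}. Since the formulas involve only partial traces, the involution $\dagger$, and adjoints, there is no analytic obstacle; the work lies entirely in keeping track of where $\ast$, $\dagger$ and $\sigma$ act. I expect the only mildly subtle point to be the extension from simple tensors to all of $L^2(M,\tau)\otimes L^2(M,\tau)$, which is handled by continuity once the identities are verified on elementary tensors.

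First I would establish a general intertwining identity between $\dagger$ and the partial traces, namely
\begin{equation*}
(\id\otimes\tau)(u)^\ast = (\tau\otimes\id)(u^\dagger) \qquad\text{and}\qquad (\tau\otimes\id)(u)^\ast = (\id\otimes\tau)(u^\dagger)
\end{equation*}
for every $u \in L^2(M,\tau) \otimes L^2(M,\tau)$. On a simple tensor $u = a\otimes b$ both sides reduce to $\tau(b^\ast)\, a^\ast$ and $\tau(a^\ast)\, b^\ast$ respectively, using $\overline{\tau(x)} = \tau(x^\ast)$ and $u^\dagger = b^\ast\otimes a^\ast$; by density and continuity of $\ast$ on $L^2$, both partial traces extend, and the identities propagate to all of $L^2(M,\tau)\otimes L^2(M,\tau)$. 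Applying these to $u = \delta(x)$ and substituting the reality relation $\delta(x)^\dagger = \delta(x^\ast)$ yields the first two displayed equations of the lemma in a single line each.

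Next I would prove the $\dagger$-covariance of $\delta^\ast$. Fix $u \in D(\delta^\ast)$ and let $y \in D(\delta)$ be arbitrary. Since $D(\delta)$ is a $\ast$-subalgebra, $y^\ast \in D(\delta)$ as well, and a chain of manipulations using \eqref{eq:inner-product-involution-2}, then reality of $\delta$, then the defining property of $\delta^\ast$, and finally \eqref{eq:inner-product-involution-1}, gives
\begin{equation*}
\langle \delta(y),\, u^\dagger\rangle
= \langle u,\, \delta(y)^\dagger\rangle
= \langle u,\, \delta(y^\ast)\rangle
= \overline{\langle \delta(y^\ast),\, u\rangle}
= \overline{\langle y^\ast,\, \delta^\ast(u)\rangle}
= \langle y,\, \delta^\ast(u)^\ast\rangle.
\end{equation*}
Since this holds for every $y\in D(\delta)$ and $\delta^\ast(u)^\ast \in L^2(M,\tau)$, the definition of adjoint yields at once that $u^\dagger \in D(\delta^\ast)$ with $\delta^\ast(u^\dagger) = \delta^\ast(u)^\ast$.

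Finally, the special case is immediate: $(1\otimes 1)^\dagger = 1\otimes 1$, so if $1\otimes 1 \in D(\delta^\ast)$ the previous step forces $\delta^\ast(1\otimes 1) = \delta^\ast((1\otimes 1)^\dagger) = \delta^\ast(1\otimes 1)^\ast$, i.e.\ $\delta^\ast(1\otimes 1)$ is self-adjoint.
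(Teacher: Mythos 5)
Your proposal is correct and follows essentially the same route as the paper: the first two identities come from the general intertwining relation $(\id\otimes\tau)(u)^\ast = (\tau\otimes\id)(u^\dagger)$, $(\tau\otimes\id)(u)^\ast = (\id\otimes\tau)(u^\dagger)$ (which the paper records as \eqref{eq:involutions-flip}) combined with reality, and the $\dagger$-covariance of $\delta^\ast$ is obtained by the same one-line inner-product computation against an arbitrary $y\in D(\delta)$, merely written with the arguments in the opposite slots. The only difference is that you spell out the verification of the intertwining relation on elementary tensors, which the paper leaves implicit.
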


\begin{proof}
The first statement is an immediate consequence of the defining property of real derivations, since in general
\begin{equation}\label{eq:involutions-flip}
\begin{aligned}
(\id\otimes\tau)(u)^\ast &= (\tau\otimes\id)(u^\dagger),\\
(\tau\otimes\id)(u)^\ast &= (\id\otimes\tau)(u^\dagger)
\end{aligned}
\end{equation}
holds for each $u\in L^2(M,\tau) \otimes L^2(M,\tau)$. For seeing the second statement, we take any $y\in D(\delta)$ and we observe by using \eqref{eq:inner-product-involution-2} that
$$\langle u^\dagger, \delta(y)\rangle = \langle \delta(y)^\dagger, u\rangle = \langle \delta(y^\ast), u\rangle = \langle y^\ast, \delta^\ast(u)\rangle = \langle \delta^\ast(u)^\ast, y\rangle.$$
This yields $u^\dagger \in D(\delta^\ast)$ with $\delta^\ast(u^\dagger) = \delta^\ast(u)^\ast$, as desired.
\end{proof}

Now, we can combine formulas \eqref{eq:Voi-formula-0} of Lemma \ref{lem:Voi-lemma-0}.

\begin{lemma}\label{lem:Voi-lemma}
If the condition $1 \otimes 1 \in D(\delta^\ast)$ is satisfied, then
$$D(\delta) \odot D(\delta) \subseteq D(\delta^\ast).$$
If $\delta$ is a real derivation in the sense of Definition \ref{def:noncommutative-derivation-real}, then we have more explicitly for all $u \in D(\delta) \odot D(\delta)$ that
\begin{equation}\label{eq:Voi-formula}
\delta^\ast(u) = u \sharp \delta^\ast(1\otimes 1) - m_1 (\id\otimes\tau\otimes\id)(\delta \otimes \id + \id \otimes \delta)(u),
\end{equation}
where, in general, we denote by $m_\eta$ for any $\eta\in L^2(M,\tau)$ the linear mapping $m_\eta: M \odot M \to L^2(M,\tau)$ that is determined by $m_\eta(v) = v \sharp \eta$, so that $m_1$ is nothing else than the multiplication map $m_1(x_1 \otimes x_2) = x_1 x_2$.
\end{lemma}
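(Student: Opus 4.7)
The plan is to derive everything directly from Lemma~\ref{lem:Voi-lemma-0}, which already does the heavy lifting. The hypothesis $1\otimes 1\in D(\delta^\ast)$ feeds Lemma~\ref{lem:Voi-lemma-0} precisely so that its last assertion applies: the first formula of \eqref{eq:Voi-formula-0}, applied with $u=1\otimes 1$ and any $x_1\in D(\delta)$, gives $x_1\otimes 1 = x_1\cdot(1\otimes 1)\in D(\delta^\ast)$. The second formula of \eqref{eq:Voi-formula-0}, applied to this new element and any $x_2\in D(\delta)$, then puts $x_1\otimes x_2=(x_1\otimes 1)\cdot x_2$ into $D(\delta^\ast)$. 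Linearity yields $D(\delta)\odot D(\delta)\subseteq D(\delta^\ast)$.

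For the explicit formula I would iterate Lemma~\ref{lem:Voi-lemma-0} in this same order on an elementary tensor $u=x_1\otimes x_2$. Noting that $(1\otimes 1)\sharp \eta=\eta$ for all $\eta$, the two applications give
\begin{align*}
\delta^\ast(x_1\cdot(1\otimes 1)) &= x_1\,\delta^\ast(1\otimes 1) - (\tau\otimes\id)(\delta(x_1^\ast)^\ast),\\
\delta^\ast((x_1\otimes 1)\cdot x_2) &= \delta^\ast(x_1\otimes 1)\, x_2 - (\id\otimes\tau)\bigl((x_1\otimes 1)\sharp \delta(x_2^\ast)^\ast\bigr).
\end{align*}
Substituting the first identity into the second yields an expression for $\delta^\ast(x_1\otimes x_2)$ with the main term $x_1\,\delta^\ast(1\otimes 1)\,x_2 = (x_1\otimes x_2)\sharp \delta^\ast(1\otimes 1)$ plus two correction terms involving $\delta(x_i^\ast)^\ast$.

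Now the reality assumption enters. Using $\delta(x^\ast)=\delta(x)^\dagger$ and the elementary observation that $(v^\dagger)^\ast=\sigma(v)$ on $L^2(M,\tau)\otimes L^2(M,\tau)$, where $\sigma$ is the flip, one rewrites $\delta(x^\ast)^\ast=\sigma(\delta(x))$. Equivalently, this is \eqref{eq:involutions-flip} combined with Lemma~\ref{lem:real-derivations-formulas}. Under this substitution the first correction term becomes
$$(\tau\otimes\id)(\sigma(\delta(x_1)))\,x_2 = (\id\otimes\tau)(\delta(x_1))\,x_2,$$
and expanding $\delta(x_2)=\sum_j c_j\otimes d_j$, the second correction term becomes
$$(\id\otimes\tau)\bigl((x_1\otimes 1)\sharp \sigma(\delta(x_2))\bigr) = \sum_j x_1 d_j\,\tau(c_j) = x_1\,(\tau\otimes\id)(\delta(x_2)).$$
Comparing with the unfolding of $m_1(\id\otimes\tau\otimes\id)(\delta\otimes\id+\id\otimes\delta)(x_1\otimes x_2)$, these are exactly the two terms appearing in \eqref{eq:Voi-formula} with the correct signs. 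Linearity in $u$ then extends the identity from simple tensors to all of $D(\delta)\odot D(\delta)$.

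The only delicate point is bookkeeping of the flip when converting $\delta(x^\ast)^\ast$ into $\sigma(\delta(x))$ and matching that against the unified expression $m_1(\id\otimes\tau\otimes\id)(\delta\otimes\id+\id\otimes\delta)$; this is precisely where the reality hypothesis is needed and it is the only non-routine step. Everything else is a direct substitution from Lemma~\ref{lem:Voi-lemma-0}.
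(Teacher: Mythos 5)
Your proposal is correct and follows essentially the same route as the paper: both derive the domain inclusion directly from Lemma~\ref{lem:Voi-lemma-0} with $u=1\otimes 1$, and both obtain \eqref{eq:Voi-formula} by applying the two product rules of \eqref{eq:Voi-formula-0} in succession to an elementary tensor and then using reality (via Lemma~\ref{lem:real-derivations-formulas}, equivalently your identity $\delta(x^\ast)^\ast=\sigma(\delta(x))$) to convert the correction terms. The only cosmetic difference is that you factor $x_1\otimes x_2$ as $(x_1\cdot(1\otimes1))\cdot x_2$ while the paper uses $x_1\cdot((1\otimes1)\cdot x_2)$; the bookkeeping of the flip, which you rightly flag as the one delicate point, comes out the same either way.
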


The formula \eqref{eq:Voi-formula} given in Lemma \ref{lem:Voi-lemma} immediately implies that in particular
\begin{equation}\label{eq:Voi-formula-red}
\begin{aligned}
\delta^\ast(x \otimes 1) &= x \delta^\ast(1 \otimes 1) - (\id\otimes\tau)(\delta(x)),\\
\delta^\ast(1 \otimes x) &= \delta^\ast(1 \otimes 1) x - (\tau\otimes\id)(\delta(x)),
\end{aligned}
\end{equation}
which we record here for later reference.

\begin{proof}[Proof of Lemma \ref{lem:Voi-lemma}]
The first assertion, namely that $D(\delta) \odot D(\delta) \subseteq D(\delta^\ast)$ holds under the condition $1 \otimes 1 \in D(\delta^\ast)$, is an immediate consequence of Lemma \ref{lem:Voi-lemma-0}. Note that we did not use for this conclusion the assumption that $\delta$ is real.

For seeing \eqref{eq:Voi-formula}, we proceed as follows. First of all, we note that the validity of \eqref{eq:noncommutative-derivation-real} guarantees according to Lemma \ref{lem:real-derivations-formulas} that
\begin{align*}
(\id\otimes\tau)(\delta(x^\ast)^\ast) &= (\tau\otimes\id)(\delta(x)),\\
(\tau\otimes\id)(\delta(x^\ast)^\ast) &= (\id\otimes\tau)(\delta(x))
\end{align*}
for each $x\in D(\delta)$. Next, for any $u = x_1 \otimes x_2$ with $x_1,x_2\in D(\delta)$, we check by using consecutively both formulas of \eqref{eq:Voi-formula-0} and Lemma \ref{lem:real-derivations-formulas} that
\begin{align*}
\delta^\ast(u)
&= \delta^\ast(x_1 \cdot (1 \otimes x_2))\\
&= x_1 \delta^\ast((1 \otimes 1) \cdot x_2) - (\tau\otimes\id)((1\otimes x_2) \sharp \delta(x_1^\ast)^\ast)\\
&= x_1 \delta^\ast(1 \otimes 1) x_2 - x_1 (\id\otimes\tau)(\delta(x_2^\ast)^\ast)- (\tau\otimes\id)((1\otimes x_2) \sharp \delta(x_1^\ast)^\ast)\\
&= u\sharp \delta^\ast(1 \otimes 1) - x_1 (\tau\otimes\id)(\delta(x_2)) - (\id\otimes\tau)(\delta(x_1)) x_2\\
&= u \sharp \delta^\ast(1\otimes 1) - m_1 (\id\otimes\tau\otimes\id)(\delta \otimes \id + \id \otimes \delta)(u).
\end{align*}
By linearity, this shows \eqref{eq:Voi-formula} for all $u\in D(\delta) \odot D(\delta)$. This concludes the proof.
\end{proof}

\subsection{Dabrowski's inequalities}

Based on Voiculescu's formulas, Dabrowski deduced in \cite{Dab-Gamma} a collection of interesting inequalities concerning the boundedness of the non-commutative derivatives, which are very surprising from a classical point of view. In \cite{Dab-free_stochastic_PDE}, he noted that the same arguments also apply in a more general setting. More precisely, he observed (without carrying out the proof) that his result remain valid for any real derivation, which satisfies in addition the so-called coassociativity relation.

\begin{definition}\label{def:noncommutative-derivation-coassociative}
Let $\delta: M \supseteq D(\delta) \to L^2(M,\tau) \otimes L^2(M,\tau)$ be a non-commutative derivation on $(M,\tau)$. We say that $\delta$ satisfies the \emph{coassociativity relation},
\begin{itemize}
 \item if $\delta$ takes its values in $D(\delta) \odot D(\delta)$,
 \item and if $\delta$ has the property that
       \begin{equation}\label{eq:coassociativity}
       (\delta \otimes \id) \circ \delta = (\id \otimes \delta) \circ \delta.
       \end{equation}
\end{itemize}
\end{definition}

For reader's convenience, we state here those of Dabrowski's formulas, which we need for our purposes. Since it is instructive, we also include a slightly simplified proof thereof.

\begin{theorem}\label{thm:Dab-lemma}
Let $\delta: M \supseteq D(\delta) \to L^2(M,\tau) \otimes L^2(M,\tau)$ be a non-commutative derivation on a tracial $W^\ast$-probability space $(M,\tau)$, which
\begin{itemize}
 \item is real in the sense of Definition \ref{def:noncommutative-derivation-real}
 \item and satisfies the coassociativity relation as formulated in Definition \ref{def:noncommutative-derivation-coassociative}.
\end{itemize}
If the condition $1 \otimes 1 \in D(\delta^\ast)$ is satisfied, we have for all $x\in D(\delta)$ that
\begin{equation}\label{eq:Dab-inequalities-1}
\begin{aligned}
\|\delta^\ast(x\otimes 1)\|_2 & \leq \|\delta^\ast(1\otimes1)\|_2 \|x\|\\
\|\delta^\ast(1\otimes x)\|_2 & \leq \|\delta^\ast(1\otimes1)\|_2 \|x\|
\end{aligned}
\end{equation}
and
\begin{equation}\label{eq:Dab-inequalities-2}
\begin{aligned}
\|(\id\otimes\tau)(\delta(x))\|_2 &\leq 2 \|\delta^\ast(1\otimes1)\|_2 \|x\|\\
\|(\tau\otimes\id)(\delta(x))\|_2 &\leq 2 \|\delta^\ast(1\otimes1)\|_2 \|x\|
\end{aligned}
\end{equation}
\end{theorem}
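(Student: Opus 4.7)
The plan is to prove the theorem in three steps: two short reductions followed by the main estimate.

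\textbf{Reductions.} First, both inequalities in \eqref{eq:Dab-inequalities-2} will follow from \eqref{eq:Dab-inequalities-1}. Writing $\xi := \delta^\ast(1 \otimes 1)$, the formulas in \eqref{eq:Voi-formula-red} give
$$(\id \otimes \tau)(\delta(x)) = x\xi - \delta^\ast(x \otimes 1) \quad \text{and} \quad (\tau \otimes \id)(\delta(x)) = \xi x - \delta^\ast(1 \otimes x);$$
combining these with the triangle inequality and the tracial bound $\|x\xi\|_2, \|\xi x\|_2 \le \|x\|\|\xi\|_2$ produces the factor $2$ in \eqref{eq:Dab-inequalities-2} as soon as \eqref{eq:Dab-inequalities-1} is known. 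Second, within \eqref{eq:Dab-inequalities-1}, the estimate on $\delta^\ast(1 \otimes x)$ reduces to the one on $\delta^\ast(x \otimes 1)$: since $(x^\ast \otimes 1)^\dagger = 1 \otimes x$, Lemma \ref{lem:real-derivations-formulas} gives $\delta^\ast(1 \otimes x) = \delta^\ast(x^\ast \otimes 1)^\ast$, whose $L^2$-norm equals $\|\delta^\ast(x^\ast \otimes 1)\|_2$, while $\|x^\ast\| = \|x\|$.

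\textbf{Core inequality.} It then remains to prove $\|\delta^\ast(x \otimes 1)\|_2 \le \|\xi\|_2\|x\|$ for all $x \in D(\delta)$. I would actually aim at the cleaner stronger statement $\|\delta^\ast(x \otimes 1)\|_2 \le \|x\xi\|_2$, from which the theorem follows immediately via $\|x\xi\|_2 \le \|x\|\|\xi\|_2$. Writing $T(x) := \delta^\ast(x \otimes 1)$ and $\eta_x := (\id \otimes \tau)(\delta(x)) = x\xi - T(x)$, this inequality rearranges to $\|\eta_x\|_2^2 \le 2\,\mathrm{Re}\langle x\xi, \eta_x\rangle$. Following Dabrowski, I would prove it by simultaneously exploiting all three standing hypotheses ($1 \otimes 1 \in D(\delta^\ast)$, reality, and coassociativity). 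Coassociativity makes sense of $(\delta \otimes \id)\delta(x) = (\id \otimes \delta)\delta(x)$ as an element of $D(\delta) \odot D(\delta) \odot D(\delta)$; applying $(\id \otimes \tau \otimes \id)$ and substituting \eqref{eq:Voi-formula-red} on both sides yields an identity relating the left and right versions of $\delta^\ast$ to pure multiplication operations by $\xi$, where reality is used to identify $\delta^\ast(1 \otimes y) = \delta^\ast(y^\ast \otimes 1)^\ast$. This identity, combined with a second computation of $\delta^\ast(x^\ast x \otimes 1)$ carried out via Lemma \ref{lem:Voi-lemma-0} applied to $x^\ast \cdot (x \otimes 1) \in D(\delta^\ast)$ and compared with Voiculescu's direct expansion of the same quantity, should force the cross terms to telescope and reduce the estimate to controlling $\tau(\xi\xi x^\ast x) = \|x\xi\|_2^2$.

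\textbf{Where the difficulty lies.} The non-obvious point, and what makes Dabrowski's inequalities surprising from a classical viewpoint, is precisely that the bound involves the operator norm $\|x\|$ rather than the $L^2$-norm $\|x\|_2$. A direct duality argument, $\|\delta^\ast(x \otimes 1)\|_2 = \sup_{\|z\|_2 \le 1} |\langle x, (\id \otimes \tau)(\delta(z))\rangle|$ followed by Cauchy--Schwarz, yields only a bound involving $\|x\|_2$ together with an a priori uncontrolled constant depending on $\delta(z)$. The hard step will be the cancellation of cross terms via coassociativity and reality acting in concert; dropping either hypothesis breaks the argument. Once $\|T(x)\|_2 \le \|x\xi\|_2$ is established, the remaining three inequalities in the theorem fall out from the two reductions above.
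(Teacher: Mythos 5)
Your two reductions are exactly the ones used in the paper, and your diagnosis of where the difficulty lies (getting $\|x\|$ rather than $\|x\|_2$ on the right-hand side) is correct. However, the core inequality you propose to prove, $\|\delta^\ast(x\otimes 1)\|_2 \leq \|x\,\delta^\ast(1\otimes1)\|_2$, is \emph{false}, so the plan cannot be completed as stated. A counterexample already occurs for the directional gradient $\delta=\nabla^h$ with $\|h\|_{L^2(\R_+)}=1$, where $\xi:=\delta^\ast(1\otimes1)=X(h)=:s$ is a standard semicircular: take $x=4-s^2$. Then $(\id\otimes\tau)(\delta(x))=-s$, so by \eqref{eq:Voi-formula-red} one gets $\delta^\ast(x\otimes 1)=(4-s^2)s+s=5s-s^3$, whence $\|\delta^\ast(x\otimes1)\|_2^2=25\tau(s^2)-10\tau(s^4)+\tau(s^6)=25-20+5=10$, while $\|x\xi\|_2^2=\|4s-s^3\|_2^2=16-16+5=5$. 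So $\|\delta^\ast(x\otimes1)\|_2=\sqrt{10}>\sqrt{5}=\|x\xi\|_2$ (and indeed $\sqrt{10}\leq\|\xi\|_2\|x\|=4$, consistent with the theorem). Equivalently, in the notation of the paper's identity \eqref{eq:T-formula-1}, $\|T(x)\|_2^2=\|x\xi\|_2^2-\langle\delta(x^\ast x),1\otimes\xi\rangle$, and the correction term $\langle\delta(x^\ast x),1\otimes\xi\rangle=\tau((s^2-1)(4-s^2)^2)=-5$ is negative here, so the cross terms you hope will "telescope" in your favour in fact go the wrong way.

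What the paper actually proves from coassociativity and reality is not a pointwise domination by $\|x\xi\|_2$ but the identity $\|T(x)\|_2^2=\langle T(x^\ast x),\delta^\ast(1\otimes1)\rangle$ (Corollary \ref{cor:T-formula}), which by Cauchy--Schwarz gives only the \emph{self-referential} bound $\|T(x)\|_2^2\leq \|\xi\|_2\,\|T(x^\ast x)\|_2$. The operator norm then enters through a separate iteration argument (Lemma \ref{lem:iterative-norm-bound}): one applies the self-referential bound along the sequence $z_n=(x^\ast x)^{2^n}$ and closes the loop with the a priori growth estimate $\limsup_m\|T(x^m)\|_2^{1/m}\leq\|x\|$, which follows from \eqref{eq:Voi-formula-red} together with $\|(\partial P)(x)\|_\pi\leq m\|x\|^{m-1}$ for $P(x)=x^m$. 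This iteration is the missing idea in your proposal; without it (or some substitute), neither the true identity nor any correct one-step estimate yields the factor $\|x\|$.
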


Before proceeding with to the proof of Theorem \ref{thm:Dab-lemma}, we record here the following formula for its later use therein.

\begin{lemma}\label{lem:coassociativity-red}
In the situation of Theorem \ref{thm:Dab-lemma}, let $x\in D(\delta)$ be given and put
$$y := (\id\otimes\tau)(\delta(x)).$$
Then $y\in D(\delta)$ holds and we have that
$$(\id\otimes\tau)(\delta(y)) = (\id\otimes \langle \cdot, \delta^\ast(1 \otimes 1)\rangle)(\delta(x)).$$
\end{lemma}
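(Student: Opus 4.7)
The plan is to use coassociativity to move the inner $\delta$ past the partial trace defining $y$, and then to recognize the resulting $\tau\otimes\tau$-contraction as the Hilbert-space adjoint relation applied at $1\otimes 1$.

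By coassociativity, $\delta(x)\in D(\delta)\odot D(\delta)$, so I may fix a finite elementary-tensor expansion $\delta(x)=\sum_{i=1}^{N} a_i\otimes b_i$ with $a_i,b_i\in D(\delta)$. Then $y=\sum_i \tau(b_i)\,a_i$ is a finite linear combination of elements of the $\ast$-algebra $D(\delta)$, hence $y\in D(\delta)$, and linearity of $\delta$ gives
\[
\delta(y)=\sum_i \tau(b_i)\,\delta(a_i)=(\id\otimes\id\otimes\tau)\bigl((\delta\otimes\id)(\delta(x))\bigr).
\]
The coassociativity relation $(\delta\otimes\id)\circ\delta=(\id\otimes\delta)\circ\delta$ rewrites the right-hand side as $(\id\otimes\id\otimes\tau)\bigl(\sum_i a_i\otimes\delta(b_i)\bigr)=\sum_i a_i\otimes(\id\otimes\tau)(\delta(b_i))$. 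Applying one more $\id\otimes\tau$ therefore yields
\[
(\id\otimes\tau)(\delta(y))=\sum_i a_i\,(\tau\otimes\tau)(\delta(b_i)).
\]

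The proof is finished by the adjoint identity: since $1\otimes 1\in D(\delta^\ast)$, one has for every $b\in D(\delta)$ that
\[
(\tau\otimes\tau)(\delta(b))=\langle \delta(b),\,1\otimes 1\rangle=\langle b,\,\delta^\ast(1\otimes 1)\rangle,
\]
the first equality being the very definition of the inner product on $L^2(M,\tau)\otimes L^2(M,\tau)$ and the second the defining relation of the adjoint operator. Substituting this back produces exactly $(\id\otimes\tau)(\delta(y))=\bigl(\id\otimes\langle\cdot,\delta^\ast(1\otimes 1)\rangle\bigr)(\delta(x))$. There is no genuine obstacle: the whole argument is coassociativity plus the adjoint relation, and the only bookkeeping concern is to keep track of which of the three tensor legs the partial traces act on, which is transparent once $\delta(x)$ is expanded as a finite sum of simple tensors from $D(\delta)\odot D(\delta)$.
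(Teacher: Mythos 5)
Your proof is correct and follows essentially the same route as the paper: coassociativity is used to replace $(\delta\otimes\id)\circ\delta$ by $(\id\otimes\delta)\circ\delta$, and the identity $(\tau\otimes\tau)\circ\delta=\langle\cdot,\delta^\ast(1\otimes 1)\rangle$ (i.e.\ the adjoint relation at $1\otimes 1$) finishes the computation. The only difference is cosmetic: you expand $\delta(x)$ into elementary tensors, while the paper carries out the same manipulations directly in the operator notation $(\id\otimes\id\otimes\tau)$, $(\id\otimes\tau\otimes\tau)$.
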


\begin{proof}
Since $\delta$ is assumed to satisfy the coassociativity relation, we know by Definition \ref{def:noncommutative-derivation-coassociative} that in particular $D(\delta) \odot D(\delta)$ holds, which gives $y\in D(\delta)$. Furthermore, according to the coassociativity relation formulated in \eqref{eq:coassociativity}, we see that
\begin{align*}
\delta(y) &= (\id\otimes\id\otimes\tau)\big((\delta\otimes\id)(\delta(x))\big)\\
          &= (\id\otimes\id\otimes\tau)\big((\id\otimes\delta)(\delta(x))\big)
\end{align*}
holds. Since we have on $D(\delta)$ the identity $(\tau\otimes\tau)\circ\delta = \langle \cdot, \delta^\ast(1 \otimes 1)\rangle$, we get
\begin{align*}
(\id\otimes\tau)(\delta(y)) &= (\id\otimes\tau\otimes\tau)\big((\id\otimes\delta)(\delta(x))\big)\\
                            &= \big(\id\otimes((\tau\otimes\tau)\circ\delta)\big)(\delta(x))\\
                            &= \big(\id\otimes \langle \cdot, \delta^\ast(1 \otimes 1)\rangle \big)(\delta(x)),
\end{align*}
which is the desired formula.
\end{proof}

Additionally, the proof of Theorem \ref{thm:Dab-lemma} will be based on the following observation.

\begin{lemma}\label{lem:iterative-norm-bound}
Let $(M,\tau)$ be a $W^\ast$-probability space and let $T: D(T) \rightarrow M$ be a linear operator on a unital $\ast$-subalgebra $D(T)$ of $M$. Assume that the following conditions are satisfied:
\begin{itemize}
 \item[(i)] There exists a constant $C>0$ such that
$$\|T(x)\|^2_2 \leq C \|T(x^\ast x)\|_2 \qquad\text{for all $x\in D(T)$}.$$
 \item[(ii)] For each $x\in D(T)$, we have that
$$\limsup_{m\to\infty} \|T(x^m)\|^\frac{1}{m}_2 \leq \|x\|.$$
\end{itemize}
Then $T$ satisfies $\|T(x)\|_2 \leq C \|x\|$ for all $x\in D(T)$.
\end{lemma}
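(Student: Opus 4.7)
The strategy is to iterate (i) against the self-adjoint element $y=x^\ast x$ and then use the spectral-radius type bound (ii) along the dyadic subsequence $m=2^k$.

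First, fix $x\in D(T)$ and set $y:=x^\ast x\in D(T)$. Note that $y$ is self-adjoint, so for every $k\geq 0$ the power $y^{2^k}$ is again a self-adjoint element of $D(T)$, and in particular $(y^{2^k})^\ast(y^{2^k})=y^{2^{k+1}}$. Applying (i) once to $x$ gives
\[
\|T(x)\|_2^2\ \leq\ C\,\|T(y)\|_2,
\]
and applying (i) to each $y^{2^k}$ gives
\[
\|T(y^{2^k})\|_2^2\ \leq\ C\,\|T(y^{2^{k+1}})\|_2,
\qquad\text{i.e.}\qquad
\|T(y^{2^k})\|_2\ \leq\ C^{1/2}\,\|T(y^{2^{k+1}})\|_2^{1/2}.
\]

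Next, chain these inequalities. A straightforward induction using a geometric sum of exponents yields, for every $k\geq 1$,
\[
\|T(y)\|_2\ \leq\ C^{\,1/2+1/4+\cdots+1/2^k}\,\|T(y^{2^k})\|_2^{1/2^k}\ =\ C^{\,1-1/2^k}\,\|T(y^{2^k})\|_2^{1/2^k}.
\]
Combined with the opening step, this gives
\[
\|T(x)\|_2^2\ \leq\ C^{\,2-1/2^k}\,\|T(y^{2^k})\|_2^{1/2^k}\qquad\text{for all }k\geq 1.
\]

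Finally, let $k\to\infty$. Condition (ii) applied to $y\in D(T)$ along the subsequence $m=2^k$ gives
\[
\limsup_{k\to\infty}\|T(y^{2^k})\|_2^{1/2^k}\ \leq\ \limsup_{m\to\infty}\|T(y^m)\|_2^{1/m}\ \leq\ \|y\|\ =\ \|x^\ast x\|\ =\ \|x\|^2.
\]
Since $C^{\,2-1/2^k}\to C^2$, we conclude $\|T(x)\|_2^2\leq C^2\|x\|^2$, which is exactly the desired bound $\|T(x)\|_2\leq C\|x\|$.

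There is no real obstacle here: the only subtle points are making sure the geometric series of exponents telescopes to a bounded total ($\sum_{j\geq 1}2^{-j}=1$, which is what prevents the constant from blowing up) and observing that the $\limsup$ over all $m$ in (ii) dominates the $\limsup$ along the dyadic subsequence. Both are immediate once the iteration is set up.
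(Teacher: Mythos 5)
Your proof is correct and follows essentially the same route as the paper: iterate hypothesis (i) on the dyadic powers $z_k=(x^\ast x)^{2^k}$, telescope the geometric series of exponents to get the constant $C^{1-1/2^k}$, and pass to the limit using (ii) applied to $x^\ast x$ along the subsequence $m=2^k$. The only cosmetic difference is that you apply (i) to $x$ at the start rather than at the end, which changes nothing.
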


\begin{proof}
Let $x\in D(T)$ be given. For each $n\in\N_0$, we define $z_n := (x^\ast x)^{2^n} \in D(T)$. By assumption (i), we see that
$$\|T(z_n)\|^2_2 \leq C \|T(z_{n+1})\|_2 \qquad\text{for all $n\in\N_0$},$$
which yields inductively
$$\|T(z_0)\|_2 \leq C^{\frac{1}{2} + \dots + \frac{1}{2^n}} \|T(z_n)\|^{\frac{1}{2^n}}_2 \qquad\text{for all $n\in\N_0$}.$$
Since
$$\limsup_{n\to\infty} \|T(z_n)\|^{\frac{1}{2^n}}_2 = \limsup_{n\to\infty} \|T((x^\ast x)^{2^n})\|^{\frac{1}{2^n}}_2 \leq \|x^\ast x\| = \|x\|^2$$
due to (ii), it follows that
$$\|T(z_0)\|_2 \leq C \|x\|^2.$$
By using (ii) once again, we obtain
$$\|T(x)\|^2_2 \leq C \|T(z_0)\|_2 \leq C^2 \|x\|^2$$
and hence $\|T(x)\|_2 \leq C \|x\|$, as stated.
\end{proof}

\begin{proof}[Proof of Theorem \ref{thm:Dab-lemma}]
First of all, we note that it suffices to prove \eqref{eq:Dab-inequalities-1}, since \eqref{eq:Dab-inequalities-2} follows from \eqref{eq:Dab-inequalities-1} and Voiculescu's formula \eqref{eq:Voi-formula-red} by an application of the triangle inequality.

For proving \eqref{eq:Dab-inequalities-1}, we want to use Lemma \ref{lem:iterative-norm-bound}. We consider the linear mapping $T: D(T) \to M$ on $D(T) := D(\delta)$ given by
$$T(x) := \delta^\ast(x \otimes 1) \qquad\text{for all $x\in D(\delta)$}.$$
Since Lemma \ref{lem:Voi-lemma} guarantees $D(\delta) \odot D(\delta) \subseteq D(\delta^\ast)$, the mapping $T$ is indeed well-defined.

Now, we just have to follow the receipt given in Lemma \ref{lem:iterative-norm-bound}.

(i) For any given $x\in D(\delta)$, we have to compare $\|T(x^\ast x)\|_2$ and $\|T(x)\|_2$. In fact, we will show that
\begin{equation}\label{eq:T-formula}
\|T(x)\|^2_2 = \langle T(x^\ast x), \delta^\ast(1\otimes 1)\rangle
\end{equation}
from which
$$\|T(x)\|^2_2 \leq \|\delta^\ast(1\otimes 1)\|_2 \|T(x^\ast x)\|_2$$
immediately follows by an application of the Cauchy-Schwarz inequality.

Formula \eqref{eq:T-formula} can be shown as follows. Let $x\in D(\delta)$ be given and put $y := (\id\otimes\tau)(\delta(x))$. Since
$$y^\ast = (\id\otimes\tau)(\delta(x))^\ast = (\tau\otimes\id)(\delta(x^\ast))$$
according to Lemma \ref{lem:real-derivations-formulas}, we may observe by using in turn Lemma \ref{lem:coassociativity-red} and Lemma \ref{lem:real-derivations-formulas} in the version \eqref{eq:Voi-formula-red} that
\begin{align*}
\|y\|_2^2
&= \langle y, (\id\otimes\tau)(\delta(x))\rangle\\
&= \langle y \otimes 1, \delta(x)\rangle\\
&= \langle \delta^\ast(y\otimes 1), x\rangle\\
&= \langle y \delta^\ast(1\otimes 1), x\rangle - \langle (\id\otimes\tau)(\delta(y)), x\rangle\\
&= \langle \delta^\ast(1\otimes 1) x^\ast, y^\ast\rangle - \langle (\id\otimes\langle \cdot, \delta^\ast(1 \otimes 1)\rangle)(\delta(x)), x\rangle\\
&= \langle 1\otimes \delta^\ast(1\otimes 1) x^\ast, \delta(x^\ast)\rangle - \langle \delta(x), x \otimes \delta^\ast(1 \otimes 1)\rangle.
\end{align*}
Because moreover 
\begin{align*}
\langle \delta(x), & x \otimes \delta^\ast(1 \otimes 1)\rangle\\
&= \langle x^\ast \cdot \delta(x), 1 \otimes \delta^\ast(1 \otimes 1)\rangle\\
&= \langle \delta(x^\ast x), 1 \otimes \delta^\ast(1 \otimes 1)\rangle - \langle \delta(x^\ast) \cdot x, 1 \otimes \delta^\ast(1 \otimes 1)\rangle\\
&= \langle \delta(x^\ast x), 1 \otimes \delta^\ast(1 \otimes 1)\rangle - \langle \delta(x^\ast), 1 \otimes \delta^\ast(1 \otimes 1)x^\ast\rangle,
\end{align*}
we may conclude
$$\|y\|_2^2 = 2\Re\big( \langle 1\otimes \delta^\ast(1\otimes 1) x^\ast, \delta(x^\ast)\rangle \big) - \langle \delta(x^\ast x), 1 \otimes \delta^\ast(1 \otimes 1)\rangle.$$

Furthermore, since $T(x) = x\delta^\ast(1\otimes1) - y$ due to \eqref{eq:Voi-formula-red}, we get that
\begin{align*}
\|T(x)\|^2_2 &= \langle x\delta^\ast(1\otimes 1) - y, x\delta^\ast(1\otimes 1) - y\rangle\\
             &= \|x\delta^\ast(1\otimes 1)\|_2^2 + \|y\|^2_2 - 2 \Re\big( \langle x\delta^\ast(1\otimes 1), y\rangle\big)\\
             &= \|x\delta^\ast(1\otimes 1)\|_2^2 + \|y\|^2_2 - 2\Re\big( \langle x\delta^\ast(1\otimes 1) \otimes 1, \delta(x)\rangle\big).
\end{align*}

We check now
\begin{align*}
\langle x\delta^\ast(1\otimes 1)& \otimes 1, \delta(x)\rangle\\
&= \langle \delta^\ast(1\otimes 1), (\id\otimes\tau)(x^\ast\cdot \delta(x))\rangle\\
&= \langle (\id\otimes\tau)(x^\ast \cdot \delta(x))^\ast, \delta^\ast(1\otimes1)\rangle & \text{(by \eqref{eq:inner-product-involution-1})}\\
&= \langle (\id\otimes\tau)(\delta(x))^\ast x, \delta^\ast(1\otimes1)\rangle\\
&= \langle (\tau\otimes\id)(\delta(x^\ast)) x, \delta^\ast(1\otimes1)\rangle & \text{(by Lemma \ref{lem:real-derivations-formulas})}\\
&= \langle (\tau\otimes\id)(\delta(x^\ast)), \delta^\ast(1\otimes1) x^\ast\rangle\\
&= \langle \delta(x^\ast), 1\otimes \delta^\ast(1\otimes1) x^\ast\rangle,
\end{align*}
so that
$$\Re\big(\langle x\delta^\ast(1\otimes 1) \otimes 1, \delta(x)\rangle\big) = \Re\big(\langle 1\otimes \delta^\ast(1\otimes1) x^\ast, \delta(x^\ast)\rangle\big).$$

A combination of our previous computations leads us to
\begin{equation}\label{eq:T-formula-1}
\|T(x)\|^2_2 = \|x\delta^\ast(1\otimes 1)\|_2^2 - \langle \delta(x^\ast x), 1 \otimes \delta^\ast(1 \otimes 1)\rangle
\end{equation}

Furthermore, due to \eqref{eq:Voi-formula-red}, we have
$$T(x^\ast x) = x^\ast x \delta^\ast(1 \otimes 1) - (\id\otimes\tau)(\delta(x^\ast x)),$$ 
and hence
\begin{equation}\label{eq:T-formula-2}
\langle T(x^\ast x), \delta^\ast(1\otimes 1)\rangle = \|x \delta^\ast(1\otimes 1)\|_2^2 - \langle \delta(x^\ast x), \delta^\ast(1\otimes 1) \otimes 1\rangle.
\end{equation}

Since \eqref{eq:T-formula-1} implies that $\langle \delta(x^\ast x), 1\otimes \delta^\ast(1\otimes 1)\rangle$ must be real, we get by using \eqref{eq:inner-product-involution-2}, Lemma \ref{lem:real-derivations-formulas}, and \eqref{eq:noncommutative-derivation-real} that
$$\langle \delta(x^\ast x), \delta^\ast(1\otimes 1) \otimes 1\rangle = \langle 1 \otimes \delta^\ast(1\otimes 1), \delta(x^\ast x)\rangle = \langle  \delta(x^\ast x), 1 \otimes \delta^\ast(1\otimes 1)\rangle.$$

Thus, comparing \eqref{eq:T-formula-1} and \eqref{eq:T-formula-2} gives
$$\|T(x)\|^2_2 = \langle T(x^\ast x), \delta^\ast(1\otimes 1)\rangle,$$
which is the stated formula \eqref{eq:T-formula}.

(ii) To begin with, we observe that for any polynomial $P$ and any $x\in D(\delta)$
\begin{equation}\label{eq:T-bound}
\|T(P(x))\|_2 \leq \|P(x)\| \|\delta^\ast(1\otimes 1)\|_2 + \|(\partial P)(x)\|_\pi \|\delta(x)\|_2,
\end{equation}
where $\|\cdot\|_\pi$ denotes the projective norm on $D(\delta) \odot D(\delta)$, which is given by
$$\|u\|_\pi := \inf\Big\{\sum^N_{j=1} \|a_j\| \|b_j\|\, \Big|\, N\in\N,\, a_1,\dots,a_N,b_1,\dots,b_N\in D(\delta):\ u = \sum^N_{j=1} a_j \otimes b_j\Big\}$$
for any $u\in D(\delta) \odot D(\delta)$.

Indeed, according to \eqref{eq:Voi-formula-red}, we have for each polynomial $P$ and $x\in D(\delta)$
\begin{align*}
T(P(x)) &= \delta^\ast(P(x) \otimes 1)\\
        &= P(x) \delta^\ast(1\otimes 1) - (\id\otimes\tau)(\delta(P(x)))\\
        &= P(x) \delta^\ast(1\otimes 1) - (\id\otimes\tau)((\partial P)(x) \sharp \delta(x)),
\end{align*}
where we used that $\delta(P(x)) = (\partial P)(x) \sharp \delta(x)$ according to formula \eqref{eq:noncommutative-derivatives}, which was given in Remark \ref{rem:noncommutative-derivatives}. This yields as desired
\begin{align*}
\|T(P(x))\|_2 &\leq \|P(x) \delta^\ast(1\otimes 1)\|_2 + \|(\id\otimes\tau)((\partial P)(x) \sharp \delta(x))\|_2\\
              &\leq \|P(x) \delta^\ast(1\otimes 1)\|_2 + \|(\partial P)(x) \sharp \delta(x)\|_2\\
              &\leq \|P(x)\| \|\delta^\ast(1\otimes 1)\|_2 + \|(\partial P)(x)\|_\pi \|\delta(x)\|_2.
\end{align*}

If we apply \eqref{eq:T-bound} to the polynomial $P(x) = x^m$ for any $m\in\N$, we may deduce that
$$\|T(x^m)\|_2 \leq \|x\|^m \|\delta^\ast(1\otimes 1)\|_2 + m\|x\|^{m-1} \|\delta(x)\|_2$$
since $\|(\partial P)(x)\|_\pi \leq m \|x\|^{m-1}$ holds. From this, we immediately get that
$$\limsup_{m\rightarrow\infty} \|T(x^m)\|_2^{\frac{1}{m}} \leq \|x\|.$$
Thus, condition (ii) of Lemma \ref{lem:iterative-norm-bound} is satisfied.

Lemma \ref{lem:iterative-norm-bound} tells us now that $\|T(x)\|_2 \leq \|\delta^\ast(1\otimes1)\|_2 \|x\|$, which is by definition of $T$ exactly the first inequality in \eqref{eq:Dab-inequalities-1}. The second one can simply be deduced from the first one by using that $\delta^\ast(u^\dagger) = \delta^\ast(u)^\ast$ holds for any $u\in D(\delta^\ast)$ according to Lemma \ref{lem:real-derivations-formulas}, since $\delta$ was assumed to be real.
\end{proof}

Combining Theorem \ref{thm:Dab-lemma} with Lemma \ref{lem:Voi-lemma} yields the following corollary.

\begin{corollary}\label{cor:Dab-extended}
Let $\delta: M \supseteq D(\delta) \to L^2(M,\tau) \otimes L^2(M,\tau)$ a non-commutative derivation on a tracial $W^\ast$-probability space $(M,\tau)$. We assume that $\delta$ is a real derivation in the sense of \ref{def:noncommutative-derivation-real} and that is satisfies the coassociativity relation formulated in \ref{def:noncommutative-derivation-coassociative}. Then, for all $x_1,x_2\in D(\delta)$, it holds true that
\begin{equation}\label{eq:bound-1}
\|\delta^\ast(x_1 \otimes x_2)\|_2 \leq 3 \|\delta^\ast(1\otimes1)\|_2 \|x_1\| \|x_2\|
\end{equation}
and
\begin{equation}\label{eq:bound-2}
\begin{aligned}
\|(\id\otimes\tau)(\delta(x_1) \cdot x_2)\|_2 & \leq 4 \|\delta^\ast(1\otimes1)\|_2 \|x_1\| \|x_2\|,\\
\|(\tau\otimes\id)(x_1 \cdot \delta(x_2))\|_2 & \leq 4 \|\delta^\ast(1\otimes1)\|_2 \|x_1\| \|x_2\|.
\end{aligned}
\end{equation}
\end{corollary}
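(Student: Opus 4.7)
The plan is to derive both \eqref{eq:bound-1} and \eqref{eq:bound-2} by combining Voiculescu's product rule (Lemma \ref{lem:Voi-lemma-0}), the Leibniz rule for $\delta$, and the bounds of Theorem \ref{thm:Dab-lemma}, together with the trivial observation that for any $v\in L^2(M,\tau)\otimes L^2(M,\tau)$ and $y\in M$ one has $(\id\otimes\tau)(y\cdot v)=y\cdot(\id\otimes\tau)(v)$, and similarly on the right; this is immediate on elementary tensors and extends by continuity. The norm estimates will always come from the two standard inequalities $\|y_1 z y_2\|_2\le \|y_1\|\,\|z\|_2\,\|y_2\|$ for $z\in L^2(M,\tau)$ and the analogous bimodule estimate on $L^2(M,\tau)\otimes L^2(M,\tau)$.

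For \eqref{eq:bound-1}, I would write $x_1\otimes x_2=(x_1\otimes 1)\cdot x_2$ and apply the second formula of Lemma \ref{lem:Voi-lemma-0}, yielding
\begin{equation*}
\delta^\ast(x_1\otimes x_2)=\delta^\ast(x_1\otimes 1)\,x_2-(\id\otimes\tau)\bigl((x_1\otimes 1)\sharp\delta(x_2^\ast)^\ast\bigr).
\end{equation*}
Since $(x_1\otimes 1)\sharp w=x_1\cdot w$ for any $w$, the second term reduces to $x_1\cdot(\id\otimes\tau)(\delta(x_2^\ast)^\ast)$. A short calculation using that $\delta$ is real, namely $\delta(x_2^\ast)^\dagger=\delta(x_2)$, gives $\delta(x_2^\ast)^\ast=\sigma(\delta(x_2))$, and hence $(\id\otimes\tau)(\delta(x_2^\ast)^\ast)=(\tau\otimes\id)(\delta(x_2))$. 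The triangle inequality together with the two relevant bounds from Theorem \ref{thm:Dab-lemma}, namely $\|\delta^\ast(x_1\otimes 1)\|_2\le\|\delta^\ast(1\otimes 1)\|_2\|x_1\|$ and $\|(\tau\otimes\id)(\delta(x_2))\|_2\le 2\|\delta^\ast(1\otimes 1)\|_2\|x_2\|$, then yields the factor $1+2=3$.

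For \eqref{eq:bound-2}, the idea is to trade the bimodule action for a factor in the argument of $\delta$ by means of the Leibniz rule: writing $\delta(x_1)\cdot x_2=\delta(x_1x_2)-x_1\cdot\delta(x_2)$ gives
\begin{equation*}
(\id\otimes\tau)(\delta(x_1)\cdot x_2)=(\id\otimes\tau)(\delta(x_1x_2))-x_1\cdot(\id\otimes\tau)(\delta(x_2)).
\end{equation*}
Applying Theorem \ref{thm:Dab-lemma} to each of these two summands, and using $\|x_1 x_2\|\le\|x_1\|\|x_2\|$, gives the constant $2+2=4$. The second inequality of \eqref{eq:bound-2} is obtained symmetrically, writing $x_1\cdot\delta(x_2)=\delta(x_1x_2)-\delta(x_1)\cdot x_2$ and applying $(\tau\otimes\id)$.

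There is no real obstacle here; the only mild subtlety is the sign/flip bookkeeping in the very first step (the identification of $(\id\otimes\tau)(\delta(x_2^\ast)^\ast)$), which is handled cleanly by combining \eqref{eq:involutions-flip}, the reality condition $\delta(x^\ast)=\delta(x)^\dagger$, and the relation $u^\dagger=\sigma(u^\ast)$. Note that coassociativity is not needed directly in this corollary — it only enters through its use in Theorem \ref{thm:Dab-lemma}, on which the whole argument rests.
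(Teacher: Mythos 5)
Your proof is correct and follows essentially the same route as the paper: both reduce $\delta^\ast(x_1\otimes x_2)$ to the identity $\delta^\ast(x_1\otimes 1)x_2 - x_1(\tau\otimes\id)(\delta(x_2))$ (you via Lemma \ref{lem:Voi-lemma-0} directly, the paper via formula \eqref{eq:Voi-formula}, which is derived from that same lemma) and then invoke the bounds \eqref{eq:Dab-inequalities-1} and \eqref{eq:Dab-inequalities-2} to get $1+2=3$, while \eqref{eq:bound-2} is obtained in both cases from the Leibniz rule $\delta(x_1)\cdot x_2=\delta(x_1x_2)-x_1\cdot\delta(x_2)$ and two applications of \eqref{eq:Dab-inequalities-2} giving $2+2=4$. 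Your flip/involution bookkeeping for $(\id\otimes\tau)(\delta(x_2^\ast)^\ast)=(\tau\otimes\id)(\delta(x_2))$ and your remark that coassociativity enters only through Theorem \ref{thm:Dab-lemma} are both accurate.
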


\begin{proof}
According to Lemma \ref{lem:Voi-lemma}, we have for all $x_1,x_2 \in D(\delta)$ that
\begin{align*}
 \delta^\ast (x_1 \otimes x_2)
 &= x_1 \delta^\ast(1\otimes1) x_2 - m_1(\id\otimes\tau\otimes\id)(\delta \otimes \id + \id \otimes \delta) (x_1 \otimes x_2)\\
 &= x_1 \delta^\ast(1\otimes1) x_2 - (\id\otimes\tau)(\delta(x_1)) x_2 - x_1 (\tau\otimes\id)(\delta(x_2))\\
 &= \delta^\ast(x_1 \otimes 1) x_2 - x_1 (\tau\otimes\id)(\delta(x_2))
\end{align*}
and thus, by applying the estimates \eqref{eq:Dab-inequalities-2} and \eqref{eq:Dab-inequalities-1}, that
\begin{align*}
\|\delta^\ast(x_1 \otimes x_2)\|_2
 &\leq \|\delta^\ast(x_1 \otimes 1)\|_2 \|x_2\| + \|x_1\| \|(\tau\otimes\id)(\delta(x_2))\|_2\\
 &\leq 3 \|\delta^\ast(1\otimes1)\|_2 \|x_1\| \|x_2\|.
\end{align*}
This shows the validity of \eqref{eq:bound-1}. For proving \eqref{eq:bound-2}, we first use integration by parts in order to obtain
\begin{align*}
(\id\otimes\tau)(\delta(x_1) \cdot x_2)
 &= (\id\otimes\tau)(\delta(x_1 x_2)) - (\id\otimes\tau)(x_1 \cdot \delta(x_2))\\
 &= (\id\otimes\tau)(\delta(x_1 x_2)) - x_1 (\id\otimes\tau)(\delta(x_2))
\end{align*}
for arbitrary $x_1,x_2\in D(\delta)$. From this, we can easily deduce by using \eqref{eq:Dab-inequalities-2} that
\begin{align*}
\|(\id\otimes\tau)(& \delta(x_1) \cdot x_2)\|_2\\
 &\leq \|(\id\otimes\tau)(\delta(x_1 x_2))\|_2 + \|x_1\| \|(\id\otimes\tau)(\delta(x_2))\|_2\\
 &\leq 4 \|\delta^\ast(1\otimes1)\|_2 \|x_1\| \|x_2\|
\end{align*}
which is the first inequality of \eqref{eq:bound-2}. The second inequality can either be proven similarly or can be deduced from the first one by using that $\delta$ is real.
\end{proof}

We conclude this subsection by highlighting Formula \eqref{eq:T-formula}, which was obtained in the proof of Theorem \ref{thm:Dab-lemma}. Since we think that this observation might be of independent interest and could be helpful for future investigations, we record \eqref{eq:T-formula} here by the following corollary.

\begin{corollary}\label{cor:T-formula}
Let $\delta: M \supseteq D(\delta) \to L^2(M,\tau) \otimes L^2(M,\tau)$ be a non-commutative derivation on a tracial $W^\ast$-probability space $(M,\tau)$, which is real and satisfies the coassociativity relation. Assume additionally that $1 \otimes 1 \in D(\delta^\ast)$. Then, for each $x\in D(\delta)$, it holds true that
$$\|\delta^\ast(x \otimes 1)\|^2_2 = \langle \delta^\ast( (x^\ast x) \otimes 1), \delta^\ast(1\otimes 1)\rangle.$$
\end{corollary}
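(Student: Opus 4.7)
The plan is to unwind both sides of the identity using Voiculescu's formula \eqref{eq:Voi-formula-red} and then reduce the desired equality to a single algebraic cancellation that relies on the Leibniz rule, realness, and the coassociativity relation. In fact, this is precisely the formula \eqref{eq:T-formula} that was extracted in the course of proving Theorem \ref{thm:Dab-lemma}, so the task is to isolate and repackage that argument.

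First, I would set $y := (\id\otimes\tau)(\delta(x))$ and apply \eqref{eq:Voi-formula-red} to write $\delta^\ast(x\otimes 1) = x\delta^\ast(1\otimes 1) - y$, so that
$$\|\delta^\ast(x\otimes 1)\|_2^2 = \|x\delta^\ast(1\otimes 1)\|_2^2 + \|y\|_2^2 - 2\Re\langle x\delta^\ast(1\otimes 1), y\rangle.$$
On the other side, applying \eqref{eq:Voi-formula-red} with $x$ replaced by $x^\ast x$ gives
$$\delta^\ast((x^\ast x)\otimes 1) = x^\ast x\,\delta^\ast(1\otimes 1) - (\id\otimes\tau)(\delta(x^\ast x)),$$
and pairing with $\delta^\ast(1\otimes 1)$ and using the trace property yields
$$\langle \delta^\ast((x^\ast x)\otimes 1), \delta^\ast(1\otimes 1)\rangle = \|x\delta^\ast(1\otimes 1)\|_2^2 - \langle (\id\otimes\tau)(\delta(x^\ast x)), \delta^\ast(1\otimes 1)\rangle.$$
So it suffices to establish
$$\|y\|_2^2 - 2\Re\langle x\delta^\ast(1\otimes 1), y\rangle = -\langle (\id\otimes\tau)(\delta(x^\ast x)), \delta^\ast(1\otimes 1)\rangle. \tag{$\ast$}$$

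To rewrite $\|y\|_2^2$ I would use the adjoint relation together with Lemma \ref{lem:coassociativity-red}: starting from $\|y\|_2^2 = \langle y\otimes 1, \delta(x)\rangle = \langle \delta^\ast(y\otimes 1), x\rangle$, apply \eqref{eq:Voi-formula-red} to $\delta^\ast(y\otimes 1)$ and use Lemma \ref{lem:coassociativity-red} to identify $(\id\otimes\tau)(\delta(y))$ with $(\id\otimes\langle\cdot, \delta^\ast(1\otimes 1)\rangle)(\delta(x))$. This produces
$$\|y\|_2^2 = \langle 1\otimes \delta^\ast(1\otimes 1)x^\ast, \delta(x^\ast)\rangle - \langle \delta(x), x\otimes \delta^\ast(1\otimes 1)\rangle,$$
where the realness of $\delta$ (via Lemma \ref{lem:real-derivations-formulas}) is used to convert between $\delta(x)$ and $\delta(x^\ast)$. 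The second inner product is then unfolded by the Leibniz rule applied to $x^\ast\cdot\delta(x) = \delta(x^\ast x) - \delta(x^\ast)\cdot x$, giving
$$\|y\|_2^2 = 2\Re\langle 1\otimes \delta^\ast(1\otimes 1)x^\ast, \delta(x^\ast)\rangle - \langle \delta(x^\ast x), 1\otimes \delta^\ast(1\otimes 1)\rangle.$$

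The remaining step is to show the cross term $2\Re\langle x\delta^\ast(1\otimes 1), y\rangle$ coincides with $2\Re\langle 1\otimes \delta^\ast(1\otimes 1)x^\ast, \delta(x^\ast)\rangle$; this is a direct computation using \eqref{eq:inner-product-involution-1}, Lemma \ref{lem:real-derivations-formulas}, and the traciality of $\tau$. Substituting into the left-hand side of $(\ast)$ leaves $-\langle \delta(x^\ast x), 1\otimes\delta^\ast(1\otimes 1)\rangle$; realness together with \eqref{eq:inner-product-involution-2} identifies this with $-\langle (\id\otimes\tau)(\delta(x^\ast x)), \delta^\ast(1\otimes 1)\rangle$, closing the argument. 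The main obstacle is bookkeeping: one must carefully track the interplay of the two involutions $\ast$ and $\dagger$ together with the flip-symmetry provided by the coassociativity hypothesis, which is where Lemma \ref{lem:coassociativity-red} plays its key role.
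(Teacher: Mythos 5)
Your proposal is correct and follows essentially the same route as the paper: the corollary is exactly formula \eqref{eq:T-formula}, established in step (i) of the proof of Theorem \ref{thm:Dab-lemma}, and your computation of $\|y\|_2^2$ via Lemma \ref{lem:coassociativity-red}, the matching of the cross terms, and the final identification of $\langle \delta(x^\ast x), 1\otimes\delta^\ast(1\otimes 1)\rangle$ with $\langle \delta(x^\ast x), \delta^\ast(1\otimes 1)\otimes 1\rangle$ reproduce that argument. The only detail worth making explicit is that this last identification produces a complex conjugate via $\dagger$, so one must note that $\langle \delta(x^\ast x), 1\otimes\delta^\ast(1\otimes 1)\rangle$ is real (which follows since your intermediate expression for $\|\delta^\ast(x\otimes 1)\|_2^2$ exhibits it as a difference of real quantities), exactly as the paper does.
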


Assume, for instance, that in the situation of Corollary \ref{cor:T-formula} the conditions
$$\delta^\ast(1\otimes 1) \in D(\overline{\delta}) \cap M \qquad\text{and}\qquad \overline{\delta}(\delta^\ast(1\otimes 1)) \in M \otimes M$$
are satisfied in addition. Corollary \ref{cor:T-formula} allows us then to conclude that for any $x\in D(\delta)$
$$\|\delta^\ast(x \otimes 1)\|^2_2 = \langle \delta^\ast( (x^\ast x) \otimes 1), \delta^\ast(1\otimes 1)\rangle = \langle (x^\ast x) \otimes 1, \overline{\delta}(\delta^\ast(1\otimes 1))\rangle = \langle x \otimes 1, x \cdot \overline{\delta}(\delta^\ast(1\otimes 1))\rangle$$
and hence $\|\delta^\ast(x \otimes 1)\|_2 \leq \|\overline{\delta}(\delta^\ast(1\otimes 1))\|^{1/2} \|x\|_2$ holds. Like in Theorem \ref{thm:Dab-lemma}, we can use this in combination with \eqref{eq:Voi-formula-red} in order to deduce that
$$\|(\id\otimes\tau)(\delta(x))\|_2 \leq \big(\|\delta^\ast(1\otimes 1)\| + \|\overline{\delta}(\delta^\ast(1\otimes 1))\|^{1/2}\big) \|x\|_2$$
holds for each $x\in D(\delta)$. Analogous inequalities can of course be proven for $\delta^\ast(1\otimes x)$ and $(\tau\otimes\id)(\delta(x))$. In other words, we can strengthen the bounds that were obtained in Theorem \ref{thm:Dab-lemma} by imposing some stronger ``regularity conditions'' on $\delta^\ast(1\otimes 1)$. Note that this in fact slightly improves similar estimates that were deduced in \cite{Dab-free_stochastic_PDE}.

\subsection{Survival of zero divisors}

We are mainly interested here in applications of the theory of non-commutative derivations to regularity questions for certain distributions. The basic idea that originates in \cite{MaiSpeicherWeber2014, MaiSpeicherWeber2015} is that, in order to exclude atoms, one should reformulate this question in more algebraic terms as a question about the existence of zero-divisors, where the latter can be excluded by a successive reduction of the degree by applying non-commutative derivations.

The key for this purpose is a certain inequality which allows the conclusion that zero-divisors $xu=0$ survive under applying operators of the form
$$\Delta_p(x) := (\tau \otimes \id)(p \cdot \delta(x))$$
for any non-commutative derivation $\delta$ satisfying certain conditions and some non-trivial projection $p$. This inequality will be given below in Proposition \ref{prop:key-inequality}. As we will see, it will more generally relate products $xu$ and $x^\ast v$ for elements $x$ in the domain of the given non-commutative derivation $\delta$ and arbitrary elements $u,v$ in the corresponding von Neumann algebra with an expression of the form $v^\ast \cdot \delta(x) \cdot u$.

We point out that although the inequality itself holds in a considerably large generality, the feasibility of the whole strategy for excluding zero-divisors relies heavily on the structure of the given non-commutative derivation. Roughly speaking, applying $\delta$ has to ``reduce the degree'' of the given element $x$. More formally, one should think of a grading on the space of distributions under consideration that is compatible with $\delta$. We do not want to give a definition in full generality, however we want to mention that the grading that was used in \cite{MaiSpeicherWeber2014, MaiSpeicherWeber2015} was given by the monomials of fixed degree. As we will see in Section \ref{sec:ProofMainThm}, where we present the proof of Theorem \ref{MainThm}, there is a closely related grading on the space of finite Wigner integrals. 

The crucial inequality will now be formulated in the following proposition. 

\begin{proposition}\label{prop:key-inequality}
Let $\delta: L^2(M,\tau) \supseteq D(\delta) \to L^2(M,\tau) \otimes L^2(M,\tau)$ be a non-commutative derivation. We assume that $\delta$ is real and satisfies the coassociativity relation.

Then, if in addition $1\otimes 1 \in D(\delta^\ast)$ holds, we have for all $x\in D(\overline{\delta})$, where $\overline{\delta}$ denotes the closure of $\delta$, and $u,v\in M$ the inequality
\begin{equation}\label{eq:key-inequality}
|\langle v^\ast \cdot \overline{\delta}(x) \cdot u, y_1 \otimes y_2\rangle| \leq 4 \|\delta^\ast(1\otimes 1)\|_2 \big(\|v\| \|x u\|_2 + \|u\| \|x^\ast v\|_2\big) \|y_1\| \|y_2\|
\end{equation}
for all $y_1,y_2 \in D(\delta)$.

In particular, if we have both $xu=0$ and $x^\ast v = 0$ for $x\in D(\overline{\delta})$ with some $u,v\in M$, then also $v^\ast \cdot \overline{\delta}(x) \cdot u = 0$ holds.
\end{proposition}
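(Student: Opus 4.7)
The plan is to first establish~\eqref{eq:key-inequality} in the simplified setting where $x, u, v, y_1, y_2$ all lie in $D(\delta)$, and then to pass to full generality by approximation. In this restricted case, traciality of $\tau \otimes \tau$ lets one rewrite the pairing as
$$\langle v^\ast \cdot \delta(x) \cdot u, y_1 \otimes y_2\rangle = \langle \delta(x), (vy_1) \otimes (y_2 u^\ast)\rangle.$$
Setting $a := vy_1$ and $b := y_2 u^\ast$, which both lie in $D(\delta)$ since $D(\delta)$ is an algebra, Lemma~\ref{lem:Voi-lemma} (applicable because $1 \otimes 1 \in D(\delta^\ast)$) places $a \otimes b$ into $D(\delta^\ast)$, so that the pairing becomes $\langle x, \delta^\ast(a \otimes b)\rangle$. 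Voiculescu's formula~\eqref{eq:Voi-formula} then expresses $\delta^\ast(a \otimes b)$ as the sum of $a\,\delta^\ast(1\otimes 1)\,b$, $-a\,(\tau \otimes \id)(\delta(b))$, and $-(\id \otimes \tau)(\delta(a))\,b$.

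Each of these three contributions to $\langle x, \delta^\ast(a \otimes b)\rangle$ will be estimated by cycling the trace so as to isolate an $L^2$-factor of the form $v^\ast xu$, $v^\ast x$, or $xu$. For the first summand,
$$|\langle x, a\,\delta^\ast(1\otimes 1)\,b\rangle| = |\langle y_1^\ast v^\ast xu y_2^\ast, \delta^\ast(1\otimes 1)\rangle| \leq \|y_1\|\|y_2\|\|v\|\|xu\|_2 \|\delta^\ast(1\otimes 1)\|_2,$$
using $\|v^\ast xu\|_2 \leq \|v\|\|xu\|_2$. For the second and third summands, the coassociativity of $\delta$ places $(\tau \otimes \id)(\delta(b))$ and $(\id \otimes \tau)(\delta(a))$ into $D(\delta) \subseteq M$ with $L^2$-norms controlled by~\eqref{eq:Dab-inequalities-2} from Theorem~\ref{thm:Dab-lemma}; a further cycle of the trace then yields the bounds $2\|\delta^\ast(1\otimes 1)\|_2\|y_1\|\|y_2\|\|u\|\|x^\ast v\|_2$ (using $\|v^\ast x\|_2 = \|x^\ast v\|_2$) and $2\|\delta^\ast(1\otimes 1)\|_2\|v\|\|y_1\|\|y_2\|\|xu\|_2$, respectively. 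Summing the three contributions establishes~\eqref{eq:key-inequality}, in fact with the sharper constant $3$ in place of the stated $4$.

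To remove the hypothesis $u, v \in D(\delta)$, I would invoke Kaplansky's density theorem to obtain bounded $\ast$-strongly convergent sequences $u_n, v_n \in D(\delta)$ with $\|u_n\| \leq \|u\|$ and $\|v_n\| \leq \|v\|$, and then exploit the strong-operator continuity of the $M$-bimodule action on $L^2 \otimes L^2$ together with continuity of left/right multiplication on $L^2$ to pass to the limit. A further approximation of $x \in D(\overline{\delta})$ by $x_n \in D(\delta)$ with $x_n \to x$ in $L^2$ and $\delta(x_n) \to \overline{\delta}(x)$ in $L^2 \otimes L^2$ then finishes the proof of~\eqref{eq:key-inequality}. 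The ``in particular'' statement follows immediately: when $xu = 0 = x^\ast v$, the right-hand side of~\eqref{eq:key-inequality} vanishes for every $y_1, y_2 \in D(\delta)$, and density of $D(\delta) \odot D(\delta)$ in $L^2(M,\tau) \otimes L^2(M,\tau)$ forces $v^\ast \cdot \overline{\delta}(x) \cdot u = 0$.

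The hard part will be the cyclic-trace bookkeeping in the middle paragraph: one has to arrange the manipulations so that the surviving $L^2$-factor is a genuine product $v^\ast xu$, $v^\ast x$, or $xu$ (each of which can then be bounded by $\|v\|\|xu\|_2$ or $\|u\|\|x^\ast v\|_2$), rather than $x$ alone flanked by operator norms of $u$ and $v$, since only the former yields the bound on which the zero-divisor application of the proposition relies.
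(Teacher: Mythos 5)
Your argument is correct, and it in fact yields the sharper constant $3$ in place of the stated $4$, but the key algebraic identity you use differs from the paper's. The paper starts from $\langle xu, \delta^\ast(vy_1\otimes y_2)\rangle$, applies the Leibniz rule to $\delta(xu)$, and isolates the single error term $\langle(\id\otimes\tau)(\delta(u)\cdot y_2^\ast), x^\ast v y_1\rangle$; the factor $\|xu\|_2$ then comes from the initial pairing, estimated via \eqref{eq:bound-1} of Corollary \ref{cor:Dab-extended} with constant $3$, and the factor $\|x^\ast v\|_2$ from the error term, estimated via \eqref{eq:bound-2} with constant $4$ --- whence the $4$ in \eqref{eq:key-inequality}. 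You instead move both $u$ and $v$ across the pairing by traciality to get $\langle x, \delta^\ast(vy_1\otimes y_2u^\ast)\rangle$ and expand $\delta^\ast(a\otimes b)$ by Voiculescu's three-term formula \eqref{eq:Voi-formula}, bounding each term by cycling the trace and invoking \eqref{eq:Dab-inequalities-2} directly; the constants accumulate as $1+2$ on the $\|v\|\|xu\|_2$ term and $2$ on the $\|u\|\|x^\ast v\|_2$ term, giving $3$ overall. Your route is more symmetric in $u$ and $v$ and never differentiates $u$ itself, though it imposes the same initial restriction $u,v\in D(\delta)$ (you need $vy_1,\,y_2u^\ast\in D(\delta)$ for Lemma \ref{lem:Voi-lemma} to apply, the paper needs $\delta(u)$ to make sense); both proofs then remove that restriction by Kaplansky's density theorem and extend to $x\in D(\overline{\delta})$ by approximation in the graph norm, exactly as you sketch, and the concluding density argument for the zero-divisor statement is identical. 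The trace-cycling bookkeeping you flag as the delicate point does go through as you describe: each of the three terms isolates one of $v^\ast xu$, $v^\ast x$, or $xu$ in $L^2$, with the remaining factors absorbed into operator norms.
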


Before giving the proof of Proposition \ref{prop:key-inequality}, we first mention an easy but useful application of Kaplansky's density theorem.

\begin{lemma}\label{lem:Kaplansky}
In the given setting of a tracial $W^\ast$-probability space $(M,\tau)$, let $D$ be a $\ast$-subalgebra of $M$, which is weakly dense in $M$. Then, for each $w\in M$, there exists a sequence $(w_k)_{k\in\N}$ of elements in $D$ such that
\begin{itemize}
\item[(i)] $\displaystyle{\sup_{k\in\N} \|w_k\| \leq \|w\|}$,
\item[(ii)] $\|w_k-w\|_2 \to 0$ as $k \to \infty$.
\end{itemize}
If $w=w^\ast$, then we may assume in addition that $w_k=w_k^\ast$ for all $k\in\N$.
\end{lemma}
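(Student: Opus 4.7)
The plan is to deduce the statement directly from Kaplansky's density theorem, bridged to the tracial $L^2$-setting via the GNS representation of $(M,\tau)$.

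First I would dispose of the trivial case $w=0$ by setting $w_k=0$, and otherwise pass to $w/\|w\|$, which lies in the closed unit ball of $M$. Kaplansky's density theorem asserts that, since $D$ is a $\ast$-subalgebra of $M$ whose weak (equivalently, strong) closure equals $M$, the closed unit ball of $D$ is strongly dense in the closed unit ball of $M$; moreover the self-adjoint part of the unit ball of $D$ is strongly dense in the self-adjoint part of the unit ball of $M$. Applying this in the GNS representation $M\subseteq B(L^2(M,\tau))$, I obtain a net $(v_\alpha)$ in the closed unit ball of $D$, chosen self-adjoint if $w=w^\ast$, such that $v_\alpha \to w/\|w\|$ in the strong operator topology.

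Next I would translate this strong convergence into $\|\cdot\|_2$-convergence by evaluating on the cyclic vector $\hat 1 \in L^2(M,\tau)$, which yields $\|v_\alpha - w/\|w\|\|_2 \to 0$. Since $\|\cdot\|_2$ is an ordinary metric, I can now extract a sequence: for each $k\in\N$ select $v_k$ in the (self-adjoint part of the) closed unit ball of $D$ with $\|v_k - w/\|w\|\|_2 < 1/(k\|w\|)$, and set $w_k := \|w\|\,v_k$. Then $w_k \in D$ (self-adjoint whenever $w$ was), $\|w_k\|\le\|w\|$, and $\|w_k-w\|_2 < 1/k$, which gives (i) and (ii).

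No substantial obstacle is expected. The only mild subtlety is the passage from a Kaplansky-net to an honest sequence without assuming separability of the predual of $M$, but this causes no difficulty here because the approximation is already controlled by the single real parameter $\|\cdot\|_2$, so one may simply select along $\epsilon_k = 1/(k\|w\|)$.
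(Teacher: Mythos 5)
Your argument is correct and follows essentially the same route as the paper: invoke Kaplansky's density theorem to get a norm-controlled net, convert strong operator convergence into $\|\cdot\|_2$-convergence, and then extract a sequence using the metric $\|\cdot\|_2$. The only cosmetic differences are that you obtain the $L^2$-convergence by evaluating on the cyclic vector rather than by the paper's computation with $\tau$ on products, and you handle the self-adjoint case via the self-adjoint version of Kaplansky's theorem instead of the paper's trick of passing to real parts afterwards.
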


\begin{proof}
First of all, we note that for proving the existence of a sequence $(w_k)_{k\in\N}$ of elements in $D$, which satisfies conditions (i) and (ii), it suffices to find a net $(w_\lambda)_{\lambda\in\Lambda}$ of elements in $D$, which satisfies
\begin{itemize}
\item[(i)'] $\displaystyle{\sup_{\lambda\in\Lambda} \|w_\lambda\| \leq \|w\|}$,
\item[(ii)'] $\|w_\lambda-w\|_2 \stackrel{\lambda\in\Lambda}{\longrightarrow} 0$.
\end{itemize}
Indeed, given such a net $(w_\lambda)_{\lambda\in\Lambda}$, we may choose a sequence $(\lambda_k)_{k\in\N}$ in $\Lambda$, such that $\|w_{\lambda_k}-w\|_2 < \frac{1}{k}$ holds for all $k\in\N$. Hence, the sequence $(w_{\lambda_k})_{k\in\N}$ satisfies (i) and (ii), as desired. 

Now, for finding a net of elements in $D$, which satisfies (i)' and (ii)', we apply Kaplansky's density theorem. Indeed, this theorem guarantees the existence of a net $(w_\lambda)_{\lambda\in\Lambda}$ of elements in $D$, such that $\|w_\lambda\| \leq \|w\|$ holds for all $\lambda\in\Lambda$, and which converges to $w$ in the strong operator topology. Thus, the net $(w_\lambda)_{\lambda\in\Lambda}$ already satisfies condition (i)' and it remains to show the validity of (ii)'.

For seeing (ii)', we note that with respect to the weak operator topology,
$$w_\lambda^\ast w \stackrel{\lambda\in\Lambda}{\longrightarrow} w^\ast w,\qquad w^\ast w_\lambda \stackrel{\lambda\in\Lambda}{\longrightarrow} w^\ast w, \qquad\text{and}\qquad w_\lambda^\ast w_\lambda \stackrel{\lambda\in\Lambda}{\longrightarrow} w^\ast w,$$
such that according to the continuity of $\tau$
\begin{align*}
\|w_\lambda-w\|_2^2 &= \tau((w_\lambda-w)^\ast(w_\lambda-w))\\
                    &= \tau(w_\lambda^\ast w_\lambda) - \tau(w_\lambda^\ast w) - \tau(w^\ast w_\lambda) + \tau(w^\ast w)\\
                    &\stackrel{\lambda\in\Lambda}{\longrightarrow} 0,
\end{align*}
as claimed in (ii)'. This concludes the proof of the first part of the lemma.

For proving the additional statement, we just have to observe that in the case $w=w^\ast$, we can take any sequence $(w_k)_{k\in\N}$ that satisfies (i) and (ii), and replace each $w_k$ by its real part $\Re(w_k)=\frac{1}{2}(w_k+w_k^\ast)$. Indeed, for the sequence $(w_k)_{k\in\N}$ obtained in this way, conditions (i) and (ii) are still valid, but we have achieved $w_k=w_k^\ast$ for all $k\in\N$ in addition.
\end{proof}

Now, we may proceed by

\begin{proof}[Proof of Proposition \ref{prop:key-inequality}]
Firstly, we assume that $x\in D(\delta)$ as well as $u,v\in D(\delta)$. In this particular case, we may compute
\begin{align*}
\langle x u, \delta^\ast(v y_1 \otimes y_2)\rangle
&= \langle \delta(x u), v y_1 \otimes y_2\rangle\\
&= \langle \delta(x) \cdot u, v y_1 \otimes y_2\rangle + \langle x \cdot \delta(u), v y_1 \otimes y_2\rangle\\
&= \langle v^\ast \cdot \delta(x) \cdot u, y_1 \otimes y_2\rangle + \langle \delta(u) \cdot y_2^\ast, x^\ast v y_1 \otimes 1\rangle\\
&= \langle v^\ast \cdot \delta(x) \cdot u, y_1 \otimes y_2\rangle + \langle (\id\otimes\tau)(\delta(u)\cdot y_2^\ast), x^\ast v y_1 \rangle.
\end{align*}
Rearranging the terms yields
$$\langle v^\ast \cdot \delta(x) \cdot u, y_1 \otimes y_2\rangle = \langle x u, \delta^\ast(v y_1 \otimes y_2)\rangle - \langle (\id\otimes\tau)(\delta(u)\cdot y_2^\ast), x^\ast v y_1 \rangle,$$
from which we deduce by the inequalities in Corollary \ref{cor:Dab-extended} that
\begin{align*}
|\langle v^\ast \cdot \delta(x) & \cdot u, y_1 \otimes y_2\rangle|\\
&\leq |\langle x u, \delta^\ast(v y_1 \otimes y_2)\rangle| + |\langle (\id\otimes\tau)(\delta(u)\cdot y_2^\ast), x^\ast v y_1 \rangle|\\
&\leq \|xu\|_2 \|\delta^\ast(v y_1 \otimes y_2)\|_2 + \|(\id\otimes\tau)(\delta(u)\cdot y_2^\ast)\|_2 \|x^\ast v y_1\|_2\\
&\leq 4 \|\delta^\ast(1\otimes 1)\|_2 \big(\|v\| \|x u\|_2 + \|u\| \|x^\ast v\|_2\big) \|y_1\| \|y_2\|,
\end{align*}
as desired. Due to Lemma \ref{lem:Kaplansky}, this inequality extends to arbitrary $u,v\in M$.

Thus, we have proven \eqref{eq:key-inequality} for $x\in D(\delta)$ and $u,v\in M$. It remains to show that we may extend it from $x\in D(\delta)$ to $x\in D(\overline{\delta})$.

Since $D(\overline{\delta})$ turns out to be the closure of $D(\delta)$ with respect to the norm $\|\cdot\|_{2,1}$ defined by
$$\|x\|_{2,1} := \big(\|x\|_2^2 + \|\delta(x)\|_2^2\big)^{\frac{1}{2}} \qquad \text{for any $x\in D(\delta)$},$$
we can find for any $x\in D(\overline{\delta})$ a sequence $(x_k)_{k\in\N}$ in $D(\delta)$ such that both conditions $\|x_k - x\|_2 \to 0$ and $\|\delta(x_k) - \overline{\delta}(x)\|_2 \to 0$ as $k\to\infty$ are satisfied. Hence, for given $u,v\in M$, we observe
$$\lim_{k\to\infty} \langle v^\ast \cdot \delta(x_k) \cdot u, y_1 \otimes y_2\rangle = \langle v^\ast \cdot \delta(x_k) \cdot u, y_1 \otimes y_2\rangle$$
and
$$\lim_{k\to\infty} \big(\|v\| \|x_k u\|_2 + \|u\| \|x^\ast_k v\|_2\big) = \|v\| \|x u\|_2 + \|u\| \|x^\ast v\|_2,$$
from which \eqref{eq:key-inequality} immediately follows in full generality.

Finally, if we have $x u = 0$ and $x^\ast v = 0$, then \eqref{eq:key-inequality} implies that
$$\langle v^\ast \cdot \overline{\delta}(x) \cdot u, y_1 \otimes y_2\rangle = 0 \qquad \text{for all $y_1,y_2 \in D(\delta)$}$$
and hence by linearity 
$$\langle v^\ast \cdot \overline{\delta}(x) \cdot u, w\rangle = 0 \qquad \text{for all $w \in D(\delta) \odot D(\delta)$}.$$
Since $D(\delta) \odot D(\delta)$ is dense in $L^2(\S,\tau) \otimes L^2(\S,\tau)$, we obtain $v^\ast \cdot \overline{\delta}(x) \cdot u = 0$, as stated.
\end{proof}

\section{Proof of Theorem \ref{MainThm}}
\label{sec:ProofMainThm}

We are prepared now to build the proof of Theorem \ref{MainThm} on its pillars raised in the previous sections.

In the light of free Malliavin calculus, it seems natural that methods from Section \ref{sec:Derivations} could be used for a proof of Theorem \ref{MainThm} based on the same reduction method as in \cite{MaiSpeicherWeber2014,MaiSpeicherWeber2015}. Nevertheless, there is the fundamental obstacle that in the world of free stochastic calculus, the role of non-commutative derivatives which were used in the ``discrete setting'' of \cite{MaiSpeicherWeber2014,MaiSpeicherWeber2015}, is taken over by the Malliavin operators as their ``continuous counterparts''. These operators are seemingly of completely different nature.

But on closer inspection, it turns out that the right object for this purpose, which bridges -- somehow as an architrave, if one wants to strain the architecture language again -- between free stochastic calculus and the theory of non-commutative derivatives are directional gradients. We will introduce this concept in the following subsection.

\subsection{Directional gradients}

Roughly speaking, directional gradients are obtained from the gradient operator by integrating out the (for us obstructive) time dependence against any function in $L^2(\R_+)$. More formally, we shall introduce these objects as follows.

\begin{definition}\label{def:directional-gradient}
For each $h\in L^2(\R_+)$, we define an unbounded linear operator
$$\nabla^h:\ L^2(\S,\tau) \supseteq D(\nabla^h) \to L^2(\S,\tau) \otimes L^2(\S,\tau)$$
with domain $D(\nabla^h) := D(\nabla) = \S_\alg$ by
$$\nabla^h Y := \langle\nabla Y,h\rangle = \int_{\R_+} \nabla_t Y\, \overline{h(t)}\, dt,$$
where we refer to the pairing $\langle\cdot,\cdot\rangle$ that was introduced in Definition \ref{def:biprocess-pairing}. We call $\nabla^h$ the \emph{directional gradient (in the direction $h$)}.
\end{definition}

This terminology goes in fact parallel to classical Malliavin calculus, where corresponding expressions are also interpreted as directional derivatives.

We collect some basic but very important properties of directional gradients in the following lemma.

\begin{lemma}\label{lem:directional-gradient-basic}
Let $h\in L^2(\R_+)$ be given.
\begin{itemize}
 \item[(a)] If $\cdot$ denotes the left and right action of $\S$ on $L^2(\S,\tau) \otimes L^2(\S,\tau)$, respectively, then the Leibniz rule $$\nabla^h(Y_1 Y_2) = (\nabla^h Y_1) \cdot Y_2 + Y_1 \cdot (\nabla^h Y_2)$$ holds for all $Y_1,Y_2\in D(\nabla^h) = \S_\alg$.
 \item[(b)] For all $Y\in D(\nabla^h)$, it holds true that $$\nabla^h(Y^\ast) = (\nabla^{\overline{h}} Y)^\dagger.$$
Thus, if $h\in L^2(\R_+,\R)$, we have in particular that $$\nabla^h(Y^\ast) = (\nabla^h Y)^\dagger$$ holds for all $Y\in D(\nabla^h)$.
 \item[(c)] The directional gradient $\nabla^h$ takes its values in $\S_\alg \odot \S_\alg$ and we have that $$(\nabla^h \otimes \id) \nabla^h = (\id \otimes \nabla^h) \nabla^h.$$ More generally, it holds true for all $h_1,h_2\in L^2(\R_+)$ that $$(\nabla^{h_1} \otimes \id) \nabla^{h_2} = (\id \otimes \nabla^{h_2}) \nabla^{h_1}.$$
\end{itemize}
\end{lemma}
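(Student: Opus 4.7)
The plan is to transport everything to the Fock space $\F = \F(L^2(\R_+))$ via the isometric isomorphism $I^S$, under which $\nabla^h$ is simply $\tilde\nabla$ followed by contraction of the middle $L^2(\R_+)$-slot against $\overline{h}$. All three statements then reduce either to the corresponding property of $\tilde\nabla$ on $\F_\alg$ or to a Leibniz-style propagation from the generators $X(h_0)$, $h_0\in L^2(\R_+,\R)$, of $\S_\alg$. Part (a) is immediate: the left/right action of $\S$ on $\B_2 \cong \S\otimes L^2(\R_+)\otimes\S$ does not touch the time variable, so the pairing $\langle\cdot,h\rangle$ commutes with these actions, and applying it to the Leibniz rule \eqref{eq:Leibniz-rule} for $\nabla$ gives the Leibniz rule for $\nabla^h$ termwise.

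For part (b), I would verify the identity on Wick products $Y = W(g_1\otimes\cdots\otimes g_n)$, whose linear span is $\S_\alg$. Proposition \ref{prop:gradient} gives
$$\nabla^h Y = \sum_{j=1}^n \langle g_j, h\rangle\, W(g_1\otimes\cdots\otimes g_{j-1})\otimes W(g_{j+1}\otimes\cdots\otimes g_n),$$
and the same formula applied to $Y^\ast = W(\overline{g_n}\otimes\cdots\otimes\overline{g_1})$ reverses the tensor order. On the other side, $(\nabla^{\overline h}Y)^\dagger$ flips and conjugates the two Fock factors of $\nabla^{\overline h}Y$. A relabeling $k \leftrightarrow n-k+1$ together with the elementary identity $\langle \overline{g_k},h\rangle = \overline{\langle g_k,\overline h\rangle}$ aligns the two sums term by term. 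Since both sides are conjugate-linear in $Y$, this extends by linearity to $D(\nabla^h) = \S_\alg$; for real $h$ we have $\overline h = h$, which gives the second formula.

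For the range assertion in part (c), Proposition \ref{prop:gradient} gives $\nabla X(h_0)$ as the biprocess $t\mapsto h_0(t)\,1\otimes 1$, so $\nabla^h X(h_0) = \langle h_0,h\rangle\,1\otimes 1 \in \S_\alg\odot\S_\alg$. Since $\S_\alg\odot\S_\alg$ is stable under left and right multiplication by $\S_\alg$, the Leibniz rule of (a) propagates the containment to every word in the generators by induction on word length. For the coassociativity identity, the cleanest route is via the Fock space: a direct computation on elementary tensors shows that $(\tilde\nabla\otimes\id\otimes\id)\tilde\nabla = (\id\otimes\id\otimes\tilde\nabla)\tilde\nabla$ on $\F_\alg$, both sides producing
$$\sum_{1\leq k<j\leq n}(g_1\otimes\cdots\otimes g_{k-1})\otimes g_k\otimes(g_{k+1}\otimes\cdots\otimes g_{j-1})\otimes g_j\otimes(g_{j+1}\otimes\cdots\otimes g_n).$$
Pairing the first $L^2(\R_+)$-slot against $\overline{h_1}$ and the second against $\overline{h_2}$, then transporting through $I^S$, turns the left side into $(\nabla^{h_1}\otimes\id)\nabla^{h_2}$ and the right side into $(\id\otimes\nabla^{h_2})\nabla^{h_1}$; the special case $h_1 = h_2 = h$ is the first displayed identity of (c).

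The only real obstacle is bookkeeping: one has to keep track, in the iterated composition $\tilde\nabla\circ\tilde\nabla$, of which of the two newly-created $L^2(\R_+)$-slots should be paired with $\overline{h_1}$ and which with $\overline{h_2}$ on each side of the proposed identity (the ``outer'' $\tilde\nabla$ contributes the slot that gets paired with the \emph{outer} functional, even though it appears \emph{first} in the tensor ordering). Once this correspondence is set consistently, the common Fock-space formula descends to the two different iterated contractions without further computation.
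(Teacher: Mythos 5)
Your proof is correct and follows essentially the same route as the paper's: part (a) by pushing the Leibniz rule of $\nabla$ through the pairing with $h$, and parts (b) and (c) by direct verification on a spanning set, matching terms via the relabelling $j\mapsto n-j+1$ for (b) and via the common double sum over ordered pairs of positions for (c). The only cosmetic difference is that you carry out the computations for (b) and (c) on the Fock-space side (Wick products and elementary tensors, transported through $I^S$), whereas the paper computes directly on words $X(g_1)\cdots X(g_n)$ in $\S_\alg$; the combinatorics is identical.
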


\begin{proof}
The fact that $\nabla^h$ satisfies the Leibniz rule stated in (a) follows immediately from the Leibniz rule \eqref{eq:Leibniz-rule} for $\nabla$ on $D(\nabla)$, since the domains $D(\nabla)$ and $D(\nabla^h)$ agree.

For seeing (b), we consider $Y=X(h_1)\dots X(h_n) \in \S_\alg$ for $h_1,\dots,h_n\in L^2(\R_+,\R)$. A straightforward calculation confirms that
\begin{align*}
\nabla^h(Y^\ast)
&= \sum^n_{j=1} \langle h_j, h\rangle X(h_n) \cdots X(h_{j+1}) \otimes X(h_{j-1}) \cdots X(h_1)\\
&= \Big(\sum^n_{j=1} \langle h_j, \overline{h}\rangle X(h_1) \cdots X(h_{j-1}) \otimes X(h_{j+1}) \cdots X(h_n)\Big)^\dagger\\
&= (\nabla^{\overline{h}} Y)^\dagger.
\end{align*}
Because $h=\overline{h}$ holds for any $h\in L^2(\R_+,\R)$, the additional statement in (b) is an immediate consequence of the formula  $\nabla^h(Y^\ast) = (\nabla^{\overline{h}} Y)^\dagger$.
Alternatively, by referring to Remark \ref{rem:real-generators}, it suffices to check $\nabla^h(Y^\ast) = (\nabla^{\overline{h}} Y)^\dagger$ on the algebraic generators $(X(g))_{g\in L^2(\R_+,\R)}$ of $\S_\alg$. But in this case, the statement is obvious since $X(g)$ is self-adjoint and since we have $\nabla^h X(g) = \langle g,h\rangle_{L^2(\R_+)}\, 1 \otimes 1$ for any $g\in L^2(\R_+,\R)$.

For proving (c), since $\nabla^h$ clearly takes its values in $\S_\alg \odot \S_\alg$, it only remains to show the stated formula. For doing this, it suffices by linearity to prove
$$(\nabla^{h_1} \otimes \id) \nabla^{h_2} Y = (\id \otimes \nabla^{h_2}) \nabla^{h_1}Y$$
for all $h_1,h_2\in L^2(\R_+)$ and any element $Y\in\S_\alg$ of the form
$$Y = X(g_1) X(g_2) \cdots X(g_n).$$
If $1\leq j_1 < j_2 \leq n$ are given, we will abbreviate in the following
$$\check{X}_{j_1,j_2} := X(g_1) \cdots X(g_{j_1-1}) \otimes X(g_{j_1+1}) \cdots X(g_{j_2-1}) \otimes X(g_{j_2+1}) \cdots X(g_n),$$
where as usually empty products are understood as $1$. Firstly, we compute
$$\nabla^{h_2} Y = \sum_{1\leq j_2 \leq n} \langle g_{j_2},h_2 \rangle X(g_1) \cdots X(g_{j_2-1}) \otimes X(g_{j_2+1}) \cdots X(g_n),$$
which yields
$$(\nabla^{h_1} \otimes \id) \nabla^{h_2} Y = \sum_{1\leq j_1 < j_2 \leq n} \langle g_{j_1}, h_1\rangle \langle g_{j_2},h_2 \rangle \check{X}_{j_1,j_2}$$
Similarly, we compute
$$\nabla^{h_1} Y = \sum_{1 \leq j_1 \leq n} \langle g_{j_1},h_1 \rangle X(g_1) \cdots X(g_{j_1-1}) \otimes X(g_{j_1+1}) \cdots X(g_n),$$
which yields
$$(\id \otimes \nabla^{h_2}) \nabla^{h_1} Y = \sum_{1\leq j_1 < j_2 \leq n} \langle g_{j_1}, h_1\rangle \langle g_{j_2},h_2 \rangle \check{X}_{j_1,j_2}$$
Because the right hand sides of both results agree, we finally obtain the desired equality. This concludes the proof.
\end{proof}

Combining the properties of directional gradients that we have established in the previous Lemma \ref{lem:directional-gradient-basic} leads us immediately to the following crucial observation.

\begin{corollary}\label{cor:directional-gradient-derivation}
For any $h\in L^2(\R_+)$, the directional gradient
$$\nabla^h:\ L^2(\S,\tau) \supseteq D(\nabla^h) \to L^2(\S,\tau) \otimes L^2(\S,\tau),$$
induces a non-commutative derivation on $\S$ in the sense of Definition \ref{def:noncommutative-derivation}, which satisfies additionally the coassociativity relation that was formulated in Definition \ref{def:noncommutative-derivation-coassociative}.
If we choose particularly any $h\in L^2(\R_+,\R)$, then $\nabla^h$ is also a real derivation in the sense of Definition \ref{def:noncommutative-derivation-real}.
\end{corollary}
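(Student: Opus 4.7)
The proposal is to verify each of the three properties in turn, each time pointing to the matching item of Lemma \ref{lem:directional-gradient-basic} and filling in the one genuinely structural observation that is not already written down there, namely that the domain $D(\nabla^h)=\S_\alg$ is a weakly dense unital $\ast$-subalgebra of $\S$. Since $\S_\alg$ is by its very definition the unital $\ast$-algebra generated by $\{X(h) \mid h\in L^2(\R_+,\R)\}$ and since $\S=\vN(\{X(h) \mid h\in L^2(\R_+,\R)\})$ by construction, weak density is automatic: the weak closure of $\S_\alg$ is a von Neumann subalgebra containing all generators of $\S$, hence coincides with $\S$. This takes care of the structural half of Definition \ref{def:noncommutative-derivation}.

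For the analytic half, that is, for the Leibniz rule with respect to the natural $\S$-$\S$-bimodule structure on $L^2(\S,\tau)\otimes L^2(\S,\tau)$, I would simply invoke part (a) of Lemma \ref{lem:directional-gradient-basic}, which is exactly the statement needed; the identification of domains $D(\nabla^h)=D(\nabla)=\S_\alg$ ensures that all terms make sense on the same set of elements. Together with the previous paragraph, this shows that $\nabla^h$ is a non-commutative derivation in the sense of Definition \ref{def:noncommutative-derivation}.

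The coassociativity relation from Definition \ref{def:noncommutative-derivation-coassociative} requires two things: first, that $\nabla^h$ takes its values in $D(\nabla^h)\odot D(\nabla^h)$, and second, the intertwining identity $(\nabla^h\otimes \id)\nabla^h=(\id\otimes \nabla^h)\nabla^h$. Both are delivered verbatim by part (c) of Lemma \ref{lem:directional-gradient-basic} (the first claim there being precisely that $\nabla^h$ lands in $\S_\alg\odot \S_\alg$, which equals $D(\nabla^h)\odot D(\nabla^h)$).

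Finally, when $h\in L^2(\R_+,\R)$, the reality condition $\nabla^h(Y^\ast)=(\nabla^h Y)^\dagger$ for $Y\in D(\nabla^h)$ is exactly the second half of part (b) of Lemma \ref{lem:directional-gradient-basic}, so Definition \ref{def:noncommutative-derivation-real} is satisfied. There is really no obstacle here: the whole content of the corollary has been arranged into Lemma \ref{lem:directional-gradient-basic}, and the proof amounts to reading off the three items (a), (b), (c) against the three defining axioms, plus the one-line observation that $\S_\alg$ is weakly dense in $\S$ by construction.
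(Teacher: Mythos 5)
Your proposal is correct and follows exactly the route the paper intends: the corollary is stated as an immediate consequence of Lemma \ref{lem:directional-gradient-basic}, and your reading-off of parts (a), (b), (c) against the three definitions, supplemented by the observation that $D(\nabla^h)=\S_\alg$ is a weakly dense unital $\ast$-subalgebra of $\S$ (which the paper leaves implicit), is precisely the argument.
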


The importance of this observations is perfectly clear now, since it puts directional gradients in the setting non-commutative derivations and gives therefore access to the general theory that was presented in Section \ref{sec:Derivations}.

However, there is still one key property missing that is needed to fully open this powerful toolbox, namely the condition $1 \otimes 1 \in D(\delta^h)$, where $\delta^h$ denotes the adjoint operator of $\nabla^h$, i.e.
$$\delta^h := (\nabla^h)^\ast:\ L^2(\S,\tau) \otimes L^2(\S,\tau) \supseteq D(\delta^h) \to L^2(\S,\tau),$$
We shall call $\delta^h$ the \emph{directional divergence operator (in the direction $h$)} in the following.

The condition $1 \otimes 1 \in D(\delta^h)$ would in particular guarantee according to Proposition \ref{lem:Voi-lemma} that $\delta^h$ is densely defined and hence that $\nabla^h$ is closable. But there is actually a shortcut in our situation. We insert here the following lemma which expresses the directional divergence operator $\delta^h$ in terms of the divergence operator $\delta$ and which will allow us to conclude directly that the domain of $\delta^h$ is sufficiently large.

\begin{lemma}\label{lem:directional-divergence}
For any $h\in L^2(\R_+)$, the domain $D(\delta^h)$ of the directional divergence operator $\delta^h$ contains $\S_\alg \odot \S_\alg$ and we have explicitly
$$\delta^h(U) = \delta(U \sharp 1 \otimes h \otimes 1) \qquad\text{for all $U\in\S_\alg \odot \S_\alg$}.$$
In particular, $\delta^h$ is densely defined and we have that $1\otimes 1 \in D(\delta^h)$ with $\delta^h(1\otimes 1) = X(h)$.
\end{lemma}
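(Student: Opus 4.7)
The plan is to reduce the adjoint relation defining $\delta^h$ to the adjoint relation \eqref{eq:adjoints} between $\nabla$ and $\delta$. Since $\nabla^h$ is built from $\nabla$ by pairing against $h$ in the $L^2(\R_+)$-slot, its adjoint ought to be obtained from $\delta$ by inserting $h$ back into that same slot, which is exactly what the claimed formula $\delta^h(U)=\delta(U\sharp(1\otimes h\otimes 1))$ encodes.

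Concretely, I would fix $U\in\S_\alg\odot\S_\alg$ and $Y\in D(\nabla^h)=\S_\alg$, and expand
$$\langle\nabla^h Y,U\rangle = \Big\langle \int_{\R_+}(\nabla_t Y)\,\overline{h(t)}\,dt,\, U\Big\rangle$$
by pulling the integral outside the Hilbert space inner product and then absorbing the scalar factor $\overline{h(t)}$ into the second argument (which is conjugate-linear); this produces
$$\int_{\R_+}\langle \nabla_t Y,\,h(t)U\rangle\,dt = \langle \nabla Y,\,V\rangle_{\B_2},$$
where $V$ is the biprocess $t\mapsto h(t)U$. Writing $U=A_1\otimes A_2$ and invoking the identification $\B_2\cong L^2(\S,\tau)\otimes L^2(\R_+)\otimes L^2(\S,\tau)$ shows that $V$ corresponds to $A_1\otimes h\otimes A_2$, which lies in $\S_\alg\odot L^2(\R_+)\odot\S_\alg=D(\delta)$, and by the very definition of $\sharp$ in Remark \ref{rem:sharp} this element equals $U\sharp(1\otimes h\otimes 1)$. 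Applying \eqref{eq:adjoints} then rewrites $\langle\nabla Y,V\rangle_{\B_2}=\langle Y,\delta(V)\rangle$, whence $U\in D(\delta^h)$ with $\delta^h(U)=\delta(U\sharp(1\otimes h\otimes 1))$, extending to all of $\S_\alg\odot\S_\alg$ by linearity.

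Density of $\S_\alg\odot\S_\alg$ in $L^2(\S,\tau)\otimes L^2(\S,\tau)$ then immediately yields that $\delta^h$ is densely defined. For the final identification $\delta^h(1\otimes 1)=X(h)$ I would specialize $U=1\otimes 1$, so that $V$ becomes the deterministic (hence adapted) $L^2$-biprocess $t\mapsto h(t)(1\otimes 1)$; Proposition \ref{prop:divergence} then evaluates
$$\delta(V)=\int_{\R_+} h(t)(1\otimes 1)\sharp dS_t=\int_{\R_+}h(t)\,dS_t=I_1^S(h),$$
and the identification $I_1^S(h)=W(h)=X(h)$, which follows from Lemma \ref{lem:Wick-product} together with the defining property $W(h)\Omega=h=X(h)\Omega$, closes the argument.

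The computation is quite direct, and no genuine obstacle is present; the only point requiring a bit of care is the bookkeeping around sesquilinearity, namely ensuring that the pairing $\langle U,h\rangle=\int U_t\overline{h(t)}\,dt$ from Definition \ref{def:biprocess-pairing} (which is conjugate-linear in $h$) interacts correctly with the Hilbert space inner product so that the conjugation on $h(t)$ is indeed transferred into a factor $h(t)$ multiplying $U$ in the $\B_2$-pairing, and that the identification $\B_2\cong L^2(\S,\tau)\otimes L^2(\R_+)\otimes L^2(\S,\tau)$ intertwines the pointwise $\S$-bimodule action on biprocesses with the $\sharp$ action on simple tensors.
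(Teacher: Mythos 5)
Your argument is correct and takes essentially the same route as the paper: the paper's proof is exactly the one-line chain $\langle Y, \delta(U \sharp 1 \otimes h \otimes 1)\rangle = \langle \nabla Y, U \sharp 1 \otimes h \otimes 1\rangle_{\B_2} = \langle \nabla^h Y, U\rangle$, whose second equality you simply spell out in detail via the identification $\B_2 \cong L^2(\S,\tau) \otimes L^2(\R_+) \otimes L^2(\S,\tau)$ and the sesquilinearity bookkeeping. The concluding evaluation $\delta^h(1\otimes 1) = \delta(1\otimes h\otimes 1) = X(h)$ is likewise the same.
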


\begin{proof}
We just have to note that by definition $U\sharp 1 \otimes h \otimes 1 \in D(\delta)$ for any $U\in\S_\alg \odot \S_\alg$ and that the corresponding element $\delta(U \sharp 1 \otimes h \otimes 1) \in L^2(\S,\tau)$ satisfies
$$\langle Y, \delta(U \sharp 1 \otimes h \otimes 1) \rangle = \langle \nabla Y, U \sharp 1 \otimes h \otimes 1\rangle_{\B_2} = \langle \nabla^h Y, U\rangle.$$
This means that $\S_\alg \odot \S_\alg \subseteq D(\delta^h)$ and even more explicit
$$\delta^h(U) = \delta(U \sharp 1 \otimes h \otimes 1) \qquad\text{for all $U\in\S_\alg \odot \S_\alg$}.$$
In particular, we may deduce that $\delta^h$ is densely defined and that $1\otimes 1 \in D(\delta^h)$ holds true with $\delta^h(1\otimes 1) = \delta(1 \otimes h \otimes 1) = X(h)$.
\end{proof}

The closability of $\nabla^h$, which is implied by the lemma above, will be recorded in the following proposition. But we discuss there in addition that the domain of the closure of $\nabla^h$ contains the domain of the closure of $\nabla$.

\begin{proposition}\label{prop:directional-gradient}
Given $h\in L^2(\R_+)$. The directional gradient
$$\nabla^h:\ L^2(\S,\tau) \supseteq D(\nabla^h) \to L^2(\S,\tau) \otimes L^2(\S,\tau)$$
is densely defined and closable. The domain $D(\overline{\nabla}^h)$ of its closure
$$\overline{\nabla}^h:\ L^2(\S,\tau) \supseteq D(\overline{\nabla}^h) \to L^2(\S,\tau) \otimes L^2(\S,\tau)$$
contains the domain $D(\overline{\nabla})$ of $\overline{\nabla}$.
\end{proposition}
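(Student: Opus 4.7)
The plan is to handle the three assertions — dense definition, closability, and the containment $D(\overline{\nabla}) \subseteq D(\overline{\nabla}^h)$ — in that order, exploiting directly the results that have already been established.

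Dense definition is immediate: by construction $D(\nabla^h) = \S_\alg$, and $\S_\alg$ is weakly dense in $\S$ and hence dense in $L^2(\S,\tau)$. For closability, I would apply the standard criterion that an unbounded operator between Hilbert spaces is closable if and only if its adjoint is densely defined. The adjoint of $\nabla^h$ is by definition the directional divergence $\delta^h$, and Lemma \ref{lem:directional-divergence} tells us that $\S_\alg \odot \S_\alg \subseteq D(\delta^h)$; since $\S_\alg \odot \S_\alg$ is dense in $L^2(\S,\tau) \otimes L^2(\S,\tau)$, we conclude that $\delta^h$ is densely defined and therefore $\nabla^h$ is closable.

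For the containment $D(\overline{\nabla}) \subseteq D(\overline{\nabla}^h)$, the key observation is that on the common initial domain $\S_\alg = D(\nabla) = D(\nabla^h)$, the directional gradient is obtained from the gradient by the sesqui-linear pairing of Definition \ref{def:biprocess-pairing}, namely $\nabla^h Y = \langle \nabla Y, h\rangle$. The continuity estimate of Lemma \ref{lem:biprocess-pairing} (in the case $p=2$) therefore gives
$$\|\nabla^h Y\|_{L^2(\S \otimes \S, \tau\otimes\tau)} \leq \|\nabla Y\|_{\B_2}\, \|h\|_{L^2(\R_+)}$$
for every $Y \in \S_\alg$. Given now $Y \in D(\overline{\nabla})$, I would pick an approximating sequence $(Y_n)_{n\in\N}$ in $\S_\alg$ with $Y_n \to Y$ in $L^2(\S,\tau)$ and $\nabla Y_n \to \overline{\nabla} Y$ in $\B_2$. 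Applying the displayed inequality to the differences $Y_n - Y_m$ shows that $(\nabla^h Y_n)_{n\in\N}$ is a Cauchy sequence in $L^2(\S,\tau) \otimes L^2(\S,\tau)$, and hence converges. By the very definition of the closure, this yields $Y \in D(\overline{\nabla}^h)$ (together with the explicit identity $\overline{\nabla}^h Y = \langle \overline{\nabla} Y, h\rangle$, which we do not need here but which comes for free).

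There is no real obstacle in this proof; the only point requiring care is that the pairing inequality of Lemma \ref{lem:biprocess-pairing} was stated for elements of the simple-biprocess space $\E(\R_+, M \odot M)$, so one should check that $\nabla Y_n$ indeed lies in this space for $Y_n \in \S_\alg$ — but this is clear from the explicit formula for $\nabla$ given in Proposition \ref{prop:gradient}, since for $Y_n$ a finite sum of Wigner integrals of functions in $L^2(\R_+^k)$ the gradient $\nabla Y_n$ is a finite sum of tensor-valued functions of the form $t \mapsto \sum_j A_j(t) \otimes B_j(t)$ with $A_j, B_j$ step functions in $\S_\alg$. The argument then terminates without further issue.
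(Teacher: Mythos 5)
Your proposal is correct and follows essentially the same route as the paper: closability via the densely defined adjoint $\delta^h$ from Lemma \ref{lem:directional-divergence}, and the inclusion $D(\overline{\nabla})\subseteq D(\overline{\nabla}^h)$ via the estimate $\|\nabla^h Y\|_2\leq \|h\|_{L^2(\R_+)}\|\nabla Y\|_{\B_2}$ of Lemma \ref{lem:biprocess-pairing}, which the paper packages as a comparison of the graph norms $\|\cdot\|_{2,1}^h\leq\max\{1,\|h\|_{L^2(\R_+)}\}\|\cdot\|_{2,1}$ rather than as your explicit Cauchy-sequence argument. Your closing worry is moot (and its justification slightly off): $\S_\alg$ is generated by $X(h)$ for \emph{arbitrary} $h\in L^2(\R_+,\R)$, so $\nabla Y_n$ need not be a simple biprocess, but this does not matter because Definition \ref{def:biprocess-pairing} already extends the pairing and its inequality to all of $\B_2$, which is the pairing Definition \ref{def:directional-gradient} invokes.
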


\begin{proof}
Basic functional analysis tells us that in this case $D(\overline{\nabla}^h)$ is obtained as the closure of $\S_\alg$ with respect to the norm $\|\cdot\|^h_{2,1}$ that is given by 
$$\|Y\|^h_{2,1} := \big(\|Y\|_2^2 + \|\nabla^h Y\|_2^2\big)^{\frac{1}{2}} \qquad\text{for all $Y\in \S_\alg$},$$
whereas the domain $D(\overline{\nabla})$ of $\overline{\nabla}$ is obtained as the closure of $\S_\alg$ with respect to the norm
$$\|Y\|_{2,1} = \big(\|Y\|_2^2 + \|\nabla Y\|_{\B_2}^2\big)^{\frac{1}{2}} \qquad\text{for all $Y\in \S_\alg$},$$
as we pointed out in Remark \ref{rem:domain-gradient}. Therefore, the desired inclusion $D(\overline{\nabla}) \subseteq D(\overline{\nabla}^h)$ follows as soon as we have established that
\begin{equation}\label{domain-inclusion}
\|Y\|^h_{2,1} \leq \max\{1,\|h\|_{L^2(\R_+)}\}\, \|Y\|_{2,1} \qquad\text{for all $Y\in\S_\alg$}.
\end{equation}
For that purpose, we make use of Lemma \ref{lem:biprocess-pairing}. This yields
$$\|\nabla^h Y\|_2 = \|\langle \nabla Y, h\rangle\|_2 \leq \|h\|_{L^2(\R_+)} \|\nabla Y\|_{\B_2}.$$
Now, the desired inequality \eqref{domain-inclusion} immediately follows.
\end{proof}

\subsection{Reduction by directional gradients}

In the previous subsection, we have seen that directional gradients fit nicely into the general frame of non-commutative derivations. The following proposition, which will be a the core of our reduction method, is therefore an immediate consequence of Proposition \ref{prop:key-inequality}.

\begin{proposition}\label{prop:reduction}
Take any $Y\in\S_\fin$. If there are $u,v\in\S$ such that the conditions $Y u = 0$ and $Y^\ast v = 0$ are satisfied, then it holds true that
$$v^\ast \cdot (\overline{\nabla}^h Y) \cdot u = 0 \qquad\text{for all $h\in L^2(\R_+,\R)$}.$$
\end{proposition}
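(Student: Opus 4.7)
The plan is to apply Proposition \ref{prop:key-inequality} directly to the directional gradient $\nabla^h$, which has been set up in the preceding subsection precisely so as to fit that abstract framework. All that is really required is to check that each hypothesis of Proposition \ref{prop:key-inequality} is met and that $Y$ lies in the domain of the closure $\overline{\nabla}^h$, after which the conclusion is immediate.

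First I would verify the hypotheses on the non-commutative derivation $\delta := \nabla^h$ on $\S$. Corollary \ref{cor:directional-gradient-derivation} tells us that for any $h\in L^2(\R_+)$ the operator $\nabla^h$ is a non-commutative derivation in the sense of Definition \ref{def:noncommutative-derivation} that satisfies the coassociativity relation of Definition \ref{def:noncommutative-derivation-coassociative}, and that for the real-valued $h\in L^2(\R_+,\R)$ considered here it is moreover real in the sense of Definition \ref{def:noncommutative-derivation-real}. The remaining condition $1\otimes 1 \in D((\nabla^h)^\ast)$ is supplied by Lemma \ref{lem:directional-divergence}, which in fact identifies the image explicitly as $\delta^h(1\otimes 1) = X(h)$.

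Next I would verify that $Y\in\S_\fin$ lies in $D(\overline{\nabla}^h)$, so that $\overline{\nabla}^h Y$ is defined. Writing $Y = \sum_{n=0}^N I^S_n(f_n)$ as a finite sum of Wigner integrals, the chaos condition $\sum_{n=0}^\infty n\,\|f_n\|_{L^2(\R_+^n)}^2 < \infty$ holds trivially, so Proposition \ref{prop:gradient} gives $Y\in D(\overline{\nabla})$. By Proposition \ref{prop:directional-gradient} we have the inclusion $D(\overline{\nabla})\subseteq D(\overline{\nabla}^h)$, hence $Y\in D(\overline{\nabla}^h)$ as required.

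With these two ingredients in place I would simply invoke Proposition \ref{prop:key-inequality} with the derivation $\delta=\nabla^h$, the element $x=Y\in D(\overline{\nabla}^h)$, and the given $u,v\in \S$: the hypotheses $Yu=0$ and $Y^\ast v=0$ feed directly into the right-hand side of inequality \eqref{eq:key-inequality} and force $v^\ast\cdot \overline{\nabla}^h(Y)\cdot u = 0$, which is the stated conclusion. There is no real obstacle here, since every nontrivial estimate has already been absorbed into Proposition \ref{prop:key-inequality}; the only point deserving care is the domain issue, namely that $Y$ is in general not polynomial in finitely many $S_t$, so one must pass to the closure $\overline{\nabla}^h$ rather than argue on the algebraic domain $\S_\alg = D(\nabla^h)$.
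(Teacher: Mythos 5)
Your proposal is correct and follows essentially the same route as the paper's proof: it verifies via Corollary \ref{cor:directional-gradient-derivation} and Lemma \ref{lem:directional-divergence} that $\nabla^h$ is a real, coassociative non-commutative derivation with $1\otimes 1 \in D(\delta^h)$, and then invokes Proposition \ref{prop:key-inequality}. Your additional check that $\S_\fin \subseteq D(\overline{\nabla}) \subseteq D(\overline{\nabla}^h)$ is a point the paper settles separately (just before introducing the operators $\Delta_{p,h}$, citing Proposition \ref{prop:directional-gradient}) rather than inside this proof, but it is the same argument.
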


\begin{proof}
Let $h\in L^2(\R_+,\R)$ be given. Firstly, we recall that the directional gradient
$$\nabla:\ L^2(\S,\tau) \supseteq D(\nabla^h) \to L^2(\S,\tau) \otimes L^2(\S,\tau),$$
induces according to Corollary \ref{cor:directional-gradient-derivation} a real non-commutative derivation, which satisfies in addition the coassociativity relation. Furthermore, its adjont operator, the directional divergence operator $\delta^h$, satisfies due to Lemma \ref{lem:directional-divergence} the condition $1 \otimes 1 \in D(\delta^h)$. Thus, we can apply Proposition \ref{prop:key-inequality}, which yields the desired statement. 
\end{proof}

\begin{remark}
In the proof of Proposition \ref{prop:reduction} above, we used crucially the properties of directional gradients, which put them nicely in the setting of non-commutative derivations and which therefore allowed us to turn on by Proposition \ref{prop:key-inequality} the powerful machinery that was build up in Section \ref{sec:Derivations}.

But recall that one of the crucial ingredients in the proof of Proposition \ref{prop:key-inequality} were Dabrowski's inequalities \ref{thm:Dab-lemma}. Thus, concealed in the larger apparatus, we deduced particularly for any $Y\in D(\nabla^h)$ according to the inequalities \eqref{eq:Dab-inequalities-1} that
\begin{equation}
\begin{aligned}
\|\delta^h(Y\otimes 1)\|_2 & \leq \|h\|_{L^2(\R_+)} \|Y\|,\\
\|\delta^h(1\otimes Y)\|_2 & \leq \|h\|_{L^2(\R_+)} \|Y\|,
\end{aligned}
\end{equation}
and according to the inequalities \eqref{eq:Dab-inequalities-2} that
\begin{equation}
\begin{aligned}
\|(\id\otimes\tau)(\nabla^h Y)\|_2 &\leq 2 \|h\|_{L^2(\R_+)} \|Y\|,\\
\|(\tau\otimes\id)(\nabla^h Y)\|_2 &\leq 2 \|h\|_{L^2(\R_+)} \|Y\|,
\end{aligned}
\end{equation}
since we have $\|\delta^h(1\otimes1)\|_2 = \|h\|_{L^2(\R_+)}$.

However, the semicircular generators that underlie our situation force in fact a much stronger result than the inequalities above. In fact, for any $Y\in\S_\fin$, we have that
\begin{equation}
\begin{aligned}
\|\delta^h(Y\otimes 1)\|_2 &= \|h\|_{L^2(\R_+)} \|Y\|_2,\\
\|\delta^h(1\otimes Y)\|_2 &= \|h\|_{L^2(\R_+)} \|Y\|_2,
\end{aligned}
\end{equation}
and
\begin{equation}
\begin{aligned}
\|(\id\otimes\tau)(\nabla^h Y)\|_2 &\leq \|h\|_{L^2(\R_+)} \|Y\|_2,\\
\|(\tau\otimes\id)(\nabla^h Y)\|_2 &\leq \|h\|_{L^2(\R_+)} \|Y\|_2.
\end{aligned}
\end{equation}
This can be seen by considering the chaos decomposition of $Y$ and by using the formulas
\begin{equation}
\begin{aligned}
\delta^h(I_n^S(f) \otimes 1) &= I_{n+1}^S(f \otimes h),\\
\delta^h(1 \otimes I^S_n(f)) &= I_{n+1}^S(h \otimes f)
\end{aligned}
\end{equation}
and
\begin{equation}
\begin{aligned}
(\id\otimes\tau)(\nabla^h I^S_n(f)) &= I^S_{n-1}(f \stackrel{1}{\smallfrown} h),\\
(\tau\otimes\id)(\nabla^h I^S_n(f)) &= I^S_{n-1}(h \stackrel{1}{\smallfrown} f).
\end{aligned}
\end{equation}

The author is grateful to Yoann Dabrowski for pointing out that this fact should be included for reasons of clarity.

Of course, one could argue now that in view of this observation, the discussion around Theorem \ref{thm:Dab-lemma} becomes superfluous in the context of this paper. But since there is absolutely no chance to avoid completely a detour through the realm of non-commutative derivations -- even by taking this shortcut -- we decided to present the theory of non-commutative derivations (and in particular the result of Proposition \ref{prop:key-inequality}) in full generality, in order to show the complete picture and to make it ready for its possible use in future investigations.
\end{remark}

\subsection{How to control the reduction}

Because Theorem \ref{MainThm} is a statement about elements $f\in\F$, which break off after finitely many non-zero terms, namely about elements in $\F_\fin := \F_\fin(L^2(\R))$, we shall take now a closer look on
$$\S_\fin := \Big\{I^S(f)\Big|\ f=(f_n)_{n=0}^\infty \in\F_\fin\Big\}$$
as the corresponding space of Wigner integrals, called the \emph{finite Wigner chaos}. By definition, $\S_\fin$ is only a subset of $L^2(\S,\tau)$, but due to \eqref{Haagerup-inequality} and Proposition \ref{thm:Wigner-products} it turns out to be in fact a $\ast$-subalgebra of $\S$. Combining this with the easy observation that $\S_\alg$ is contained in $\S_\fin$, we may localize $\S_\fin$ as intermediate $\ast$-algebra $\S_\alg \subseteq \S_\fin \subseteq \S$.

Following the lines of the proof in \cite{MaiSpeicherWeber2014,MaiSpeicherWeber2015}, we shall introduce now certain operators, which will later allow us to reduce any zero-divisor in $\S_\fin$ in a controllable way to a zero-divisor in the chaos of order zero by means of Proposition \ref{prop:reduction}.

\begin{definition} 
For any $h\in L^2(\R_+)$ and any projection $p\in\S$, we consider the linear operator $\Delta_{p,h}: \S_\fin \to \S_\fin$ that is defined by
$$\Delta_{p,h} Y := (\tau\otimes\id)\big(p \otimes 1\, (\overline{\nabla}^h Y)\big) \qquad \text{for all $Y\in \S_\fin$}.$$
\end{definition}

Note that these operators are indeed well-defined since $\S_\fin \subseteq D(\overline{\nabla}) \subseteq D(\overline{\nabla}^h)$ holds by Proposition \ref{prop:directional-gradient}. The fact that $\Delta_{p,h}$ takes its values in $\S_\fin$ and is made more precise in the following lemma, which moreover shows that $\Delta_{p,h}$ ``reduces the degree'' with respect to the natural grading on $\S_\fin$, which is induced by $\F_\fin$. 

\begin{lemma}\label{lem:Delta-formula}
Let $h\in L^2(\R_+)$ and any projection $p\in\S$ be given. Let $\tau_p$ be the bounded linear functional on $\F$ that is given by
$$\tau_p:\ \F\to\C,\ f\mapsto \tau(p I^S(f)).$$
In fact, if we make use of the chaos decomposition of $p$, we can write $p=I^S(g)$ for some $g=(g_n)_{n=0}^\infty \in \F$, so that $\tau_p(f) = \langle f,g\rangle_\F$ holds for all $f\in\F$.

Now, let $f \in L^2(\R_+^n)$ be given. For $1\leq k\leq n$, we may regard $f$ as an element $f^{(k-1,n-k)}$ in $L^2(\R_+, L^2(\R_+^{k-1}) \otimes L^2(\R_+^{n-k})) \subset L^2(\R_+, \F \otimes \F)$. Using this notation, it holds true that
\begin{equation}\label{eq:Delta-formula}
\Delta_{p,h} I^S_n(f) = \sum^n_{k=1} I^S_{n-k}\Big((\tau_p \otimes \id_\F)\Big(\int_{\R_+} f^{(k-1,n-k)}_t\, \overline{h(t)}\, dt\Big)\Big).
\end{equation}
\end{lemma}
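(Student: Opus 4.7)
The plan is to unfold the definitions step by step. First, I would compute $\overline{\nabla}^h I_n^S(f)$ by combining Proposition \ref{prop:gradient}, which gives an explicit formula for $\overline{\nabla}_t I_n^S(f)$ as a sum of $n$ tensor products $I_{k-1}^S(\cdot) \otimes I_{n-k}^S(\cdot)$ indexed by the position $k$ of the differentiated variable, with the integration against $\overline{h(t)}\, dt$ built into Definition \ref{def:directional-gradient}. Identifying the resulting partially-integrated kernels with the objects $G^k := \int_{\R_+} f^{(k-1,n-k)}_t\,\overline{h(t)}\, dt \in L^2(\R_+^{k-1}) \otimes L^2(\R_+^{n-k})$, this first step yields
$$\overline{\nabla}^h I^S_n(f) = \sum_{k=1}^n (I^S_{k-1} \otimes I^S_{n-k})(G^k).$$

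Second, I would apply $(\tau \otimes \id)\big((p \otimes 1)\,\cdot\,\big)$ term by term. On an elementary tensor $u \otimes v$ with $u \in L^2(\R_+^{k-1})$ and $v \in L^2(\R_+^{n-k})$, a direct computation gives
$$(\tau \otimes \id)\big((p \otimes 1)(I^S_{k-1}(u) \otimes I^S_{n-k}(v))\big) = \tau\big(p\, I^S_{k-1}(u)\big)\, I^S_{n-k}(v) = I^S_{n-k}\big(\tau_p(u)\, v\big),$$
and I would extend this by bilinearity and continuity to all $G^k$ in the Hilbert-space tensor product, producing $I^S_{n-k}\big((\tau_p \otimes \id_\F)(G^k)\big)$. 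Summing over $k$ from $1$ to $n$ then gives the announced formula. The continuity of the extension is justified because $\tau_p$ is bounded on $\F$ (by Cauchy--Schwarz in $L^2(\S,\tau)$ together with the Wigner isometry, as explained in the next step) and $I^S_{n-k}$ is itself an isometry.

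Finally, for the auxiliary identity $\tau_p(f) = \langle f, g\rangle_\F$ when $p = I^S(g)$, I would argue directly: the projection $p$ is self-adjoint, and the relation $I^S(g)^\ast = I^S(g^\ast)$ then forces $g^\ast = g$ in $\F$, whence the Wigner isometry yields
$$\tau_p(f) = \tau\big(I^S(g)^\ast I^S(f)\big) = \langle I^S(f), I^S(g)\rangle_{L^2(\S,\tau)} = \langle f, g\rangle_\F.$$
The whole argument is essentially bookkeeping; the only point that requires a little care is the passage from elementary tensors to arbitrary elements of $L^2(\R_+^{k-1}) \otimes L^2(\R_+^{n-k}) \subseteq \F \otimes \F$, but this is routine given the boundedness of $\tau_p$ and the isometric character of the Wigner integrals, so I do not foresee any real obstacle.
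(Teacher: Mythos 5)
Your proposal is correct and follows essentially the same route as the paper: unfold the explicit action of the gradient on $I_n^S(f)$, pair with $h$, apply $(\tau\otimes\id)\big((p\otimes 1)\,\cdot\,\big)$ on elementary tensors, and pass to general kernels by a density/continuity argument (the paper organizes this slightly differently, verifying the whole formula on $f=f_1\otimes\cdots\otimes f_n$ and then proving continuity of both sides of \eqref{eq:Delta-formula} in $f$ via the number operator and the It\^o isometry). The only point you should make explicit is that your first step uses $\overline{\nabla}^h Y=\langle\overline{\nabla}Y,h\rangle$ for $Y\in D(\overline{\nabla})\setminus\S_\alg$, which is not literally Definition \ref{def:directional-gradient} but follows from the estimate $\|\nabla^h Y\|_2\leq\|h\|_{L^2(\R_+)}\|\nabla Y\|_{\B_2}$ of Lemma \ref{lem:biprocess-pairing}, as in Proposition \ref{prop:directional-gradient}.
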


\begin{proof}
It is very easy to check the validity of the formula under question in the case $f = f_1 \otimes \dots \otimes f_n$. Indeed, we have
$$\overline{\nabla}^h I_n^S(f) = \sum^n_{k=1} \langle f_k, h\rangle I^S_{k-1}(f_1 \otimes \dots \otimes f_{k-1}) \otimes I^S_{n-k}(f_{k+1} \otimes \dots \otimes f_n)$$
and hence
\begin{align*}
\Delta_{p,h} I^S_n(f)
&= \sum^n_{k=1} \langle f_k, h\rangle \tau(p I^S_{k-1}(f_1 \otimes \dots \otimes f_{k-1})) I^S_{n-k}(f_{k+1} \otimes \dots \otimes f_n)\\
&= \sum^n_{k=1} I^S_{n-k}\Big(\langle f_k, h\rangle \tau_p(f_1 \otimes \dots \otimes f_{k-1})\, f_{k+1} \otimes \dots \otimes f_n\Big)\\
&= \sum^n_{k=1} I^S_{n-k}\Big((\tau_p \otimes \id_\F)\Big(\int_{\R_+} f^{(k-1,n-k)}_t\, \overline{h(t)}\, dt\Big)\Big),
\end{align*}
which confirms the desired formula \eqref{eq:Delta-formula} in the case $f = f_1 \otimes \dots \otimes f_n$. By linearity of both of its sides, we conclude that formula \eqref{eq:Delta-formula} also holds for any function in the linear span of
$$\{f_1 \otimes \dots \otimes f_n|\ f_1,\dots,f_n\in L^2(\R_+)\},$$
i.e. for any function in $L^2(\R_+)^{\odot n}$. Since this linear space is dense in $L^2(\R_+^n)$ with respect to $\|\cdot\|_{L^2(\R_+^n)}$, it remains to note that \eqref{eq:Delta-formula} stays valid under taking limits with respect to $\|\cdot\|_{L^2(\R_+^n)}$, which means that we prove the continuity of the left and the right hand side of the formula under question with respect to $\|\cdot\|_{L^2(\R_+^n)}$.

Concerning first the left hand side, we note that
$$\|\overline{\nabla} I^S_n(f)\|_2 = \sqrt{n}\, \|f\|_{L^2(\R^n_+)}.$$
Indeed, we have according to Proposition \ref{prop:number-operator} and the It\^o isometry that
$$\|\overline{\nabla} I^S_n(f)\|_2^2 = \langle \overline{\nabla} I^S_n(f), \overline{\nabla} I^S_n(f)\rangle = \langle \big(\overline{\delta}\, \overline{\nabla}\big) I^S_n(f), I^S_n(f)\rangle = n\, \|I^S_n(f)\|_2^2 = n\, \|f\|_{L^2(\R^n_+)}^2.$$
Thus, we obtain the desired bound
$$\|\Delta_{p,h} I^S_n(f)\|_2 \leq \|p\|\, \|\overline{\nabla} I^S_n(f)\|_2 = \sqrt{n}\, \|p\|\, \|f\|_{L^2(\R^n_+)}.$$

Concerning now the right hand side of the formula under question, we note that 
$$\Big\|\int_{\R_+} f^{(k-1,n-k)}_t\, \overline{h(t)}\, dt\Big\|_{L^2(\R_+^{k-1}) \otimes L^2(\R_+^{n-k})} \leq \|h\|_{L^2(\R_+)} \|f\|_{L^2(\R_+^n)}$$
holds for $1\leq k \leq n$, which yields by the It\^o isometry
\begin{align*}
\Big\|I^S_{n-k}\Big((\tau_p \otimes & \id_\F)\Big(\int_{\R_+} f^{(k-1,n-k)}_t\, \overline{h(t)}\, dt\Big)\Big)\Big\|_2\\
& \leq \Big\|(\tau_p \otimes \id_\F)\Big(\int_{\R_+} f^{(k-1,n-k)}_t\, \overline{h(t)}\, dt\Big)\Big\|_{L^2(\R_+^{n-k})}\\
& \leq \|p\|_2 \Big\|\int_{\R_+} f^{(k-1,n-k)}_t\, \overline{h(t)}\, dt\Big\|_{L^2(\R_+^{k-1}) \otimes L^2(\R_+^{n-k})}\\
& \leq \|p\|_2 \|h\|_{L^2(\R_+)} \|f\|_{L^2(\R_+^n)}.
\end{align*}
This concludes the proof.
\end{proof}

By applying iteratively operators of the form $\Delta_{p,h}$ to a fixed element in the finite Wigner chaos $\S_\fin$, we will therefore reach the chaos of order zero after finitely many steps. The following proposition provides an explicit formula for the output of this procedure.

\begin{proposition}\label{prop:coefficient}
Let $f=(f_n)_{n=0}^\infty \in \F_\fin$ be given and let $N\in\N$ be chosen such that $f_n=0$ for all $n \geq N+1$. Then, for any choice of functions $h_1,\dots,h_N\in L^2(\R_+)$ and projections $p_1,\dots,p_N$, it holds true that
$$\Delta_{p_N,h_N} \cdots \Delta_{p_1,h_1} I^S(f) = \tau(p_1) \cdots\tau(p_N)\, \langle f_N, h_1 \otimes \dots \otimes h_N\rangle\, 1.$$
\end{proposition}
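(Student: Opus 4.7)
My plan is to prove this by induction on $N$, with Lemma \ref{lem:Delta-formula} doing essentially all the work. For $N=0$ the statement reduces to $I^S(f) = f_0 \cdot 1$ (empty product on the right-hand side), which is immediate from the definition of $I^S$.

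For the inductive step, I first apply $\Delta_{p_1,h_1}$ to $I^S(f) = \sum_{n=0}^N I^S_n(f_n)$. The term $I^S_0(f_0)$ is annihilated since the gradient of a scalar is zero, and by Lemma \ref{lem:Delta-formula} each remaining $\Delta_{p_1,h_1} I^S_n(f_n)$ is a sum of Wigner integrals of orders $n-1, n-2, \ldots, 0$. Consequently the output lies again in $\S_\fin$ and can be written as $I^S(\tilde f)$ for some $\tilde f \in \F_\fin$ with $\tilde f_m = 0$ whenever $m \geq N$. The key observation is that in order to feed this into the induction hypothesis I only need to identify the top-order component $\tilde f_{N-1}$, since the inductive formula for $\Delta_{p_N,h_N}\cdots\Delta_{p_2,h_2} I^S(\tilde f)$ depends solely on that component. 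Reading off $\tilde f_{N-1}$ from Lemma \ref{lem:Delta-formula} forces $n=N$ and $k=1$: in that summand, $\tau_{p_1}$ acts on $L^2(\R_+^0)\cong\C$ simply as multiplication by $\tau(p_1)$, and $f_N^{(0,N-1)}_t = f_N(t,\cdot)$, whence
$$\tilde f_{N-1} = \tau(p_1) \int_{\R_+} f_N(t,\cdot)\, \overline{h_1(t)}\, dt \in L^2(\R_+^{N-1}).$$

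Applying the induction hypothesis to $\tilde f$ with the sequences $(p_2,\ldots,p_N)$ and $(h_2,\ldots,h_N)$ then yields
$$\Delta_{p_N,h_N}\cdots\Delta_{p_2,h_2} I^S(\tilde f) = \tau(p_2)\cdots\tau(p_N)\, \langle \tilde f_{N-1}, h_2 \otimes \cdots \otimes h_N\rangle \cdot 1,$$
and a direct application of Fubini identifies $\langle \tilde f_{N-1}, h_2\otimes\cdots\otimes h_N\rangle_{L^2(\R_+^{N-1})}$ with $\tau(p_1)\, \langle f_N, h_1\otimes h_2\otimes\cdots\otimes h_N\rangle_{L^2(\R_+^N)}$, closing the induction. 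I do not anticipate any real difficulty: the fact that $\Delta_{p,h}$ is not cleanly degree-lowering by exactly one (it spills into all strictly lower chaos orders) is precisely what makes the induction robust, since subsequent applications cannot reach the chaos of order zero from anything other than the leading coefficient $f_N$ contracted against the $h_j$'s one variable at a time, which is exactly the content of the right-hand side.
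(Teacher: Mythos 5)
Your proof is correct and follows essentially the same route as the paper: both arguments consist of repeatedly applying Lemma \ref{lem:Delta-formula} and observing that the chaos component of top order $N-m$ after $m$ steps is fed only by the $k=1$ term acting on the previous top component, yielding $\tau(p_m)\int f^{(m-1)}_{N-m+1}(t_m,\cdot)\,\overline{h_m(t_m)}\,dt_m$. The paper phrases this as an explicit $N$-step iteration while you package it as an induction on $N$ (using that the inductive claim depends only on the leading coefficient), but the content is identical.
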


Before continuing with the proof of the general statement, we first focus on the special case of simple functions.

\begin{remark}
We note that for any $Y\in\S_\alg$
\begin{align*}
\Delta_{p_N,h_N} \cdots \Delta_{p_1,h_1} Y = (\tau^{\otimes N} \otimes \id) \big(p_1 \otimes \cdots \otimes p_N \otimes 1\, (\overline{\nabla}^{h_1,\dots,h_N} Y)\big)
\end{align*}
holds, where the \emph{iterated gradient} $\overline{\nabla}^{h_1,\dots,h_N}: \S_\alg \to \S_\alg^{\odot (N+1)}$ is defined by
$$\overline{\nabla}^{h_1,\dots,h_N} := (\id^{\otimes (N-1)} \otimes \overline{\nabla}^{h_N}) \dots (\id \otimes \overline{\nabla}^{h_2}) \overline{\nabla}^{h_1}.$$
Thus, the statement of Proposition \ref{prop:coefficient} becomes apparent in the case where $f=(f_n)_{n=0}^\infty\in\F_\fin$ consists of simple functions $f_n\in\E(\R_+^n)$. Indeed, since $I^S(f)$ decomposes by the conditions that are imposed on $f$ as
$$I^S(f) = I_0^S(f_0) + I_1^S(f_1) + \dots + I_N^S(f_N)$$
and because obviously
$$\overline{\nabla}^{h_1,\dots,h_N} I_k^S(f_k) = 0 \qquad\text{for $0\leq k \leq N-1$},$$
we see that
$$\overline{\nabla}^{h_1,\dots,h_N} I^S(f) = \overline{\nabla}^{h_1,\dots,h_N} I^S_N(f_N).$$
By using Proposition \ref{prop:gradient}, we get
$$\overline{\nabla}^{h_1,\dots,h_N} I_N^S(f_N) = \langle f, h_1 \otimes \dots \otimes h_N\rangle\, 1^{\otimes (N+1)}.$$
Combining these observations yields
$$\Delta_{p_N,h_N} \cdots \Delta_{p_1,h_1} I^S(f) = \tau(p_1) \cdots\tau(p_N)\, \langle f_N, h_1 \otimes \dots \otimes h_N\rangle\, 1,$$
which is the stated formula.
\end{remark}

\begin{proof}[Proof of Proposition \ref{prop:coefficient}]
For the general case, we make use of Lemma \ref{lem:Delta-formula}. Applying formula \eqref{eq:Delta-formula} iteratively, yields that for $1\leq m\leq N$
$$\Delta_{p_m,h_m} \dots \Delta_{p_1,h_1} I^S(f) = I^S(f^{(m)}),$$
for some $f^{(m)}\in\F_\fin$, where $f^{(m)}_n = 0$ for all $n \geq N-m+1$. Moreover, if we put $f^{(0)} := f$, we see that
$$f^{(m)}_{N-m}(t_{m+1},\dots,t_N) = \tau(p_m) \int_{\R_+} f^{(m-1)}_{N-m+1}(t_m,t_{m+1},\dots,t_N)\, \overline{h_m(t_m)}\, dt_m$$
for all $1\leq m \leq N-1$ and
$$f^{(N)}_0 = \tau(p_N) \int_{\R_+} f^{(N-1)}_1(t_N)\, \overline{h_N(t_N)}\, dt_N.$$
Hence, the only term that survives in $\Delta_{p_N,h_N} \dots \Delta_{p_1,h_1} I^S(f)$ is induced by
$$f^{(N)}_0 = \tau(p_1) \cdots\tau(p_N)\, \langle f_N, h_1 \otimes \dots \otimes h_N\rangle,$$
which gives the stated formula.
\end{proof}

\subsection{Absence of zero divisors}

Our discussion in the previous subsections has shown that directional gradients allow us to transfer tools from the theory of non-commutative derivations as presented in Section \ref{sec:Derivations} to the setting of free stochastic calculus. Moreover, we have convinced ourselves that directional gradients $\nabla^h$ induce operators $\Delta_{p,h}$, which satisfy the general conditions for performing our reduction method.

Putting things together, we obtain the following theorem, of which the desired Theorem \ref{MainThm} will be a corollary.

\begin{theorem}\label{thm:no-zero-divisors}
There are no zero divisors in $\S_\fin$. More precisely, if $0 \neq Y\in\S_\fin$ is given, then there is no $0\neq u\in\S$ such that $Yu=0$.
\end{theorem}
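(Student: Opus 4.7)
The plan is to argue by contradiction: assume $0 \neq Y \in \S_\fin$ and $0 \neq u \in \S$ with $Yu = 0$, then iteratively apply the reduction mechanism of Proposition \ref{prop:reduction} to descend through the chaos grading of $\S_\fin$, arriving after $N$ steps at a scalar element that is simultaneously forced to vanish (by the persistence of a zero-divisor property) and forced to detect the top chaos of $Y$ (by Proposition \ref{prop:coefficient}), contradicting $f_N \neq 0$. As a first move, I would replace $u$ by a right-annihilating projection and produce a left-annihilating projection: writing $Y = \sum_{n=0}^{N} I_n^S(f_n)$ with $f_N \neq 0$, the kernel projection $p := p_{\ker Y} \in \S$ is nonzero (since $u \in \ker Y \setminus \{0\}$) and satisfies $Yp = 0$; and since the trace property yields $\tau((Y^\ast Y)^k) = \tau((YY^\ast)^k)$ for every $k$, the operators $Y^\ast Y$ and $YY^\ast$ have the same spectral distribution, so $\tau(p_{\ker Y^\ast}) = \tau(p_{\ker Y}) > 0$. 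This produces a nonzero projection $q := p_{\ker Y^\ast} \in \S$ with $Y^\ast q = 0$.

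Next I would set up the iteration. Put $Y_0 := Y$, $q_0 := q$. Given $Y_k \in \S_\fin$ together with the fixed right annihilator $p$ (i.e.\ $Y_k p = 0$) and some nonzero projection $q_k \in \S$ with $Y_k^\ast q_k = 0$, I pick an arbitrary $h_{k+1} \in L^2(\R_+,\R)$ and apply Proposition \ref{prop:reduction} with $u = p$ and $v = q_k$ to conclude
\begin{equation*}
(q_k \otimes 1) \cdot (\overline{\nabla}^{h_{k+1}} Y_k) \cdot (1 \otimes p) = 0 \qquad\text{in } L^2(\S,\tau) \otimes L^2(\S,\tau).
\end{equation*}
Applying $\tau \otimes \id$ to this identity and using that right-multiplication by $p$ on the second tensor leg commutes with the contraction on the first leg, I obtain $(\Delta_{q_k, h_{k+1}} Y_k) \cdot p = 0$. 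Defining $Y_{k+1} := \Delta_{q_k, h_{k+1}} Y_k \in \S_\fin$, this reads $Y_{k+1} p = 0$; hence $Y_{k+1}$ is still a zero divisor, and the tracial argument of the first paragraph yields a nonzero projection $q_{k+1} := p_{\ker Y_{k+1}^\ast}$, closing the induction.

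By Lemma \ref{lem:Delta-formula} each application of $\Delta_{q_k, h_{k+1}}$ strictly lowers the top chaos order, so $Y_N$ lies in the chaos of order zero, i.e.\ $Y_N \in \C\cdot 1$. Proposition \ref{prop:coefficient} then identifies it explicitly as
\begin{equation*}
Y_N = \tau(q_0)\,\tau(q_1)\cdots\tau(q_{N-1}) \, \langle f_N,\, h_1 \otimes h_2 \otimes \cdots \otimes h_N\rangle \cdot 1.
\end{equation*}
Since $Y_N p = 0$ with $p \neq 0$, the scalar $Y_N$ must vanish; faithfulness of $\tau$ forces $\tau(q_k) > 0$ for all $k$, so $\langle f_N,\, h_1 \otimes \cdots \otimes h_N\rangle = 0$ for every choice of $h_1, \dots, h_N \in L^2(\R_+,\R)$. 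Because the $\R$-linear span of elementary real tensors is dense in $L^2(\R_+^N,\R)$, and decomposing $f_N = \Re f_N + i \Im f_N$ with real and imaginary parts in $L^2(\R_+^N,\R)$ both of which are then orthogonal to $L^2(\R_+^N,\R)$, we conclude $f_N = 0$, contradicting the choice of $N$.

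The main obstacle I foresee is the careful tracking of the fixed right annihilator $p$ through the successive applications of $\Delta_{q_k, h_{k+1}}$: the intermediate left annihilators $q_k$ must be freshly chosen at every step and depend on the accumulated data $h_1,\dots,h_k$, so the only thing held constant throughout the iteration is $p$. The decisive point is that Proposition \ref{prop:reduction} outputs an identity in the tensor product, while continuing the induction requires an identity in $\S$ itself; this gap is bridged precisely because the contraction $\tau\otimes\id$ defining $\Delta_{q_k, h_{k+1}}$ acts as the identity on the second tensor leg, so right-multiplication by $p$ survives. The restriction $h \in L^2(\R_+,\R)$ (needed for $\nabla^h$ to be a real derivation in Corollary \ref{cor:directional-gradient-derivation}) is then only a minor nuisance, cleared by the real-to-complex density argument in the final paragraph.
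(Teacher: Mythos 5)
Your proposal is correct and follows essentially the same route as the paper: iterate Proposition \ref{prop:reduction} through the operators $\Delta_{p,h}$, producing at each step a fresh non-zero left-annihilating projection while the right annihilator persists, then use Proposition \ref{prop:coefficient} and faithfulness of $\tau$ to force $\langle f_N, h_1\otimes\cdots\otimes h_N\rangle=0$ for all real $h_i$, and conclude $f_N=0$ by density. Your only deviations are cosmetic -- you fix the right annihilator as the kernel projection of $Y$ and obtain the left projection from equality of the distributions of $Y^\ast Y$ and $YY^\ast$, where the paper keeps the original $u$ and cites Murray--von Neumann equivalence of support projections -- and your explicit real/imaginary splitting of $f_N$ at the end is a harmless elaboration of the paper's density argument.
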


\begin{proof}
Contrarily, assume that there are $0 \neq Y\in\S_\fin$ and $0\neq u\in\S$ such that $Yu=0$. We may write $Y=I^S(f)$ for some $f\in\F_\fin$ of the form $f=(f_n)_{n=0}^\infty$. Moreover, we may choose $N\in\N$ such that $f_N \neq 0$ but $f_n = 0$ for all $n\geq N+1$.

Now, we fix arbitrary functions $h_1,\dots,h_N\in L^2(\R_+,\R)$. Recall that whenever we have an element $X\in\S$ such that $Xu=0$ holds, then there exists (since we assumed that $u\neq0$) a non-zero projection $p\in\S$ such that $X^\ast p=0$. This is in fact an easy consequence of the Murray-von Neumann equivalence of the left and right support projections of $X$; see also \cite[Lemma 3.14]{MaiSpeicherWeber2015}. Thus, by applying Proposition \ref{prop:reduction} iteratively, we may find non-zero projections $p_1,\dots,p_N\in\S$ such that
$$(\Delta_{p_N,h_N} \dots \Delta_{p_1,h_1} Y) u = 0.$$
According to Proposition \ref{prop:coefficient}, this means that
$$\tau(p_1) \cdots \tau(p_N)\, \langle f_N, h_1 \otimes \cdots \otimes h_N\rangle\, u = 0.$$
Since we have by assumption $u\neq 0$ and furthermore $\tau(p_1) \cdots \tau(p_N) \neq 0$, because $p_1,\dots,p_N$ are non-zero projections, it follows
$$\langle f_N, h_1 \otimes \cdots \otimes h_N\rangle = 0.$$

Inasmuch as the linear span of
$$\{h_1 \otimes \cdots \otimes h_N|\ h_1,\dots,h_N \in L^2(\R_+,\R)\}$$
is dense in $L^2(\R_+^N)$ with respect to $\|\cdot\|_{L^2(\R_+^N)}$, the previous insight yields $f_N=0$, which contradicts the condition according to which $N$ was chosen. Thus, the assumption made above was wrong, so that the statement of the theorem must be true.
\end{proof}

We finish by showing that Theorem \ref{MainThm} is indeed a consequence of Theorem \ref{thm:no-zero-divisors} above. In fact, we will deduce Theorem \ref{MainThm} exactly in the same way as it was done for the analogous statement in \cite{MaiSpeicherWeber2015}.

\begin{proof}[Proof of Theorem \ref{MainThm}]
More generally, by allowing right from the beginning a constant summand $I^S_0(f_0)$, we show the following: the distribution $\mu_Y$ of any self-adjoint element $Y\in \S_\fin$, which does not belong to the chaos of order zero, cannot have atoms.

Let $Y\in\S_\fin$ be given. If $Y$ does not belong to the chaos of order zero, we can write it as
$$Y = I^S(f) = I^S_0(f_0) + I^S_1(f_1) + \dots + I^S_N(f_N)$$
for some $f=(f_n)_{n=0}^\infty \in \F_\fin$, which is stationary zero after $f_N \neq 0$ for some $N\in\N$. (Note that $N\neq0$ means abstractly speaking that $Y$ is not constant, as it was assumed in \cite{MaiSpeicherWeber2015}.) Then, we observe that any atom $\alpha$ of the distribution $\mu_Y$ of $Y$, i.e. any $\alpha\in\R$ satisfying $\mu_Y(\{\alpha\})\neq 0$, leads by the spectral theorem for bounded self-adjoint operators on Hilbert spaces to a non-zero projection $u$ satisfying $(Y - \alpha 1)u = 0$. Now, Theorem \ref{thm:no-zero-divisors} tells us that $Y = \alpha 1$, which contradicts $f_N\neq 0$.
\end{proof}

\bibliographystyle{amsalpha}
\bibliography{Wigner_integrals}

\end{document}